\newcommand{%
    \def\svgwidth{1\columnwidth}
    \import{./figures/}{.pdf_tex}
}[2][1]{%
    \def\svgwidth{#1\columnwidth}
    \import{./figures/}{#2.pdf_tex}
}
\numberwithin{equation}{section}
\definecolor{citecol}{RGB}{12,127,172}
\definecolor{example}{rgb}{0.6, 0.4, 0.8}
\definecolor{remark}{rgb}{0.0, 0.5, 0.0}
\newcommand{\innerprod}[1]{\ensuremath{\left\langle \right\rangle}}
\DeclareMathOperator{\codim}{codim}
\theoremstyle{definition}
\newtheorem{thm}{Theorem}[section]
\newtheorem{lemma}[thm]{Lemma}
\newtheorem{remark}[thm]{Remark}
\newtheorem{defn}[thm]{Definition}
\newtheorem{prop}[thm]{Proposition}
\newtheorem{corollary}[thm]{Corollary}
\newtheorem{obs}[thm]{Observation}
\newtheorem{thmA}{Theorem}
\begin{document}
\allowdisplaybreaks
\title[Cut Locus of Finsler Submanifold]{On the Cut Locus of Submanifolds of a Finsler Manifold}

\author[A. Bhowmick]{Aritra Bhowmick}
\address{Department of Mathematics and Statistics, IISER Kolkata \\ West Bengal, India}
\email{avowmix@gmail.com}

\author[S. Prasad]{Sachchidanand Prasad}
\address{ICTS-TIFR Bangalore, Karnataka, India}
\email{sachchidanand.prasad1729@gmail.com}

\subjclass[2020]{Primary: 53C22, 53B40; Secondary: 53C60}

\keywords{cut locus, conjugate locus, closed geodesics, Finsler geometry, Finsler submanifolds, cone bundles}

\begin{abstract}
    In this article, we investigate the cut locus of closed (not necessarily compact) submanifolds in a forward complete Finsler manifold. We explore the deformation and characterization of the cut locus, extending the results of Basu and the second author (\emph{Algebraic and Geometric Topology}, 2023). Given a submanifold $N$, we consider an $N$-geodesic loop  as an $N$-geodesic starting and ending in $N$, possibly at different points. This class of geodesics were studied by Omori (\emph{Journal of Differential Geometry}, 1968). We obtain a generalization of Klingenberg's lemma for closed geodesics (\emph{Annals of Mathematics}, 1959) for $N$-geodesic loops in the reversible Finsler setting.
 
\end{abstract}

\date{\today}
\maketitle

\setcounter{tocdepth}{3}

\frenchspacing 

\section{Introduction} \label{sec:introduction}
In the field of Riemannian geometry, the study of geodesics plays a fundamental role in comprehending the geometric properties of the underlying space. \emph{Geodesics} are (locally) the shortest paths between two points on a Riemannian manifold, akin to straight lines in Euclidean spaces. One important concept related to geodesics is the \emph{cut locus}. The cut locus of a point (or a submanifold) of a Riemannian manifold is the set of points beyond which a distance minimizing geodesic from the point (or the submanifold) ceases to be distance minimal. Cut locus of a point, a notion introduced by Henri Poincar\'{e} \cite{Poin05}, has been extensively studied (see \cite{Kob67} for a survey as well as \cite{Buc77},  \cite{Mye35}, \cite{Wol79}, and \cite{Sak96}). Prior to Poincar\'{e}, its implicit appearance may be traced back to a paper by Mangoldt \cite{Man81}. Other articles such as \cite{Whi35} and \cite{Mye35, Mye36} describe the topological properties of the cut locus. The exploration of the cut locus of a Riemannian manifold plays a crucial role in understanding the manifold's underlying geometry, leading to many interesting results in Riemannian geometry. For instance, Rauch's proof of the “Sphere theorem”  \cite{Rau59} heavily relies on estimates of distances to the cut locus. Additionally, it was realized that much of a manifold's topological properties lie in its cut locus. A very good account of these results and methods can be found in the articles \cite{Kli59, Kob67, Wein68}.

Finsler manifolds provide a generalization of Riemannian manifolds by considering a broader class of metrics that capture both the length and the direction of tangent vectors. The study of Finsler geometry has yielded significant insights and applications in various fields, including physics, optimization theory, and robotics. Essentially, a \emph{Finsler manifold} is a manifold $M$ where each tangent space is equipped with a Minkowski norm, which is a norm not necessarily induced by an inner product. In sharp contrast to the Riemannian case, this norm induces canonical inner products that are not parameterized by points of $M$, but by directions in $TM$. Thus, one can think of a Finsler manifold as a space where the inner product does not only depend on where you are but also in which direction you are looking. While a Riemannian manifold is itself trivially a Finsler manifold, where the inner product at a point is the same in all the tangent directions, Finsler geometry represents quite a large step away from the Riemannian realm. Nonetheless, Finsler geometry contains analogs for many of the natural objects in Riemannian geometry. For example, the length of a curve, geodesics, curvature, connections, and covariant derivatives all generalize; however, normal coordinates do not \cite{Rund59}. In this article, we focus on investigating the cut locus of a submanifold within a Finsler manifold. For a Finsler manifold, the cut locus of a point has been extensively studied, see \cite{Warner1965, Wein68, Hass73, SaTa16}. 

The second author with Basu in \cite{BaPr21} showed that the complement of the cut locus of a submanifold in a Riemannian manifold topologically deforms to $N$. In this article, we prove the same for Finsler manifolds as well. Let $(M, F)$ be a Finsler manifold $M$ with a forward complete Finsler metric $F$, and $N$ be a closed submanifold of $M$. If $N$ is noncompact, we additionally assume that $F$ is backward complete.

\begin{thmA}[\autoref{thm:deformation}]
    If $\mathrm{Cu}(N) $ denotes the cut locus of $N$, then
    \begin{enumerate}
        \item $M\setminus \mathrm{Cu}(N)$ strongly deformation retracts onto $N$, and
        \item $M \setminus N$ strongly deformation retracts onto $\mathrm{Cu}(N)$.
    \end{enumerate}  
\end{thmA}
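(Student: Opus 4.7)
My plan is to build both retractions from one ingredient: the normal exponential map of $N$ together with a continuous cut-time function along unit normal directions. I assume the results of the earlier sections, which should provide a Finsler normal cone bundle $\nu N \subset TM$, its unit sphere bundle $SN$, a well-defined normal exponential map $\exp^\perp : \nu N \to M$, and a cut-time function $\rho : SN \to (0,\infty]$ satisfying: (a) $\rho$ is continuous, (b) $\exp^\perp$ restricts to a homeomorphism from the open star-domain $U = \{tv : v \in SN,\ 0 \le t < \rho(v)\} \cup \{0_x : x \in N\}$ onto $M \setminus \mathrm{Cu}(N)$, and (c) $\mathrm{Cu}(N) = \{\exp^\perp(\rho(v)\,v) : v \in SN,\ \rho(v) < \infty\}$. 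These are the Finsler analogues of the Riemannian facts used in \cite{BaPr21}, and I would expect them to be proved in earlier sections of the paper.

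For part (1), the retraction is a fibrewise radial contraction. Given $p \in M \setminus \mathrm{Cu}(N)$, let $(v(p), t(p))$ be the unique preimage under $\exp^\perp|_U$, taking $t(p) = 0$ when $p \in N$, and set
\begin{equation*}
    H_1(p, s) \;=\; \exp^\perp\bigl((1-s)\, t(p)\, v(p)\bigr), \qquad s \in [0,1].
\end{equation*}
Continuity on $M \setminus \mathrm{Cu}(N)$ follows from the homeomorphism property of $\exp^\perp$ on $U$; we have $H_1(p,0) = p$, $H_1(p,1)$ is the basepoint of $v(p)$ in $N$, and points of $N$ are fixed throughout since there $t(p)=0$. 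Hence $H_1$ is a strong deformation retraction onto $N$.

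Part (2) is dual: slide each $q \in M \setminus N$ along its minimizing normal geodesic until it first meets $\mathrm{Cu}(N)$. For $q \in M \setminus (N \cup \mathrm{Cu}(N))$ with $\rho(v(q)) < \infty$, the natural formula is
\begin{equation*}
    H_2(q, s) \;=\; \exp^\perp\Bigl(\bigl((1-s)\, t(q) + s\,\rho(v(q))\bigr)\, v(q)\Bigr),
\end{equation*}
with points of $\mathrm{Cu}(N)$ left fixed. The subtlety is that on rays where $\rho(v) = \infty$ there is no finite cut point, and such rays may occur as limits of rays whose finite cut times diverge. My remedy is to compose the radial coordinate with a homeomorphism $\tau : [0,\infty] \to [0,1]$ such as $\tau(t) = t/(1+t)$ and perform the linear interpolation in $\tau$ rather than in $t$. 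On finite-cut-time rays this only reparametrizes the original homotopy; on infinite-cut-time rays, continuity of $\rho$ forces these rays to be isolated from the cut locus in a controlled way, so the reparametrized $H_2$ remains continuous, fixes $\mathrm{Cu}(N)$ pointwise, and maps $M \setminus N$ into $\mathrm{Cu}(N)$ at $s = 1$. Since $t(q) > 0$ for $q \notin N$ and the radial parameter only increases, the homotopy never re-enters $N$.

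The main obstacle is verifying continuity of $H_2$ across the boundary between rays of finite and infinite cut time, and more generally at points where the minimizing geodesic is unique but its cut time is close to the values of $\rho$ on neighbouring directions. Both reduce to continuity of $\rho$ and to the homeomorphism property of $\exp^\perp$ on the closed star-domain $\overline{U}$. In the asymmetric Finsler setting these facts are more delicate than in the Riemannian case, because the Finsler distance is not symmetric and the normal bundle is only a cone; once they are available, however, the retraction formulas above carry over from \cite{BaPr21} with only cosmetic changes.
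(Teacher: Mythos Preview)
Your plan for part (1) is exactly the paper's proof: pull back via $(\exp^\nu)^{-1}$ to the star-shaped domain $\mathcal{I}(N)$ and contract linearly. For part (2) your basic formula (linearly interpolate the radial parameter between $t(q)$ and $\rho(v(q))$) is also precisely what the paper writes down, so on the level of strategy there is no difference.

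The gap is in your treatment of rays with $\rho(v)=\infty$. Your reparametrization by $\tau(t)=t/(1+t)$ does not repair anything: on such a ray the interpolated $\tau$-value at $s=1$ is $1$, which corresponds to $t=\infty$, and $\exp^\perp(\infty\cdot v)$ is simply not a point of $M$. There is no target in $\mathrm{Cu}(N)$ for such a point to land on, so $H_2(\,\cdot\,,1)$ cannot take values in $\mathrm{Cu}(N)$ there, regardless of how you reparametrize. Your sentence ``continuity of $\rho$ forces these rays to be isolated from the cut locus in a controlled way, so the reparametrized $H_2$ \ldots\ maps $M\setminus N$ into $\mathrm{Cu}(N)$ at $s=1$'' is therefore false as written. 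In fact statement (2) can fail outright when $\rho$ is not everywhere finite: on the flat cylinder $M=S^1\times\mathbb{R}$ with $N$ a single point, $\mathrm{Cu}(N)$ is the antipodal line $\{q\}\times\mathbb{R}\simeq\mathbb{R}$, which is contractible, while $M\setminus N$ is a twice-punctured plane and hence has the homotopy type of $S^1\vee S^1$; no deformation retraction exists.

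The paper's own formula for $H$ in part (2) uses the same direct interpolation $s\,\rho(\mathbf{v}/F(\mathbf{v}))+(1-s)F(\mathbf{v})$ and does not address the case $\rho=\infty$ either; by the paper's own \autoref{obs:compactnessAndTangentCutLocus}, finiteness of $\rho$ is equivalent to compactness of $M$, so you should read (2) under that hypothesis. Once $\rho<\infty$ everywhere, your proposal (without the $\tau$-reparametrization) coincides with the paper's proof verbatim, and your worries about continuity at the boundary reduce to the continuity of $\rho$ (\autoref{thm:rhoContinuous}) and the diffeomorphism statement of \autoref{thm:normalExponetnialDiffeo}, exactly as the paper invokes them.
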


Along with the above result, we give two characterizations of the cut locus. The first one is in terms of the focal locus (\autoref{thm:cutPointClassification}), and the second one is in terms of separating set (\autoref{thm:separatingSetDense}). Recall that the \emph{focal locus} is the collection of points where the normal exponential map fails to be of full rank (\autoref{defn:focalLocus}), whereas the \emph{separating set} of a submanifold is the collection of all points which can be joined by more than one minimizing geodesic from the submanifold (\autoref{defn:separtingSet}). The second characterization is particularly important in calculations, as shown in \cite{Pra23}. Let $\mathrm{Se}(N)$ denote the separating set of $N$.

\begin{thmA}[\autoref{thm:cutPointClassification}]
    Let $N$ be a closed submanifold of a Finsler manifold $(M, F)$. Then, $q \in \mathrm{Cu}(N)$ if and only if $\gamma : [0, \ell] \to M$ is an $N$-segment with $q = \gamma(\ell)$ and at least one of the following holds true.
    \begin{enumerate}
        \item There exist two distinct $N$-segments joining $N$ to $q$, i.e., $q \in \mathrm{Se}(N)$, or
        \item $q$ is the first focal point of $N$ along $\gamma$.
    \end{enumerate}
\end{thmA}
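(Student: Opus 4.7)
The plan is to prove the two implications separately. For the \emph{sufficiency} direction, suppose (1) holds: two distinct $N$-segments $\gamma_1, \gamma_2$ end at $q$. Extending $\gamma_1$ slightly past $q$ to $\gamma_1(\ell + \epsilon)$, the concatenation $\gamma_2 \ast \gamma_1|_{[\ell, \ell+\epsilon]}$ is a broken $N$-curve of length $\ell + \epsilon$ with a genuine corner at $q$ (the tangent vectors of $\gamma_1$ and $\gamma_2$ at $q$ are distinct). The standard corner-rounding argument, which transfers to Finsler manifolds via the first variation formula for the arc-length functional, produces a strictly shorter $N$-curve ending at $\gamma_1(\ell+\epsilon)$, so $\gamma_1$ is not $N$-minimizing past $q$. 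Suppose instead that (2) holds, i.e., $q$ is a focal point of $N$ along $\gamma$. Then there is a nontrivial $N$-Jacobi field along $\gamma$ vanishing at $q$; feeding the associated variation (with initial point sliding on $N$ and final point fixed at $\gamma(\ell+\epsilon)$) into the Finsler index form forces the second variation of length to be non-positive, yielding a shorter nearby curve and hence $\gamma$ fails to be $N$-minimizing beyond $q$.

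For the \emph{necessity} direction, assume $q \in \mathrm{Cu}(N)$ with $\gamma$ as in the statement. Choose $t_n \downarrow \ell$, set $q_n = \gamma(t_n)$, and note that since $\gamma|_{[0, t_n]}$ is not $N$-minimizing, there exists an arc-length parameterized $N$-segment $\sigma_n : [0, s_n] \to M$ with $\sigma_n(s_n) = q_n$ and $s_n = d(N, q_n) < t_n$. Closedness of $N$ together with the bound $s_n < t_n \to \ell$ confines $\sigma_n(0)$ to a compact subset $K \subset N$. Compactness of the unit normal sphere bundle over $K$ (defined via $g_v$-orthogonality with $F(v) = 1$) then yields a subsequence along which $\sigma_n'(0) \to u$ for some $u \in \nu N$ with $F(u) = 1$ and $s_n \to \ell$. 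Continuity of the Finsler geodesic flow now implies $\sigma_n \to \sigma$, where $\sigma(t) = \exp^\perp(t u)$ is an $N$-segment ending at $q$ of length $\ell = d(N, q)$.

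Two cases arise. If $u \neq \gamma'(0)$ in $\nu N$, then $\sigma$ and $\gamma$ are two distinct $N$-segments to $q$, so $q \in \mathrm{Se}(N)$, giving (1). If instead $u = \gamma'(0)$, the two vectors $t_n \gamma'(0)$ and $s_n \sigma_n'(0)$ in $\nu N$ are distinct (since $s_n < t_n$ while both $\gamma'(0)$ and $\sigma_n'(0)$ have $F$-norm one), both converge to $\ell \gamma'(0)$, and both are mapped by $\exp^\perp$ to $q_n$. Hence $\exp^\perp$ fails to be injective on every neighborhood of $\ell \gamma'(0)$, so by the inverse function theorem its differential there is singular, meaning $q$ is a focal point of $N$ along $\gamma$. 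It must be the \emph{first} focal point along $\gamma$, for an earlier focal point would, by Case (2) of the sufficiency direction, already destroy the $N$-minimality of $\gamma|_{[0, \ell]}$.

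The main obstacle is verifying that the Riemannian toolbox transfers cleanly to the Finsler setting: the corner-rounding (first variation), the index form computation (second variation), and the identification of focal points with singularities of $\exp^\perp$ all depend on the $g_v$-geometry and the direction-dependent normal bundle $\nu N$, which must be wielded carefully due to the possible asymmetry of $F$. I expect that these background ingredients will have been set up earlier in the paper, so the substantive new content is the compactness-and-subsequence argument driving the necessity direction.
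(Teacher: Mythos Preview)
Your proposal is correct and follows essentially the same approach as the paper's proof: the sufficiency direction invokes the corner-cutting argument (the paper's \autoref{prop:separtingSetContainedInCutLocus}) and the index-form argument past a focal point (the paper's \autoref{lemma:beyondFocalPoint}), while the necessity direction runs the identical compactness-and-subsequence scheme on $N$-segments $\sigma_n$ to points $q_n = \gamma(t_n)$ just beyond $q$, splitting on whether the limiting initial velocity $u$ equals $\dot\gamma(0)$. One small point where your write-up is actually slightly more careful than the paper's: you localize the compactness argument to the unit normal sphere over a compact subset $K \subset N$ (using $s_n < t_n \to \ell$ to bound the footpoints), whereas the paper's proof tacitly invokes compactness of all of $S(\nu)$, which strictly requires $N$ compact rather than merely closed.
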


\begin{thmA}[\autoref{thm:separatingSetDense}]
    Let $N$ be a closed submanifold of a Finsler manifold $(M, F)$. Then, $\mathrm{Cu}(N) = \overline{ \mathrm{Se}(N) }$.
\end{thmA}

In Finsler manifolds, geometric objects possess a two-sided nature, exhibiting both forward and backward characteristics. This duality arises due to the inherent asymmetry of the distance function in Finsler geometry. Consequently, Klingenberg's celebrated result on closed geodesics loops in Riemannian geometry \cite{Kli59}, does not generalize directly to the Finsler setup, and one needs to look at \emph{reversible} Finsler metrics, which are symmetric in nature. For submanifolds, geodesics emanating from them and also ending in them, possibly at a point different from the origin, have been studied by Omori in \cite{Omo68}. We call them $N$-geodesic loops (\autoref{defn:NGeodesicLoop}). Generalizing the results from \cite{Innami2012} and \cite{Xu2015}, we have the following, which is new even for the Riemannian setup.

\begin{thmA}[\autoref{thm:NGeodesicLoop}]
    Let $N$ be a closed submanifold of a reversible Finsler manifold $(M, F)$. Suppose $x_0 \in \mathrm{Cu}(N)$ is a global minima of the distance function from $N$ on $\mathrm{Cu}(N)$. Then, either $x_0$ is a focal point of $N$, or we have a unique $N$-geodesic loop, crossing the cut locus exactly at $x_0$, at the midpoint.
\end{thmA}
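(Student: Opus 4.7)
The plan is to argue by contradiction, assuming that $x_0$ is \emph{not} a focal point of $N$ and producing the advertised geodesic as the concatenation of two $N$-segments ending at $x_0$. Under this non-focal hypothesis, \autoref{thm:cutPointClassification} forces $x_0 \in \mathrm{Se}(N)$, so there are two distinct unit-speed $N$-segments $\gamma_1, \gamma_2 : [0, \ell] \to M$ with $\gamma_i(\ell) = x_0$, where $\ell \defeq d_N(x_0)$. Write $v_i \defeq \dot\gamma_i(\ell)$. Two consequences of reversibility will be used repeatedly: the reversed curve $t \mapsto \gamma_2(\ell - t)$ is again a geodesic, and a short computation yields $g_{-v} = g_v$, so $-v$ is $F$-orthogonal to $T_pN$ whenever $v$ is. Consequently, the piecewise-smooth curve
\[
    \Gamma(t) \defeq \begin{cases} \gamma_1(t), & t \in [0, \ell], \\ \gamma_2(2\ell - t), & t \in [\ell, 2\ell], \end{cases}
\]
is a broken geodesic starting and ending $F$-orthogonally on $N$, meeting $x_0$ at its arc-length midpoint. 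The theorem will be proved once I show the velocity match $v_1 = -v_2$, for this upgrades $\Gamma$ to a smooth geodesic whose interior is free of cut points, since each $\gamma_i$ is minimizing on $[0, \ell]$ and non-focal throughout $(0, \ell)$ (invoking \autoref{thm:cutPointClassification} again).

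The key step is to force the velocity match via the local minimality of $d_N$ on $\mathrm{Cu}(N)$. Non-focality makes the normal exponential map a local diffeomorphism near each $(\gamma_i(0), \ell v_i)$, yielding, for $x$ in a neighbourhood $U$ of $x_0$, a smoothly varying geodesic $\gamma_{i, x}$ from $N$ to $x$ close to $\gamma_i$ and meeting $N$ $F$-orthogonally. The first variation of arc length, with the boundary term at $N$ vanishing by $F$-orthogonality, gives
\[
    dL(\gamma_{i, \cdot})_{x_0}(w) \;=\; g_{v_i}(v_i, w) \;=\; \langle \mathcal{L}(v_i), w \rangle, \qquad w \in T_{x_0}M,
\]
where $\mathcal{L}$ is the Finsler Legendre transform. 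Suppose, toward a contradiction, that $v_1 \ne -v_2$. Since $v_1 \ne v_2$ also (backward uniqueness of geodesics) and since $\mathcal{L}$ sends the $F$-unit sphere injectively to the $F^*$-unit sphere with $\mathcal{L}(-v) = -\mathcal{L}(v)$, the covector $\alpha \defeq \mathcal{L}(v_1) - \mathcal{L}(v_2)$ is nonzero, and $\mathcal{L}(v_1)$ is not a scalar multiple of $\alpha$ (such a proportionality would force $v_2 = \pm v_1$). Therefore $\mathcal{L}(v_1)|_{\ker \alpha}$ is a nontrivial linear form, and I can pick $w \in \ker \alpha$ with $\mathcal{L}(v_1)(w) = \mathcal{L}(v_2)(w) < 0$.

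Applying the implicit function theorem to $L(\gamma_{1, \cdot}) - L(\gamma_{2, \cdot})$ makes the level set $S \defeq \{x \in U : L(\gamma_{1, x}) = L(\gamma_{2, x})\}$ a smooth hypersurface through $x_0$ with tangent space $\ker \alpha$. Along $S$, the common length $\phi(x) \defeq L(\gamma_{1, x})$ is smooth with $\phi(x_0) = \ell$ and $d\phi_{x_0}(w) = \langle \mathcal{L}(v_1), w\rangle < 0$, so there is a smooth curve in $S$ through $x_0$ along which $\phi$ strictly decreases. For $y$ on this curve and close enough to $x_0$, the two geodesics $\gamma_{1, y}, \gamma_{2, y}$ are distinct, meet $N$ $F$-orthogonally, and have equal length $\phi(y) < \ell$. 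A standard Arzel\`a--Ascoli argument shows that every minimizing geodesic from $N$ to a point near $x_0$ is a perturbation of some $N$-segment at $x_0$; combined with the uniqueness of non-focal perturbations, this shows that for $y$ sufficiently close, $\gamma_{1, y}$ and $\gamma_{2, y}$ are both minimizing. Hence $y \in \mathrm{Se}(N) \subseteq \mathrm{Cu}(N)$ with $d_N(y) < \ell = d_N(x_0)$, contradicting the local minimality assumption at $x_0$.

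\smallskip
\noindent \emph{Main obstacle.} The delicate point is the last claim, that the perturbed $y \in S$ is genuinely a cut point rather than merely a crossing of two non-minimizing geodesics. A third $N$-segment $\gamma_3$ at $x_0$ could in principle spawn a shorter competitor $\gamma_{3, y}$, pushing $y$ out of $\mathrm{Cu}(N)$. I plan to handle this by enumerating the $N$-segments at $x_0$ (locally finite in the non-focal case, by the local diffeomorphism property of the normal exponential map) and, if necessary, replacing the initial pair by a pair that minimises $\langle \mathcal{L}(v_j), w \rangle$ along the chosen direction, so the remaining candidates are automatically longer near $y$. The density assertion in \autoref{thm:separatingSetDense} then ensures that the separating points so produced genuinely lie in $\mathrm{Cu}(N)$.
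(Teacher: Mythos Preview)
Your approach differs substantially from the paper's. The paper never argues by first variation: it introduces the auxiliary function $\mathcal{M}_q(x)=d(N,x)+d(x,q)$ for a foot point $q\in N$ of $x_0$, checks (using reversibility) that $x_0$ is a local minimum of $\mathcal{M}_q|_{\mathrm{Cu}(N)}$, and then invokes \autoref{lemma:twoNGeodesicsLocalMinima}, which in turn rests on \autoref{lemma:twoNSegments}. That lemma, via a metric--lifting argument through $(\exp^\nu)^{-1}$ and an Arzel\`a--Ascoli step, shows directly that there are \emph{exactly two} $N$-segments to $x_0$ and that one of them concatenates smoothly with a minimizer from $x_0$ to $q$. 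Thus the multiplicity issue you flag is dissolved up front rather than confronted, and no tangent direction inside $\mathrm{Cu}(N)$ is ever sought.

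Your first-variation route is classical in spirit and is complete when exactly two $N$-segments reach $x_0$, but the ``main obstacle'' you isolate is a genuine gap, not a routine detail. With terminal velocities $v_1,\dots,v_k$, $k\ge 3$, and no antipodal pair, what you actually need is a direction $w$ and a pair $(i,j)$ with
\[
\langle \mathcal{L}(v_i),w\rangle=\langle \mathcal{L}(v_j),w\rangle=\min_{l}\langle \mathcal{L}(v_l),w\rangle<0,
\]
for otherwise the point $y$ you produce on $S_{ij}$ need not lie in $\mathrm{Cu}(N)$: some $\gamma_{l,y}$ may be strictly shorter. Your proposed fix (``replace the initial pair by a pair that minimises $\langle \mathcal{L}(v_j),w\rangle$ along the chosen direction'') is circular as written, since the admissible directions $w\in\ker(\mathcal{L}(v_i)-\mathcal{L}(v_j))$ depend on the pair while the minimising pair depends on $w$. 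The existence of such a $w$ is in fact equivalent to the piecewise-linear concave function $m(w)=\min_l\langle \mathcal{L}(v_l),w\rangle$ taking a negative value on its non-smooth locus; this can be shown to fail only when some pair is antipodal, but that requires a separate convexity argument you have not supplied. Finally, your closing appeal to \autoref{thm:separatingSetDense} is misplaced: the inclusion $\mathrm{Se}(N)\subset\mathrm{Cu}(N)$ is just \autoref{prop:separtingSetContainedInCutLocus}, and density plays no role in your argument.
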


The above result follows from \autoref{lemma:twoNSegments}, which provides a criterion for the existence of exactly two distance minimizing geodesics from a manifold to a cut point and might be of independent interest. We also prove the above theorem only under the assumption that $x_0$ is a \emph{local} minima of the distance function, provided $\mathrm{Cu}(N)$ is compact. As a corollary (\autoref{cor:sphereTheorem}), we obtain a sphere theorem similar to \cite[Lemma 5.2]{InnItoNagShi19}.

\subsection*{Conventions} Throughout this article, all manifolds are assumed to be smooth and without boundary (not necessarily compact). A closed submanifold $N \subset M$ means that $N$ is an embedded submanifold, which is closed in the subspace topology. A Finsler metric $F$ on $M$ is always assumed to be \emph{forward} complete, and not necessarily reversible, unless mentioned otherwise. If $N$ is noncompact, we shall additionally assume that $F$ is backward complete (see \autoref{rmk:distanceNotAchieved}).

\subsection*{Organization of the paper} In \autoref{sec:preliminaries}, we begin by recalling basic definitions from Finsler geometry, including the Finsler metric and the Chern connection. In the next section (\autoref{sec:submanifolds}), we introduce the cut locus and the focal locus of a submanifold, and derive some basic properties. In \autoref{sec:mainResults}, we present our main results including the characterization of the cut locus and the deformation of its complement. In the last part (\autoref{subsec:NGeodesicLoop}) of this section, we address the generalization of Klingenberg's lemma to the Finsler setup.
\section{Preliminaries on Finsler Geometry} \label{sec:preliminaries}
  This section recalls some basic notions about Finsler geometry and cut locus. Primary references for this section are \cite{Shen01, Peter06, Javaloyes2015, Ohta2021}. Throughout this article, we shall use the notation $\widehat{TM} = TM \setminus 0$ to denote the slit tangent bundle of a manifold $M$. Given a bundle $E$ over a manifold, $\Gamma E$ will be considered as the sheaf of sections of $E$, in particular, $X \in \Gamma E$ denotes a \emph{local} section of $E$ with some unspecified open set in $M$ as the domain of definition.
\subsection{Finsler Metric} Finsler metric was first studied by P. Finsler in his dissertation \cite{Fin51}. In his thesis, he studied the variational problems of Finsler metrics. 
\begin{defn}\label{defn:FinslerMetric}
    Let $M$ be a smooth manifold, and $TM$ denotes its tangent bundle. A \textit{Finsler metric} on $M$ is a continuous function $F: TM \to \mathbb{R}$ satisfying the following properties.
    \begin{enumerate}
        \item $F$ is smooth on $\widehat{TM}$;
        \item for any $p\in TM$, the restriction $F_p\coloneqq F\big|_{T_pM}$ is a Minkowski norm, i.e.,
        \begin{enumerate}[(a)]
            \item (Positive $1$-homogeneity) for any $\lambda>0$ and $\mathbf{v}\in T_pM\setminus\{0\}$, we have $F_p(\lambda \mathbf{v})=\lambda F_p(\mathbf{v})$, and 
            \item (Strong Convexity) for all $\mathbf{v}\in T_pM\setminus\{0\}$, the symmetric tensor $g_{\mathbf{v}}$ on $T_pM$, called the \emph{fundamental tensor}, is positive definite, where 
            \begin{displaymath}
                g_{\mathbf{v}}(\mathbf{v}_1,\mathbf{v}_2)\coloneqq \left.\dfrac{1}{2} \dfrac{\partial^2}{\partial s_1 \partial s_2} \right|_{s_1=s_2=0} \left(F_p(\mathbf{v} + s_1\mathbf{v}_1 + s_2\mathbf{v}_2)\right)^2.
            \end{displaymath}
        \end{enumerate}
    \end{enumerate}
    $F$ is called a \emph{pseudo-Finsler} (or, \emph{semi-Finsler}) metric if the fundamental tensor is only nondegenerate. $F$ is \emph{reversible} if $F(-\mathbf{v}) = F(\mathbf{v})$ holds for all $\mathbf{v} \in \widehat{TM}$.
\end{defn}

\begin{remark}\label{rmk:pseudoFinsler}
    Although many of the results in \autoref{sec:preliminaries} and \autoref{sec:submanifolds} remain valid for pseudo-Finsler manifolds, our main results do not directly generalize. Crucially, Hopf-Rinow theorem does not hold for the pseudo-Finsler (or even the pseudo-Riemannian) manifolds, and unit tangent spheres need not be compact in these cases. Thus, we only focus on Finsler manifolds.
\end{remark}

As a consequence of the homogeneity, the fundamental tensor satisfies the useful identity 
\[g_\mathbf{v}(\mathbf{v},\mathbf{v}) = F_p(\mathbf{v})^2, \quad \mathbf{v} \in T_p M \setminus 0.\]
For $\mathbf{v} \in T_p M \setminus 0$, the associated \emph{Cartan tensor} on $T_p M$ is a symmetric $3$-tensor defined as
\[C_{\mathbf{v}}(\mathbf{v}_1,\mathbf{v}_2,\mathbf{v}_3) \coloneqq \left. \frac{1}{4} \frac{\partial^3}{\partial s_1 \partial s_2 \partial s_3} \right|_{s_1=s_2=s_3=0} \left( F_p(\mathbf{v} + s_1 \mathbf{v}_1 + s_2 \mathbf{v}_2 + s_3 \mathbf{v}_3) \right)^2.\]
For each $\mathbf{v}\in T_p M \setminus 0$ and $\mathbf{u}, \mathbf{w}\in T_p M$, we have
\[C_{\mathbf{v}}(\mathbf{v}, \mathbf{u},\mathbf{w}) = C_{\mathbf{v}}(\mathbf{u}, \mathbf{v}, \mathbf{w}) = C_{\mathbf{v}}(\mathbf{u}, \mathbf{w}, \mathbf{v}) = 0.\]
We extend the definition of the fundamental tensor to vector fields as follows: \[g_V(X, Y)(p) \coloneqq g_{V_p}(X_p, Y_p),\]
for any $V\in \Gamma \widehat{TM}$ and $X,Y \in \Gamma TM$ defined near $p \in M$. Similarly, we extend the definition of the Cartan tensor as well.

\subsubsection{Chern Connection} Unlike the Levi-Civita connection in the Riemannian context, we have a family of torsionless connections on a Finsler manifold.

\begin{defn}\label{defn:chernConnection}\cite{Rademacher2004, Javaloyes2014}
    For each $V \in \Gamma \widehat{TM}$, we have a unique affine connection \[\nabla^V: \Gamma TM \otimes \Gamma TM \rightarrow \Gamma TM,\] called the \emph{Chern connection}, satisfying the following conditions for any $X, Y, Z \in \Gamma TM$.
    \begin{itemize}
        \item (Torsion freeness) $\nabla^V_X Y - \nabla^V_Y X = [X, Y]$.
        \item (Almost metric compatibility) $X (g_V(Y,Z)) = g_V(\nabla^V_X Y, Z) + g_V(Y, \nabla^V_X Z) + 2 C_V(\nabla^V_X V, Y, Z)$.
    \end{itemize}
    The associated \emph{curvature tensor} is given by
    \[R^V(X,Y) Z \coloneqq \nabla^V_X \nabla^V_Y Z - \nabla^V_Y \nabla^V_X Z - \nabla^V_{[X,Y]} Z, \quad X,Y,Z \in \Gamma TM,\]
    which is skew-symmetric: $R^V(X,Y) = -R^V(Y,X)$.
\end{defn}

For the closely related notion of \emph{Chern curvature}, we refer to \cite{Javaloyes2020,Javaloyes2021}. Using the Chern connection, we can define a covariant derivative along a curve as well.

\begin{defn}\label{defn:covariantDerivative}\cite{Javaloyes2014}
    Given a curve $\gamma : [a,b] \rightarrow  M$ and $W \in \Gamma \gamma^*\widehat{TM}$, the \emph{covariant derivative} along $\gamma$ is defined as \[D^W_\gamma : \Gamma \gamma^* TM \rightarrow \Gamma \gamma^* TM,\]
    which satisfies the following.
    \begin{itemize}
        \item (Linearity) For any $X, Y \in \Gamma \gamma^*TM$ and scalars $\alpha, \beta \in \mathbb{R}$ we have 
        \[D^W_\gamma(\alpha X + \beta Y) = \alpha D^W_\gamma X + \beta D^W_\gamma Y.\]

        \item (Leibniz rule) For any $X \in \Gamma \gamma^*TM$ and a smooth function $f : [a,b] \rightarrow \mathbb{R}$, we have 
        \[D^W_\gamma(f X) = \frac{df}{dt} X + f D^W_\gamma X.\]
        
        \item (Almost metric compatibility) For any $X, Y \in \Gamma \gamma^* TM$, we have \[\frac{d}{dt}g_W(X,Y) = g_W \left( D^W_{\gamma} X, Y \right) + g_W \left( X, D^W_\gamma Y \right) + 2 C_W \left( D^W_\gamma W, X, Y \right).\]
    \end{itemize}
     If $\dot\gamma(t) \ne 0$, then a vector field $X \in \Gamma \gamma^* TM$ is said to be \emph{parallel} with respect to $\gamma$ if $D^{\dot \gamma}_{\gamma} X = 0$.
\end{defn}

\subsubsection{Legendre Transformation} We have a \emph{nonlinear} fiber preserving map $\mathfrak{L} : \widehat{TM} \rightarrow  \widehat{T^*M}$ given by
\[\mathfrak{L}(\mathbf{v})(\mathbf{w}) \coloneqq g_{\mathbf{v}}(\mathbf{v}, \mathbf{w}), \quad \mathbf{v}\in T_p M \setminus 0, \; \mathbf{w} \in T_p M.\]
$\mathfrak{L}$ is called the \emph{Legendre transformation} associated with $(M,F)$. It follows that $\mathfrak{L}$ is a $C^\infty$-diffeomorphism. We extend $\mathfrak{L}$ to all of $TM$ by setting $\mathfrak{L}(0) = 0$. The extension $\mathfrak{L}: TM \rightarrow T^*M$ is then a fiber-preserving \emph{homeo}morphism.

\subsection{Geodesics and Jacobi Fields}
Let us denote the space of piecewise smooth paths $\gamma : [a,b] \rightarrow M$ as $\mathcal{P} = \mathcal{P}([a,b])$. We have two functionals defined on this path space.
\begin{itemize}
    \item (Length Functional) \begin{align*}
        L : \mathcal{P} &\rightarrow \mathbb{R} \\
        \gamma &\mapsto \int_a^b F(\dot \gamma(t)) dt
    \end{align*}

    \item (Energy Functional) \begin{align*}
        E : \mathcal{P} &\rightarrow \mathbb{R} \\
        \gamma &\mapsto \frac{1}{2}\int_a^b F(\dot \gamma(t))^2 dt
    \end{align*}
\end{itemize}
Note that $L$ is independent of the reparametrization of a curve, whereas $E$ is not. We have 
\[L(\gamma) \le \sqrt{(b-a) E(\gamma)},\]
where equality holds if and only if $\gamma$ is of constant speed. A geodesic is defined as a critical point of the energy functional in the variational sense.

Let us recall the notion of variation of a given curve. Given $\gamma \in \mathcal{P}$, we identify the tangent space to $\mathcal{P}$ at $\gamma$ as the infinite dimensional vector space of vector fields along $\gamma$: \[T_{\gamma} \mathcal{P} \coloneqq \Gamma \gamma^* TM.\] Given a vector field $W \in \Gamma \gamma^* TM$, a \emph{$W$-variation} of $\gamma$ is a piecewise smooth map $\Lambda : (-\epsilon, \epsilon) \times [a,b] \rightarrow M$ satisfying \[\Lambda(0,t) = \gamma(t), \quad \left.\frac{\partial}{\partial s}\right|_{s=0} \Lambda(s, t) = W(t),\]
for all $(s,t)$ in the domain. We tacitly assume that each curve $t \mapsto \Lambda_s(t) = \Lambda(s,t)$ has the same breakpoints as $\gamma$. We refer to \cite[Lemma 2.1, pg. 81]{Sak96} for the proof of the existence of $W$-variations.

\begin{defn}\label{defn:properVariation}
    A variation $\Lambda$ of a curve $\gamma$ joining $p = \gamma(a)$ to $q = \gamma(b)$ is said to be \emph{proper} if the endpoints are kept fixed, i.e.,
    \[\Lambda(s,a) = p, \quad \Lambda(s,b) = q,\qquad -\epsilon < s < \epsilon.\]
    Similarly, a vector field $W \in \Gamma \gamma^*TM$ is said to be \emph{proper} if $W(a) = 0 = W(b)$ holds.
\end{defn}

We refer to \cite[Chapter 7]{Ohta2021} for the first and second variation formulas for the Length functional (with respect to proper variations), and as for the energy functional we refer to \cite{Javaloyes2015}.

\begin{defn}\label{defn:geodesic}
    A piecewise $C^1$ curve $\gamma : [a,b] \rightarrow  M$ is called a \emph{geodesic} if it is a critical point of the energy functional with respect to proper variations, i.e., for any proper variation $\Lambda$ of $\gamma$ we have $\left. \frac{d}{ds} \right|_{s=0} E(\Lambda_s) = 0$.
\end{defn}

As usual, a geodesic is locally distance minimizing, where we have the following notion of Finsler distance.
\begin{defn}\label{defn:distanceMetric}
    Given a Finsler manifold $(M,F)$, the \emph{Finsler distance} between points $p, q \in M$ is defined as
    \[d(p, q) = \inf \left\{ L(\gamma) \;\middle|\; \gamma \in \mathcal{P}, \gamma(a) = p, \gamma(b) = q \right\}.\]
\end{defn}

\begin{remark}\label{rmk:pseudometric}
    In general, the distance function is asymmetric, although it induces the same topology as that of the manifold. If $F$ is reversible, then $d$ is a symmetric metric (see, e.g., \cite[pg. 151]{Bao2000}).
\end{remark}

In general, geodesics fail to be globally distance minimizing.
\begin{defn}\label{defn:minimizer}
    A geodesic $\gamma : [0,\ell] \rightarrow  M$ is said to be a \emph{global distance minimizer} (or simply a \emph{minimizer}) if $d(\gamma(0), \gamma(\ell)) = \ell = L(\gamma)$.
\end{defn}

\begin{prop}\label{prop:geodesicEquivalentDefn}
    Given a unit-speed curve $\gamma$ (i.e., $F(\dot\gamma(t)) = 1$ for all $t$), the following are equivalent.
    \begin{itemize}
        \item $\gamma$ is a geodesic.
        \item $\gamma$ is a critical point of the length functional with respect to proper variations.
        \item $\gamma$ is locally distance minimizing, i.e., for any $a < t_0 < b$, there exists an open interval containing $t_0$, say, $t_0 \in (a_{t_0}, b_{t_0}) \subset [a,b]$, such that $d(\gamma(s), \gamma(t)) = t - s$ holds for any $a_{t_0} \le s \le t \le b_{t_0}$.
        \item $\dot\gamma$ is a parallel vector field with respect to $\gamma$, i.e., $D^{\dot \gamma}_{\gamma} \dot \gamma = 0$.
    \end{itemize}
\end{prop}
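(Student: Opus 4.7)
The plan is to establish the four equivalences by proving $(1)\Leftrightarrow (4)$ and $(2)\Leftrightarrow (4)$ via first variation formulas, then closing the cycle by $(4)\Rightarrow (3)\Rightarrow (2)$. Since the reference vector field for the connection in the relevant formulas will be $\dot\gamma$ itself, the identity $C_{\dot\gamma}(\dot\gamma,\cdot,\cdot) = 0$ will handle the ``almost'' in the almost metric compatibility of $\nabla^V$, so the Finsler variation formulas will look formally identical to the Riemannian ones.

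First, for $(1)\Leftrightarrow (4)$, I would take a proper $W$-variation $\Lambda$ of $\gamma$ and compute $\tfrac{d}{ds}\big|_{s=0} E(\Lambda_s)$. Differentiating under the integral, interchanging $\partial_s$ and $\partial_t$ (standard since $\Lambda$ is piecewise smooth), and applying the almost metric compatibility of $\nabla^{\dot\gamma}$ together with the vanishing $C_{\dot\gamma}(\dot\gamma,\cdot,\cdot)=0$, one obtains
\[
\tfrac{d}{ds}\big|_{s=0} E(\Lambda_s) \;=\; -\int_a^b g_{\dot\gamma}\!\bigl(W,\, D^{\dot\gamma}_{\gamma}\dot\gamma\bigr)\,dt
\]
after the boundary terms drop out (proper variation) and the internal break-point contributions cancel (by matching breakpoints). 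Since $g_{\dot\gamma}$ is positive definite on each $T_{\gamma(t)}M$, a standard bump-function argument (the fundamental lemma of the calculus of variations applied piecewise) yields that this vanishes for all proper $W$ if and only if $D^{\dot\gamma}_{\gamma}\dot\gamma \equiv 0$.

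Next, for $(2)\Leftrightarrow (4)$, the same computation for $L(\Lambda_s) = \int_a^b F(\partial_t\Lambda)\,dt$ produces a factor of $F(\dot\gamma)^{-1}$ inside the integrand, giving $-\int g_{\dot\gamma}(W, D^{\dot\gamma}_\gamma(\dot\gamma/F(\dot\gamma)))\,dt$. Under the unit-speed hypothesis $F(\dot\gamma)\equiv 1$ this reduces to exactly the energy expression, so the two critical point conditions coincide. For $(4)\Rightarrow (3)$, I would invoke the Finsler analogue of the Gauss lemma: for any $t_0 \in (a,b)$ there is a normal neighborhood of $\gamma(t_0)$ on which the geodesic spray gives a local diffeomorphism $\exp_{\gamma(t_0)}$ and radial geodesics are $g_{\dot\gamma}$-orthogonal to the distance spheres. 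Writing any competitor curve in polar-like coordinates and estimating $F$ from below by its radial component (using strong convexity of $F_p$ together with the orthogonality) shows that the radial geodesic segment uniquely minimizes length on a sufficiently small $[a_{t_0},b_{t_0}]$, giving the local distance-minimization claim.

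Finally, for $(3)\Rightarrow (2)$, if $\gamma$ is locally distance minimizing and $\Lambda$ is a proper variation of $\gamma\big|_{[a,b]}$, I would cover $[a,b]$ by finitely many of the intervals $(a_{t_0},b_{t_0})$ from condition~(3), subordinate the variation to this cover, and observe that on each such subinterval $L(\Lambda_s\big|_{[a_{t_0},b_{t_0}]}) \ge d(\Lambda_s(a_{t_0}),\Lambda_s(b_{t_0}))$, with equality at $s=0$; differentiating at $s=0$ and summing yields $\tfrac{d}{ds}\big|_{s=0} L(\Lambda_s) = 0$. The step I expect to be the main obstacle is $(4)\Rightarrow (3)$: one has to set up a Finsler Gauss lemma carefully, since the ``orthogonality'' depends on the fundamental tensor $g_{\dot\gamma}$ which itself varies with the direction, and the asymmetry of $F$ means one cannot simply reverse curves. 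The other equivalences are largely bookkeeping once the variation formulas are in hand, with the observation $C_{\dot\gamma}(\dot\gamma,\cdot,\cdot)=0$ absorbing the defect of metric compatibility.
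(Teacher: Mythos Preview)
The paper does not prove this proposition; it simply refers to \cite{Ohta2021}. Your sketch follows the standard route and is essentially sound, so you have supplied what the paper omits.

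Two points deserve tightening. First, in $(1)\Leftrightarrow(4)$ the internal break-point contributions do \emph{not} cancel merely because the variation has matching breakpoints: the first variation of energy for a piecewise $C^1$ curve carries jump terms of the form $\sum_i g_{\dot\gamma}\bigl(W(t_i),\,\dot\gamma(t_i^-)-\dot\gamma(t_i^+)\bigr)$, and the correct argument is that these must vanish for \emph{all} proper $W$, forcing $\dot\gamma$ to be continuous (whence $\gamma$ is smooth by the geodesic ODE), after which the integral term yields $D^{\dot\gamma}_\gamma\dot\gamma=0$. Second, your $(3)\Rightarrow(2)$ step is shaky as written: on each subinterval the endpoints $\Lambda_s(a_{t_0}),\Lambda_s(b_{t_0})$ move with $s$, so the inequality $L(\Lambda_s|_{[a_{t_0},b_{t_0}]})\ge d(\Lambda_s(a_{t_0}),\Lambda_s(b_{t_0}))$ does not by itself give a vanishing derivative after summing. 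The clean route is $(3)\Rightarrow(4)$ directly: on each subinterval where $\gamma$ minimizes length among curves with the same endpoints, it is a critical point of $L$ for proper variations of that subinterval, so the first variation formula there gives $D^{\dot\gamma}_\gamma\dot\gamma=0$; since these subintervals cover $[a,b]$, (4) holds globally. Your identification of the Finsler Gauss lemma as the substantive step in $(4)\Rightarrow(3)$ is accurate; this is precisely where the reference to \cite{Ohta2021} carries weight.
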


For a proof of the above equivalences, we refer to \cite[Chapter 7]{Ohta2021}. Geodesics are precisely the solutions to an initial value problem known as the geodesic equation (e.g., \cite[Equation 3.15]{Ohta2021}). As such, geodesics are always smooth, and given a vector $\mathbf{v} \in T_p M$, we have a unique maximal geodesic $\gamma_{\mathbf{v}} : [0,\ell] \rightarrow  M$ satisfying \[\gamma_{\mathbf{v}}(0) = p, \qquad \dot \gamma_{\mathbf{v}}(0) = \mathbf{v}.\]
Note that, unlike Riemannian geometry, due to the asymmetry of the Finsler metric, the reversed curve $\bar{\gamma}:[0, \ell] \rightarrow M$ defined by $\bar{\gamma}(t) = \gamma(\ell - t)$ need not be a geodesic. But $\bar{\gamma}$ is a geodesic with initial velocity $-\dot\gamma(\ell)$ for the reverse Finsler metric $\bar{F}$ defined by $\bar{F}(\mathbf{v}) = F(-\mathbf{v})$ for all $\mathbf{v} \in TM$ \cite[Section 2.5]{Ohta2021}. Consequently, if two distinct unit-speed geodesics intersect, they do so with distinct velocities.

A Finsler manifold $(M, F)$ is said to be \emph{forward complete} if for all $\mathbf{v} \in TM$, the geodesic $\gamma_{\mathbf{v}}$ is defined for all time $[0, \infty)$. We say $(M, F)$ is \emph{backward complete} if $(M, \bar{F})$ is forward complete. By the Hopf-Rinow theorem \cite[Theorem 3.21]{Ohta2021}, if a Finsler manifold $(M, F)$ is forward or backward complete, then given any two points $p, q \in M$ there exists a minimizer with respect to $F$, and another (possibly distinct) minimizer with respect to $\bar{F}$ joining $p$ to $q$. As mentioned in the introduction, throughout this article, we shall assume $(M, F)$ to be a forward complete Finsler manifold, unless otherwise stated.

\begin{defn}\label{defn:exponentialMap}
    The \emph{exponential map} \[\exp : TM \rightarrow  M\] at a point $p \in M$ is defined as $\exp_p(\mathbf{v}) = \gamma_{\mathbf{v}}(1)$ for any $\mathbf{v} \in T_p M$, where $\gamma_{\mathbf{v}} : [0,\infty) \rightarrow M$ is the unique geodesic starting at $p$ in the direction of $\mathbf{v}$.
\end{defn}

It follows from the theory of ordinary differential equation that the exponential map is smooth on $\widehat{TM}$, but only $C^1$ on the whole tangent bundle. In fact, the exponential is $C^2$ on $TM$ precisely when the Finsler metric comes from a Riemannian metric, in which case the exponential map is $C^\infty$ on $TM$ as well. We refer to \cite[Section 5.3]{Bao2000} for details.

We have the following easy observation.
\begin{prop}\label{prop:globalMinimizerIsLocalMinimizer}
    If $\gamma : [0, \ell] \rightarrow  M$ is a minimizer, then for any $0 \le a \le b \le \ell$ we have $\gamma|_{[a,b]}$ is a minimizer, and in particular $d(\gamma(a), \gamma(b)) = b - a$.
\end{prop}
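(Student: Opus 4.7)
The plan is to argue by the triangle inequality, which holds for the (possibly asymmetric) Finsler distance simply because concatenation of piecewise smooth curves gives a piecewise smooth curve. First I would note that since $\gamma$ is a geodesic, Proposition~\ref{prop:geodesicEquivalentDefn} (or rather the constant-speed property of geodesics built into the geodesic equation) gives $F(\dot\gamma(t)) \equiv c$ for some $c \ge 0$, and the identity $\ell = L(\gamma) = c \ell$ forces $c = 1$. Consequently, for any $0 \le s \le t \le \ell$,
\[
L(\gamma|_{[s,t]}) \;=\; \int_s^t F(\dot\gamma(\tau))\, d\tau \;=\; t - s,
\]
which immediately gives the upper bound $d(\gamma(s), \gamma(t)) \le t - s$.

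Next I would establish the matching lower bound on $d(\gamma(a),\gamma(b))$. Applying the triangle inequality (valid for the asymmetric distance by concatenation of paths) to the triple $(\gamma(0),\gamma(a),\gamma(b),\gamma(\ell))$,
\[
\ell \;=\; d(\gamma(0),\gamma(\ell)) \;\le\; d(\gamma(0),\gamma(a)) + d(\gamma(a),\gamma(b)) + d(\gamma(b),\gamma(\ell)).
\]
Using the upper bounds from the previous step, the outer two terms are at most $a$ and $\ell - b$, respectively, so
\[
\ell \;\le\; a + d(\gamma(a),\gamma(b)) + (\ell - b),
\]
which rearranges to $d(\gamma(a),\gamma(b)) \ge b - a$. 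Combined with the earlier upper bound this yields $d(\gamma(a),\gamma(b)) = b - a = L(\gamma|_{[a,b]})$, proving $\gamma|_{[a,b]}$ is a minimizer.

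Alternatively, and equivalently, one can argue by contradiction: if $\gamma|_{[a,b]}$ failed to be a minimizer, there would exist a piecewise smooth $\alpha$ from $\gamma(a)$ to $\gamma(b)$ with $L(\alpha) < b - a$; then the concatenation $\gamma|_{[0,a]} * \alpha * \gamma|_{[b,\ell]}$ would be an admissible path from $\gamma(0)$ to $\gamma(\ell)$ with length strictly less than $\ell$, contradicting $d(\gamma(0),\gamma(\ell)) = \ell$. Either formulation is essentially one line once the triangle inequality is in hand, so there is no real obstacle here; the only mildly delicate point worth flagging is that the Finsler distance is only a quasimetric, but the triangle inequality in the order $d(p,r) \le d(p,q) + d(q,r)$ is precisely the version we need and does hold.
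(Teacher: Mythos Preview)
Your argument is correct and is exactly the standard one; the paper itself omits the proof, stating the proposition only as an ``easy observation.'' The one cosmetic point is that the implication $c\ell=\ell\Rightarrow c=1$ tacitly assumes $\ell>0$, but the case $\ell=0$ is trivial, so nothing is missing.
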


Any piecewise smooth curve $\gamma$ joining $p$ to $q$, and satisfying $L(\gamma) = d(p, q)$ is, up to a reparametrization, a minimizing geodesic from $p$ to $q$, and in particular $\gamma$ is smooth. A consequence of the completeness assumption is that any two points $p, q \in M$ can be joined by a minimizer, say, $\gamma : [0, \ell] \rightarrow  M$ so that \[\gamma(0) = p, \qquad  \gamma(\ell) = q, \qquad  d(p,q) = \ell = L(\gamma).\]

\subsubsection{Jacobi Fields}
\begin{defn}\label{defn:geodesicVariation}
    Given a unit-speed geodesic $\gamma : [a,b] \rightarrow M$, a variation $\Lambda : (-\epsilon, \epsilon) \times [a,b] \rightarrow M$ of $\gamma$ is said to be a \emph{geodesic variation}, if for each $-\epsilon < s < \epsilon$, the curve $\Lambda_s : [a,b] \rightarrow M$ is a constant-speed geodesic.
\end{defn}

\begin{defn}\label{defn:JacobiField}
    Let $\gamma : [a,b] \rightarrow M$ be a unit-speed geodesic. A vector field $J \in \Gamma \gamma^* TM$ along $\gamma$ is called a \emph{Jacobi field} of $\gamma$ if there exists a geodesic variation $\Lambda : (-\epsilon, \epsilon) \times [a,b] \rightarrow M$ of $\gamma$ satisfying 
    \[\left. \frac{\partial}{\partial s} \right|_{s=0} \Lambda(s,t) = J(t), \quad t\in [a,b].\]
\end{defn}

In order to characterize Jacobi fields via the usual Jacobi equation, we need to define a certain curvature tensor along a geodesic. In general, the curvature tensor $R^V$ at a point $p$ depends on the value of the vector field $V$ near $p$. Now, given a geodesic $\gamma : [a, b] \rightarrow M$, and for any $U, W \in \Gamma \gamma^*TM$, one can define a tensor field $R^\gamma (\dot\gamma, U) W$ along $\gamma$ as follows (see \cite{Javaloyes2014,Javaloyes2014_Correction}). Firstly one defines the \emph{vertical derivative} of $\nabla$ as 
\[P_V(X, Y, Z) = \left. \frac{\partial}{\partial t}\right|_{t=0} \nabla^{V + t Z}_X Y, \quad X, Y, Z \in \Gamma TM, \; V \in \Gamma \widehat{TM}.\]
Then, following \cite[Theorem 2.2]{Javaloyes2014_Correction} one defines 
\[R^\gamma\left( \dot \gamma, U \right) W = \left( R^V \left( V, \tilde{U} \right) \tilde{W} + P_V \left( V, \tilde{W}, [\tilde{U}, V] \right) \right) \circ \gamma, \quad U, W \in \Gamma \gamma^*TM,\]
where $V, \tilde{U}, \tilde{W}$ are arbitrary extensions of $\dot \gamma, U, W$ respectively.

\begin{prop}\label{prop:JacobiEquation}
    Given a unit-speed geodesic $\gamma : [a,b] \rightarrow M$, a vector field $J \in \Gamma \gamma^*TM$ is a Jacobi field if and only if it satisfies the Jacobi equation 
    \[D^{\dot\gamma}_\gamma D^{\dot\gamma}_\gamma J - R^{\gamma}(\dot\gamma, J)\dot\gamma = 0.\]
\end{prop}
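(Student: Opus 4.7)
The plan is to imitate the Riemannian derivation of the Jacobi equation, taking the velocity field of the variation as the reference vector for the Chern connection. The necessity direction will proceed by direct computation on the surface swept out by the variation; the sufficiency direction will produce a geodesic variation realising prescribed initial data and then invoke uniqueness of ODE solutions.

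For the forward implication, let $\Lambda : (-\epsilon,\epsilon) \times [a,b] \to M$ be a geodesic variation of $\gamma$ with variation field $J$. Set $T \defeq \partial_t \Lambda$ and $S \defeq \partial_s \Lambda$. Since each $\Lambda_s$ is a nonconstant geodesic, $T$ is nowhere zero and serves as a reference vector for the Chern connection along the surface, and the geodesic equation reads $D^T_t T = 0$. Combining $[S,T]=0$ with torsion-freeness of the Chern connection yields $D^T_t S = D^T_s T$. Applying $D^T_t$ once more and invoking the curvature commutation along the surface,
\[D^T_t D^T_s T - D^T_s D^T_t T = R^T(T, S) T,\]
together with $D^T_t T = 0$, gives $D^T_t D^T_t S = R^T(T, S) T$. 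Restricting to $s=0$, where $S|_{s=0} = J$ and $T|_{s=0} = \dot\gamma$, produces the Jacobi equation for $J$.

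For the converse, given a solution $J$ of the Jacobi equation, choose a smooth curve $\alpha : (-\epsilon, \epsilon) \to M$ with $\alpha(0) = \gamma(a)$ and $\dot\alpha(0) = J(a)$, together with a vector field $W$ along $\alpha$ satisfying $W(0) = \dot\gamma(a)$ and having $s$-covariant derivative at $s=0$ equal to $D^{\dot\gamma}_\gamma J(a)$. Then $\Lambda(s,t) \defeq \exp_{\alpha(s)}((t-a) W(s))$ is a smooth geodesic variation (smoothness because $W$ remains nonzero on a small neighbourhood of $0$, placing us in the smooth locus of $\exp$), whose variation field $\tilde J$ is a Jacobi field by the necessity half. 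Since $\tilde J$ and $J$ share both initial value and initial covariant derivative, uniqueness for the linear second-order ODE (after passing to a parallel frame along $\gamma$) forces $\tilde J = J$.

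The main obstacle I anticipate is justifying the torsion-free identity $D^T_t S = D^T_s T$ and the curvature commutation cleanly on a surface in the Finsler setting, where the Chern connection depends on a reference vector and metric compatibility is only ``almost''. The crucial observation is that with $T$ taken as the reference along the whole surface, the symmetry of mixed partials still yields the torsion identity, and the commutator $D^T_t D^T_s - D^T_s D^T_t$ produces $R^T(T,S)$ without additional Cartan-tensor corrections, as such corrections are confined to the compatibility with $g_T$. A careful writeup may prefer to phrase the computation on the pullback bundle along $d\Lambda \circ \partial_t$, but the output is identical to the Riemannian case.
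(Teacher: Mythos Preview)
The paper does not actually supply a proof of this proposition: it is recorded in the preliminaries as a standard fact, with the surrounding discussion deferring to the cited references (notably \cite{Javaloyes2015} and \cite{Ohta2021}) for the variational calculus on Finsler manifolds. So there is nothing to compare your argument against in the paper itself.

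Your proposal follows the classical Riemannian template and is essentially the argument one finds in those references. The forward direction is correct once one observes that, with the paper's definition $R^V(X,Y)Z = \nabla^V_X\nabla^V_Y Z - \nabla^V_Y\nabla^V_X Z - \nabla^V_{[X,Y]}Z$ taken for a genuine vector field $V$, the commutator $D^T_t D^T_s - D^T_s D^T_t$ on the variation surface already equals $R^T(T,S)$ with no residual $P$-curvature terms; your caveat about possible Cartan-tensor corrections is therefore unnecessary under this convention. For the converse, the only point worth tightening is the reference vector in the phrase ``$s$-covariant derivative at $s=0$'': since $T|_{t=a} = W$, the condition you need is $D^{W}_\alpha W(0) = D^{\dot\gamma}_\gamma J(a)$, which is easily arranged. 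With that clarification your argument is complete.
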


Since the Jacobi equation is a second order ODE, given the initial data, $\mathbf{u}, \mathbf{v} \in T_{\gamma(a)}M$, there exists a unique Jacobi field $J$ along $\gamma$ satisfying, $J(a) = \mathbf{u}$ and $D^{\dot\gamma}_\gamma J(a) = \mathbf{v}$. In particular, the collection of all Jacobi fields along $\gamma$ forms a vector space of dimension $2 \dim M$.

\begin{prop}\label{prop:conjugateLocusJacobiField}
    Let $\mathbf{v}\in T_p M$ with $F(\mathbf{v}) = 1$, and $\gamma(t) = \exp_p(t\mathbf{v})$ be the geodesic in the direction of $\mathbf{v}$. Then, the kernel of the map 
    \[d(\exp_p)|_{t\mathbf{v}} : T_{t\mathbf{v}}(T_p M) \left( \cong T_p M \right)  \rightarrow T_{\gamma(t)} M\]
    is naturally isomorphic to the vector space 
    \[\left\{ J \;\middle|\; \text{$J$ is a Jacobi field along $\gamma$, satisfying $J(0) = 0 = J(t)$.} \right\}.\]
\end{prop}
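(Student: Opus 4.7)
The plan is to construct an explicit natural isomorphism between the two vector spaces via the variation field of a one-parameter family of geodesics emanating from $p$. This is the classical Riemannian argument, which extends to the Finsler setting because the exponential map is smooth on $\widehat{TM}$ and $t\mathbf{v} \ne 0$.

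First I would, for each $\mathbf{w} \in T_{t\mathbf{v}}(T_p M) \cong T_p M$, define the variation
\[\Lambda^{\mathbf{w}}(s,\tau) \defeq \exp_p\!\left(\tau\!\left(\mathbf{v} + \tfrac{s}{t}\mathbf{w}\right)\right), \qquad (s,\tau) \in (-\epsilon,\epsilon) \times [0,t].\]
For each fixed $s$ close to $0$, the curve $\tau \mapsto \Lambda^{\mathbf{w}}_s(\tau)$ is by construction the unique constant-speed geodesic with initial velocity $\mathbf{v} + \frac{s}{t}\mathbf{w}$, so $\Lambda^{\mathbf{w}}$ is a geodesic variation of $\gamma = \Lambda^{\mathbf{w}}_0$ in the sense of \autoref{defn:geodesicVariation}. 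By \autoref{defn:JacobiField} (or equivalently \autoref{prop:JacobiEquation}), its variation field
\[J_{\mathbf{w}}(\tau) \defeq \left.\tfrac{\partial}{\partial s}\right|_{s=0} \Lambda^{\mathbf{w}}(s,\tau)\]
is a Jacobi field along $\gamma$, and the association $\mathbf{w} \mapsto J_{\mathbf{w}}$ is $\mathbb{R}$-linear because differentiation in $s$ at $s=0$ is linear.

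Next I would read off the boundary values. Since $\Lambda^{\mathbf{w}}(s,0) = p$ for all $s$, one has $J_{\mathbf{w}}(0) = 0$; and by the chain rule applied to $s \mapsto \exp_p(t\mathbf{v} + s\mathbf{w})$,
\[J_{\mathbf{w}}(t) = d(\exp_p)|_{t\mathbf{v}}(\mathbf{w}).\]
Thus $\Phi \colon \mathbf{w} \mapsto J_{\mathbf{w}}$ restricts to a linear map from $\ker\bigl(d(\exp_p)|_{t\mathbf{v}}\bigr)$ into the space of Jacobi fields along $\gamma$ vanishing at both endpoints. To invert it, given any Jacobi field $J$ along $\gamma$ with $J(0)=0$, one uses the symmetry lemma for the Chern connection, applied to the two-parameter map $\Lambda^{\mathbf{w}}$ with reference vector field $\partial_\tau \Lambda^{\mathbf{w}}$ (which is nowhere zero), to identify $D^{\dot\gamma}_{\gamma} J_{\mathbf{w}}(0) = \mathbf{w}/t$. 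Setting $\mathbf{w} \defeq t\, D^{\dot\gamma}_{\gamma} J(0)$ then makes $J$ and $J_{\mathbf{w}}$ Jacobi fields along $\gamma$ with identical initial conditions at $\tau = 0$, so by uniqueness of solutions of the linear second-order ODE in \autoref{prop:JacobiEquation} they coincide. The condition $J(t)=0$ then forces $\mathbf{w} \in \ker\bigl(d(\exp_p)|_{t\mathbf{v}}\bigr)$, yielding surjectivity; injectivity of $\Phi$ follows from the same identification $D^{\dot\gamma}_{\gamma} J_{\mathbf{w}}(0) = \mathbf{w}/t$.

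The main obstacle I anticipate is the bookkeeping around the \emph{symmetry lemma} $D^{V}_{\partial_s}\bigl(\partial_\tau \Lambda^{\mathbf{w}}\bigr) = D^{V}_{\partial_\tau}\bigl(\partial_s \Lambda^{\mathbf{w}}\bigr)$ in the Finsler setting, which underlies both the verification that $J_{\mathbf{w}}$ satisfies the Jacobi equation and the computation $D^{\dot\gamma}_{\gamma} J_{\mathbf{w}}(0) = \mathbf{w}/t$. Unlike the Riemannian case, one must pick the reference vector field $V = \partial_\tau \Lambda^{\mathbf{w}}$ along the variation so that the Chern connection $\nabla^{V}$ is well-defined; this is possible precisely because $\partial_\tau \Lambda^{\mathbf{w}}$ is nowhere zero on a neighborhood of $\{0\} \times [0,t]$, and then torsion-freeness of $\nabla^{V}$ (\autoref{defn:chernConnection}) delivers the symmetry statement exactly as in the Riemannian case.
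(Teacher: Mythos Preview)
Your argument is correct and is the standard one: build the geodesic variation $\Lambda^{\mathbf{w}}(s,\tau) = \exp_p\!\bigl(\tau(\mathbf{v} + \tfrac{s}{t}\mathbf{w})\bigr)$, read off $J_{\mathbf{w}}(0)=0$ and $J_{\mathbf{w}}(t) = d(\exp_p)|_{t\mathbf{v}}(\mathbf{w})$, then use torsion-freeness of the Chern connection with reference vector $\partial_\tau\Lambda^{\mathbf{w}}$ to compute $D^{\dot\gamma}_{\gamma}J_{\mathbf{w}}(0) = \mathbf{w}/t$ and invoke uniqueness for the Jacobi ODE. The only point worth tightening is the evaluation of the symmetry identity at $\tau=0$: there the transverse curve $s\mapsto\Lambda^{\mathbf{w}}(s,0)$ is constant at $p$, so the covariant derivative $D^{V}_{\partial_s}(\partial_\tau\Lambda^{\mathbf{w}})|_{(0,0)}$ reduces to the ordinary derivative $\tfrac{d}{ds}\big|_{s=0}(\mathbf{v}+\tfrac{s}{t}\mathbf{w}) = \mathbf{w}/t$ in $T_pM$, independent of the choice of reference $V$; you allude to this but it deserves one explicit sentence.

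As for comparison: the paper states \autoref{prop:conjugateLocusJacobiField} without proof, treating it as a known result carried over from the Riemannian theory (it is the Finsler analog of, e.g., \cite[Lemma~III.4.8]{Sak96}). So there is no alternative argument in the paper to contrast with yours; your write-up is exactly the proof one would supply if asked to fill the gap.
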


\section{Submanifolds in Finsler Manifolds}\label{sec:submanifolds}
In this section, we generalize the notions from the previous section for submanifolds in a Finsler manifold.
\subsection{Normal Cone Bundles}
Given a submanifold $N$, it is natural to ask for a notion of a normal bundle to it.
\begin{defn}\label{defn:normalCone}
    Given a submanifold $N \subset M$, the set $$\nu_p = \nu_p(N) = \big\{ \mathbf{v} \in T_p M \setminus \left\{ \mathbf{0} \right\} \;\big|\; g_{\mathbf{v}}(\mathbf{v},\mathbf{w}) = 0 \; \forall \mathbf{w} \in T_p N\big\} \cup \left\{ \mathbf{0} \right\}$$
    is called the \emph{normal cone} of $N$ at $p \in N$. The set $\nu = \nu(N) = \cup_{p \in N} \nu_p(N)$ is called the \emph{normal cone bundle} of $N$.
\end{defn}

It should be noted that $\nu(N)$ is \emph{not} a vector bundle in general; in fact, it is a cone bundle. Nevertheless, this is the obvious replacement for the notion of a normal bundle as in the Riemannian setting. We shall denote $\hat{\nu}_p \coloneqq \nu_p \setminus \{ 0 \}$, and $\hat{\nu} \coloneqq \cup_{p\in N} \hat{\nu}_p$ is then the \emph{slit cone bundle}. The Legendre transformation $\mathfrak{L}$ maps $\nu$ homeomorphically onto the annihilator bundle $\nu^* \subset T^*M|_N$ of $TN$ defined as 
\[\nu^*|_p \coloneqq \left\{ \omega \in T_p^*M \;\middle|\; \omega|_{T_p N} = 0 \right\}, \quad p \in N.\] It follows that $\hat{\nu}$ is a smooth submanifold of $TM$ of dimension $\dim N + \codim N = \dim M$, whereas $\nu$ is only a topological submanifold. Note that $\nu$ is a closed subset of $TM$.

\begin{defn}\label{defn:unitConeBundle}
    The \emph{unit cone bundle} of $N$ is denoted as $S(\nu) = \cup_{p \in N} S(\nu_p)$, where \[S(\nu_p) \coloneqq \left\{ \mathbf{v} \in \nu_p \;\middle|\; F_p(\mathbf{v}) = 1 \right\}.\]
\end{defn}

\begin{remark}\label{rmk:unitConeIsCompact}
    If $N$ is a compact submanifold, then $S(\nu)$ is again a compact submanifold of $TM$ of dimension $\dim M - 1$.
\end{remark}

\begin{defn}\label{defn:normalExponentialMap}
    Given $N \subset M$, the \emph{normal exponential map} \[\exp^\nu: \nu(N) \rightarrow  M\] is defined as the restriction of the exponential map to the cone bundle $\nu(N)$.
\end{defn}

As $\hat{\nu}$ is a smooth submanifold of $TM$, the restricted map $\exp^\nu|_{\hat{\nu}}$ is smooth.

\subsection{\texorpdfstring{$N$}{N}-Geodesics and \texorpdfstring{$N$}{N}-Jacobi Fields}
Given a submanifold $N \subset M$, we have the subspace of piecewise smooth paths \[\mathcal{P}_N = \mathcal{P}_N([a,b]) = \{ \gamma \in \mathcal{P}([a,b]) \;|\; \gamma(a) \in N\}\] starting at $N$. For any $\gamma\in\mathcal{P}_N$ we say $\gamma$ is a curve joining $N$ to $\gamma(b)$.
\begin{defn}\label{defn:distanceFromN}
    Given $q \in M$, the distance from $N$ to $q$ is defined as
    \[d(N,q) \coloneqq \inf \left\{ L(\gamma) \;\middle|\; \gamma\in\mathcal{P}_N, \; \gamma(b) = q \right\}.\]
\end{defn}

We identify the tangent space of $\mathcal{P}_N$ at a given $\gamma\in \mathcal{P}_N$ as the infinite dimensional vector space 
\[T_\gamma \mathcal{P}_N \coloneqq \left\{ W \in \Gamma \gamma^*TM \;\middle|\; W(a) \in T_{\gamma(a)} N \right\}.\]
Given $W \in T_\gamma\mathcal{P}_N$, a \emph{$W$-variation} is a piecewise smooth map $\Lambda : (-\epsilon,\epsilon) \times [a,b] \rightarrow  M$ satisfying
\[\Lambda(0, t) = \gamma(t), \quad  \left. \frac{\partial}{\partial s} \right|_{s=0} \Lambda(s, t) = W(t), \quad  \Lambda(s, a) \in N,\]
for all $(s,t)$ in the domain of the definition. The variation is said to be \emph{proper} if it further satisfies $\Lambda(s,b) = \gamma(b)$ for all $-\epsilon < s < \epsilon$. Clearly, proper variations correspond to those $W \in T_\gamma \mathcal{P}_N$ that satisfy $W(b) = 0$. This leads to the first and second variational formulas for the restricted length and energy functionals: $L|_{\mathcal{P}_N}$ and $E|_{\mathcal{P}_N}$. We refer to \cite{Javaloyes2015} for the relevant formulas.

\begin{defn}\label{defn:NGeodesic}
    A piecewise $C^1$ curve $\gamma : [a,b] \rightarrow M$ in $\mathcal{P}(N)$ is called an \emph{$N$-geodesic} if $\gamma$ is a critical point of the restricted energy functional $E|_{\mathcal{P}(N)}$ with respect to proper variations, i.e., for any proper variation $\Lambda$ of $\gamma$ we have $\left. \frac{d}{ds} \right|_{s=0} E(\Lambda_s) = 0$
\end{defn}

\begin{prop}\label{prop:NGeodesicEquivalentDefn}
    Given a unit-speed curve $\gamma : [a,b] \rightarrow M$ in $\mathcal{P}(N)$, the following are equivalent.
    \begin{itemize}
        \item $\gamma$ is an $N$-geodesic
        \item $\gamma$ is a critical point of the restricted length functional $L|_{\mathcal{P}(N)}$ with respect to proper variations.
        \item $\gamma(t) = \exp^\nu(t \mathbf{v})$, where $p = \gamma(0)$ and $\mathbf{v} = \dot\gamma(a) \in S(\nu_p)$.
    \end{itemize}
\end{prop}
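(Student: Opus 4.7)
The plan is to extract from each of the three criteria the same pair of conditions: that $\gamma$ solves the ordinary geodesic equation and that $\dot\gamma(a)$ is $F$-normal to $N$ at $\gamma(a)$. Once this common core is isolated, the equivalence with the third bullet becomes just the statement that such a $\gamma$ is uniquely determined by its initial normal vector, i.e.\ is the $\exp^\nu$-image of a radial ray in $\nu$.

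First I would write down the first variation formula for the restricted energy along a variation $\Lambda$ of $\gamma$ through curves in $\mathcal{P}_N$, with variational field $W \in T_\gamma \mathcal{P}_N$ satisfying $W(b) = 0$. For a unit-speed $\gamma$, this formula (derived from \autoref{defn:chernConnection} and the identities $g_{\dot\gamma}(\dot\gamma,\dot\gamma)=1$, $C_{\dot\gamma}(\dot\gamma,\cdot,\cdot)=0$) takes the form
\[
\left. \frac{d}{ds} \right|_{s=0} E(\Lambda_s)
= - g_{\dot\gamma(a)}\bigl(\dot\gamma(a), W(a)\bigr)
- \int_a^b g_{\dot\gamma}\bigl(D^{\dot\gamma}_\gamma \dot\gamma, W\bigr) \, dt,
\]
with an entirely analogous formula for $L$, since $E = \tfrac12\int F(\dot\gamma)^2 dt$ and $L = \int F(\dot\gamma) dt$ agree up to a factor when $F(\dot\gamma)\equiv 1$. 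A standard bump-function localization, applied first to $W$ compactly supported in the open interval $(a,b)$, forces the interior Euler--Lagrange condition
\[
D^{\dot\gamma}_\gamma \dot\gamma = 0,
\]
so that $\gamma$ is a (smooth) geodesic of $(M,F)$ by \autoref{prop:geodesicEquivalentDefn}. With the integrand now vanishing, criticality for \emph{every} admissible $W$ reduces to the boundary identity
\[
g_{\dot\gamma(a)}\bigl(\dot\gamma(a), \mathbf{u}\bigr) = 0 \quad \text{for every } \mathbf{u} \in T_{\gamma(a)} N,
\]
which is precisely the defining condition $\dot\gamma(a) \in \nu_{\gamma(a)}(N)$ from \autoref{defn:normalCone}, and combined with $F(\dot\gamma(a)) = 1$ gives $\dot\gamma(a) \in S(\nu_{\gamma(a)})$.

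This characterization is identical for the energy and the length variations (same two conclusions), so the first two bullets are equivalent. For the equivalence with the third, note that a curve $\gamma$ satisfying both conditions is, by uniqueness of geodesics with prescribed initial data, exactly
\[
\gamma(t) = \exp_{\gamma(a)}\bigl((t-a)\dot\gamma(a)\bigr) = \exp^\nu\bigl((t-a)\mathbf{v}\bigr),\qquad \mathbf{v}=\dot\gamma(a) \in S(\nu_{\gamma(a)}),
\]
and conversely any such curve is a unit-speed geodesic whose initial velocity lies in $\nu$, so it satisfies both conditions and hence is critical for both $E|_{\mathcal{P}_N}$ and $L|_{\mathcal{P}_N}$.

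The only real obstacle is the care needed at the $N$-endpoint of the variation formula: unlike the classical proper case, $W(a)$ is not zero but merely tangent to $N$, so the boundary term $-g_{\dot\gamma(a)}(\dot\gamma(a),W(a))$ survives and is precisely what forces the normal-cone condition. Torsion freeness of the Chern connection lets the integration-by-parts step go through verbatim as in the Riemannian case, while almost metric compatibility contributes Cartan-tensor terms of the form $C_{\dot\gamma}(D^{\dot\gamma}_\gamma \dot\gamma,\cdot,\cdot)$ that vanish on geodesics and, at intermediate stages, of the form $C_{\dot\gamma}(\dot\gamma,\cdot,\cdot)$ that vanish identically. Once these cancellations are recorded, the rest of the argument is a direct transcription of the Riemannian proof.
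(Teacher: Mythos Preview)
Your argument is correct and is the standard derivation. The paper does not actually prove this proposition: immediately after the statement it writes ``For proof of the above equivalences, we refer to \cite{Zhao2017}.'' What you have written is precisely the expected proof --- isolate $D^{\dot\gamma}_\gamma\dot\gamma=0$ from compactly supported variations, then read off $\dot\gamma(a)\in\nu_{\gamma(a)}$ from the surviving boundary term $-g_{\dot\gamma(a)}(\dot\gamma(a),W(a))$ with $W(a)\in T_{\gamma(a)}N$, and finally identify the curve with the normal-exponential ray by uniqueness of geodesics. Your remarks on why the Cartan-tensor corrections vanish (via $C_{\dot\gamma}(\dot\gamma,\cdot,\cdot)=0$) and why the $E$- and $L$-first variations coincide at unit speed are accurate.

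One small point to tighten: curves in $\mathcal{P}_N$ are only piecewise smooth (cf.\ \autoref{defn:NGeodesic}), so the first variation formula carries jump terms $\sum_k g_{\dot\gamma}\bigl(\dot\gamma(t_k^-)-\dot\gamma(t_k^+),W(t_k)\bigr)$ at the breakpoints in addition to the integral and boundary terms you wrote. The same bump-function localization kills these and forces $\gamma$ to be genuinely smooth before the endpoint analysis; you should record this step explicitly. Also note that your formula $\gamma(t)=\exp^\nu\bigl((t-a)\mathbf{v}\bigr)$ is the correct normalization on the domain $[a,b]$; the paper's ``$\gamma(t)=\exp^\nu(t\mathbf{v})$, $p=\gamma(0)$'' tacitly assumes $a=0$.
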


For proof of the above equivalences, we refer to \cite{Zhao2017}. Next, we define the notion of global distance minimizer from $N$.

\begin{defn}\label{defn:NSegment}
    An $N$-geodesic $\gamma : [0, \ell] \to M$ is called an \emph{$N$-segment} joining $N$ to $\gamma(\ell)$ if $d(N, \gamma(\ell)) = L(\gamma) = \ell$.
\end{defn}

We have the following useful lemma which can be compared to \cite[Theorem 5.16, pg. 24]{Bus55}. By the notation $\gamma_i \rightarrow \gamma$, for unit-speed geodesics defined on possibly different domains, we mean $\gamma_i(t) \rightarrow \gamma(t)$ for all $t \ge 0$, which is justifiable under our assumption of forward completeness. Alternatively, the convergence can be understood by reparametrizing all the curves to a fixed domain, we refer to \cite[pg. 24]{Bus55} for the details..

\begin{lemma}\label{lemma:convergentSubsequenceNSegment}
    Suppose, $\gamma_i : [0, \ell_i] \rightarrow M$ are unit-speed minimizers joining $p_i = \gamma_i(0)$ to $q_i = \gamma_i(\ell_i)$, where $\ell_i = d(p_i, q_i)$. Suppose $\ell_i \rightarrow \ell$. If either
    \begin{center}
        (a) $p_i \rightarrow p$ and $F$ is forward complete, \kern.5cm or \kern.5cm (b) $q_i \rightarrow q$ and $F$ is backward complete,
    \end{center}
    then a subsequence of $\gamma_i$ converges uniformly to a minimizer $\gamma$, which satisfies $L(\gamma) = \ell$.
\end{lemma}
\begin{proof}
    Suppose (a) holds. Consider the curves $\hat{\gamma}_i : [0, \ell] \rightarrow M$ by $\hat{\gamma}_i(t) = \gamma_i \left( t \frac{\ell_i}{\ell}  \right)$. Since $\gamma_i$ is a minimizer, it follows that the family $\left\{ \hat{\gamma}_i \right\}$ is uniformly (forward) Lipschitz. Indeed, for $0 \le t_1 \le t_2 \le \ell$ and for all $i$ we have,
    \[d\left( \hat{\gamma}_i(t_1), \hat{\gamma}_i(t_2) \right) = d \left( \gamma_i \left( t_1 \frac{\ell_i}{\ell}  \right), \gamma_i \left( t_2 \frac{\ell_i}{\ell}  \right) \right) = \left( t_2 - t_1 \right)\frac{\ell_i}{\ell} \le \kappa \left( t_2 - t_1 \right),\]
    where the limit $\kappa \coloneqq \frac{1}{\ell} \sup_i \ell_i$ is finite since $\ell_i \rightarrow \ell < \infty$. Next, consider the forward closed ball $K \coloneqq \overline{B_+(p, \ell + 1)} = \left\{ x \;\middle|\; d(p, x) \le \ell + 1 \right\}$ which is compact by the Hopf-Rinow theorem. As $p_i \rightarrow p$ and $\ell_i \rightarrow \ell$, we have $d(p, p_i) < \frac{1}{3}$ and $\ell_i < \ell + \frac{1}{3}$ for $i$ large. Now, for any $0 \le t \le \ell$, we have $d(p_i, \hat{\gamma}_i(t)) = d\left( \gamma_i(0), \gamma_i \left( t \frac{\ell_i}{\ell}  \right) \right) = t\frac{\ell_i}{\ell} \le \ell_i < \ell + \frac{1}{3}$ and hence,
    \[d(p, \hat{\gamma}_i(t)) \le d(p, p_i) + d(p_i, \hat{\gamma}_i(t)) < \frac{1}{3} + \ell + \frac{1}{3}  < \ell + \frac{2}{3} < \ell + 1.\]
    In other words, $\hat{\gamma}_i$ is contained in $K$, and thus the family $\left\{ \hat{\gamma}_i \right\}_i$ is uniformly bounded. But then, by an application of the Arzel\'a-Ascoli theorem \cite{CoZi07}, a subsequence converges uniformly to some curve $\gamma : [0, \ell] \rightarrow M$, joining $p = \lim p_i$ to $q \coloneqq \lim q_i$. As $\gamma$ is the uniform limit of Lipschitz functions, it is Lipschitz, and thus, almost everywhere differentiable. By \cite[(5.8), pg. 20]{Bus55}, $L(\gamma) \le \lim \inf L(\gamma_i) = \ell$. On the other hand, $d(p, q) = \lim d(p_i, q_i) = \lim \ell_i = \ell$. Thus, $\gamma$ must be a smooth minimizer joining $p$ to $q$, with $L(\gamma) = \ell = \lim L(\gamma_i)$.

    Now, suppose (b) holds. But then, the reversed curve $\bar{\gamma}_i : [0, \ell_i]\rightarrow M$ given by $\bar{\gamma}_i(t) = \gamma_i(\ell - t)$ is a geodesic with respect to the reverse metric $\bar{F}$, which is now forward complete by hypothesis. The proof follows as in (a).
\end{proof}

As an immediate consequence, we have the following.

\begin{prop}\label{prop:existenceOfNSegements}
    Suppose $(M, F)$ is a forward complete Finsler manifold, and $N$ is a closed submanifold of $M$. Let $\gamma_i : [0, \ell_i] \rightarrow M$ be $N$-segments joining $N$ to $q_i \coloneqq  \gamma_i(\ell_i)$. Suppose that either
    \begin{equation}
        \tag{$\mathsf{H}$} \label{eq:hypothesisH}
        \text{(a) $N$ is compact, \kern.5cm or \kern.5cm (b) $F$ is backward complete as well.}
    \end{equation}
    If $\ell_i \rightarrow \ell$ and $q_i \rightarrow q$, then there exists an $N$-segment $\gamma : [0, \ell]\rightarrow M$ joining $N$ to $q$, with a subsequence $\gamma_i \rightarrow \gamma$. In particular, for any $q \in M$ there exists an $N$-segment $\gamma$ joining $N$ to $q$, with $d(N, q) = L(\gamma)$.
\end{prop}
\begin{proof}
    Set $p_i = \gamma_i(0)$, and suppose $\gamma_i(t) = \exp^\nu(t \mathbf{v}_i)$ for $\mathbf{v}_i \in S(\nu_{p_i})$. If $N$ is compact, then we can get a subsequence $(p_i, \mathbf{v}_i) \rightarrow (p, \mathbf{v}) \in S(\nu)$, and we can appeal to \autoref{lemma:convergentSubsequenceNSegment}. Otherwise, if $F$ is backward complete, again \autoref{lemma:convergentSubsequenceNSegment} applies as $q_i \rightarrow q$ is given. In any case, we have a minimizer $\gamma : [0, \ell] \rightarrow M$ joining $p$ to $q$, with $\gamma_i \rightarrow \gamma$, where $p$ being an accumulation point of $p_i$, belongs to $N$. Note that, $d(N, q) = \lim d(N, q_i) = \lim \ell_i = \ell = L(\gamma)$. A standard argument using the first variation principle \cite{Zhao2017} shows that $\dot\gamma(0) \in S(\nu_p)$, and thus $\gamma$ is an $N$-segment joining $N$ to $q$.

    Now, suppose $q \in M \setminus N$, and let $\ell \coloneqq d(N, q) = \inf_{p \in N} d(p, q)$. Choose $p_i \in N$ so that $\ell_i \coloneqq d(p_i, q) \rightarrow \ell$. Let $\gamma_i : [0, \ell_i] \rightarrow M$ be a minimizer joining $p_i = \gamma_i(0)$ to $q = \gamma_i(\ell_i)$. As argued before, we then get an $N$-segment $\gamma : [0, \ell] \rightarrow M$ joining $N$ to $q$, with $d(N, q) = \ell = L(\gamma)$.
\end{proof}

\begin{remark}\label{rmk:distanceNotAchieved}
    As the notions of forward and backward completeness coincide in a complete Riemannian manifold, hypothesis (\hyperref[eq:hypothesisH]{H}) is automatically satisfied for a topologically closed submanifold. As such, one can remove the compactness assumption on the submanifold from most of the results in \cite{BaPr21}. On the other hand, for a general Finsler manifold, in the absence of the hypothesis (\hyperref[eq:hypothesisH]{H}), the distance from a closed submanifold to a point may not even be achieved on the submanifold. Indeed, consider the following example.
    \begin{figure}[H]
        \centering
    \def\svgwidth{0.4\columnwidth}
\begingroup%
  \makeatletter%
  \providecommand\color[2][]{%
    \errmessage{(Inkscape) Color is used for the text in Inkscape, but the package 'color.sty' is not loaded}%
    \renewcommand\color[2][]{}%
  }%
  \providecommand\transparent[1]{%
    \errmessage{(Inkscape) Transparency is used (non-zero) for the text in Inkscape, but the package 'transparent.sty' is not loaded}%
    \renewcommand\transparent[1]{}%
  }%
  \providecommand\rotatebox[2]{#2}%
  \newcommand*\fsize{\dimexpr\f@size pt\relax}%
  \newcommand*\lineheight[1]{\fontsize{\fsize}{#1\fsize}\selectfont}%
  \ifx\svgwidth\undefined%
    \setlength{\unitlength}{384.3899376bp}%
    \ifx\svgscale\undefined%
      \relax%
    \else%
      \setlength{\unitlength}{\unitlength * \real{\svgscale}}%
    \fi%
  \else%
    \setlength{\unitlength}{\svgwidth}%
  \fi%
  \global\let\svgwidth\undefined%
  \global\let\svgscale\undefined%
  \makeatother%
  \begin{picture}(1,0.88860519)%
    \lineheight{1}%
    \setlength\tabcolsep{0pt}%
    \put(0,0){\includegraphics[width=\unitlength,page=1]{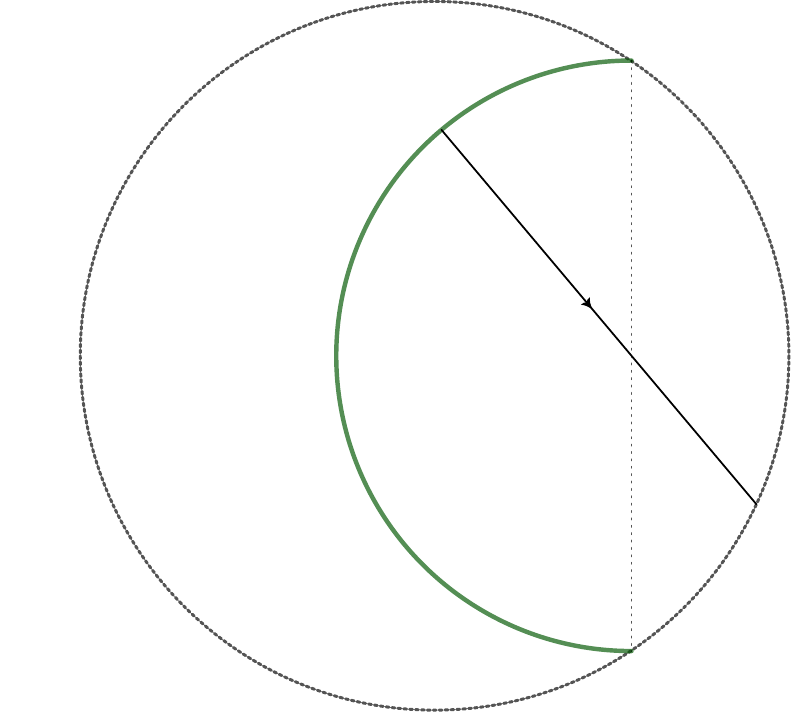}}%
    \put(0.80716188,0.837492){\color[rgb]{0,0,0}\makebox(0,0)[lt]{\lineheight{1.25}\smash{\begin{tabular}[t]{l}$a_1$\end{tabular}}}}%
    \put(0.79954398,0.02903956){\color[rgb]{0,0,0}\makebox(0,0)[lt]{\lineheight{1.25}\smash{\begin{tabular}[t]{l}$a_2$\end{tabular}}}}%
    \put(0.80583072,0.44381869){\color[rgb]{0,0,0}\makebox(0,0)[lt]{\lineheight{1.25}\smash{\begin{tabular}[t]{l}$x$\end{tabular}}}}%
    \put(0.52829612,0.75961502){\color[rgb]{0,0,0}\makebox(0,0)[lt]{\lineheight{1.25}\smash{\begin{tabular}[t]{l}$p$\end{tabular}}}}%
    \put(0.96251424,0.24891061){\color[rgb]{0,0,0}\makebox(0,0)[lt]{\lineheight{1.25}\smash{\begin{tabular}[t]{l}$a$\end{tabular}}}}%
    \put(0.31824026,0.40286294){\color[rgb]{0,0,0}\makebox(0,0)[lt]{\lineheight{1.25}\smash{\begin{tabular}[t]{l}$N$\end{tabular}}}}%
    \put(-0.00121946,0.61221983){\color[rgb]{0,0,0}\makebox(0,0)[lt]{\lineheight{1.25}\smash{\begin{tabular}[t]{l}$\partial \Omega$\end{tabular}}}}%
    \put(0,0){\includegraphics[width=\unitlength,page=2]{funk.pdf}}%
  \end{picture}%
\endgroup%

        \caption{Funk metric on a disc in $\mathbb{R}^2$}
        \label{fig:funk}
    \end{figure}
    In \autoref{fig:funk}, consider $\Omega$ to be an open disc of radius $R$ in $\mathbb{R}^2$ with the Funk metric $d_\Omega$ \cite[Section 6.5]{Ohta2021}. It is well-known that $d_\Omega$ is induced by a Finsler metric on $\Omega$, which is forward complete but \emph{not} backward complete. Let $a_1 a_2$ be a chord of length $2r$ with midpoint $x$, and $N$ be the semicircle with $a_1 a_2$ as diameter, without the endpoints $\left\{ a_1, a_2 \right\}$. Note that $N$ is closed in the subspace topology of $\Omega$, but not compact. Now, for some $p \in N$ let $a$ be the intersection of $\partial \Omega$ with the ray joining $p$ to $x$. Denoting $\left\lVert \cdot \right\rVert$ as the Euclidean distance, we have 
    \[d_\Omega(p, x) = \log \left( \frac{\left\lVert p - a \right\rVert}{\left\lVert x - a \right\rVert}  \right) = \log \left( \frac{\left\lVert p - x \right\rVert + \left\lVert x - a \right\rVert}{\left\lVert x - a \right\rVert} \right) = \log \left( 1 + \frac{r}{\left\lVert x - a \right\rVert}  \right),\]
    which attains the infimum $\log \left( 1 + \frac{r}{r}  \right) = \log 2$ as $p \rightarrow  a_1$ or $a_2$. Thus, $d(N, x) = \log 2$, but the distance is not achieved on $N$.
\end{remark}

Similar to \autoref{prop:globalMinimizerIsLocalMinimizer}, we have the following result.
\begin{prop}\label{prop:distanceMinimizingNormal}
    An $N$-geodesic $\gamma : [0,\ell] \rightarrow  M$ is an $N$-segment if and only if $d(N, \gamma(t)) = t$ for all $0 \le t \le \ell$.
\end{prop}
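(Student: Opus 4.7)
The plan is to prove both implications using a concatenation argument on curves in $\mathcal{P}_N$, in the same spirit as \autoref{prop:globalMinimizerIsLocalMinimizer}.

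For the ``if'' direction, suppose $d(N, \gamma(t)) = t$ for every $t \in [0, \ell]$. Setting $t = \ell$ yields $d(N, \gamma(\ell)) = \ell$, and since $\gamma$ is unit-speed (being an $N$-geodesic, via \autoref{prop:NGeodesicEquivalentDefn} which gives $\gamma(t) = \exp^\nu(t\mathbf{v})$ with $\mathbf{v} \in S(\nu_p)$), we have $L(\gamma) = \ell$, so $\gamma$ is an $N$-segment by \autoref{defn:NSegment}.

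For the ``only if'' direction, assume $\gamma$ is an $N$-segment, so $d(N, \gamma(\ell)) = \ell$. Fix $t \in [0, \ell]$. Since $\gamma|_{[0,t]}$ lies in $\mathcal{P}_N$ and has length $t$ (again by unit-speed), the definition of $d(N, \gamma(t))$ as an infimum gives $d(N, \gamma(t)) \le t$. For the reverse inequality, suppose to the contrary that $d(N, \gamma(t)) < t$. Then we can choose $\delta > 0$ and a curve $\alpha \in \mathcal{P}_N$ ending at $\gamma(t)$ with $L(\alpha) < t - \delta$. The concatenation $\alpha \ast \gamma|_{[t, \ell]}$ is a piecewise smooth curve in $\mathcal{P}_N$ terminating at $\gamma(\ell)$, of length strictly less than $(t - \delta) + (\ell - t) = \ell - \delta$. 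This contradicts $d(N, \gamma(\ell)) = \ell$, establishing $d(N, \gamma(t)) = t$.

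There is essentially no hard step here; the argument is a routine concatenation/triangle-inequality computation. The only point requiring care is that $\gamma|_{[t,\ell]}$ must be an admissible extension once we ``cut in'' at $\gamma(t)$ with a possibly non-smooth matching at that point, but since $\mathcal{P}_N$ is defined to allow piecewise smooth paths, this is permissible. As noted in \autoref{rmk:pseudoFinsler}, the same argument works verbatim in the pseudo-Finsler setting, and reversibility of $F$ plays no role.
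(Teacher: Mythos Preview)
Your proof is correct and follows essentially the same concatenation argument as the paper's. The only (minor) difference is that the paper picks an actual $N$-segment $\eta$ to $\gamma(T)$ for the shorter competitor, whereas you take any curve $\alpha \in \mathcal{P}_N$ with length below the infimum plus $\delta$; your choice is slightly more elementary since it avoids invoking \autoref{prop:existenceOfNSegements} (and hence the closedness of $N$), but the idea is identical.
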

\begin{proof}
    Suppose $\gamma$ is an $N$-segment. Then, $\gamma$ is in particular a minimizer joining $\gamma(0)$ to $\gamma(\ell)$. If possible, say, there exists some $0 < T < \ell$ so that $d(N, \gamma(T)) < T = L(\gamma|_{[0,T]})$. Then, there exists some $N$-segment $\eta : [0,T_{0}] \rightarrow  M$ joining $N$ to $q = \gamma(T) = \eta(T_{0})$, so that $L(\eta) = d(N, \gamma(T)) < L(\gamma|_{[0,T]})$. Consider the \emph{broken} curve $\zeta$ obtained by concatenating $\eta$ and $\gamma|_{[T,\ell]}$. It follows that 
    \[L(\zeta) = L(\eta) + L(\gamma|_{[T,\ell]}) < L(\gamma|_{[0,T]}) + L(\gamma|_{[T,\ell]}) = L(\gamma) = d(N, \gamma(\ell)),\]
    which contradicts the fact that $\zeta$ is a curve joining $N$ to $\gamma(\ell)$. The converse statement is obvious.
\end{proof}

\subsubsection{\texorpdfstring{$N$}{N}-Jacobi Fields}
\begin{defn}\label{defn:NGeodesicVariation}
    Given a unit-speed $N$-geodesic $\gamma : [a,b]\rightarrow M$, a variation $\Lambda : (-\epsilon, \epsilon) \times [a,b] \rightarrow M$ is said to be an \emph{$N$-geodesic variation} if for each $-\epsilon < s < \epsilon$, the curve $\Lambda_s : [a,b] \rightarrow M$ is a constant speed $N$-geodesic.
\end{defn}

\begin{defn}\label{defn:NJacobiField}
    Let $\gamma : [a,b] \rightarrow M$ be a unit-speed $N$-geodesic. A vector field $J \in \Gamma\gamma^*TM$ is called an $N$-Jacobi field of $\gamma$ if there exists an $N$-geodesic variation $\Lambda:(-\epsilon,\epsilon)\times [a,b]\rightarrow M$ satisfying 
    \[\left.\frac{\partial}{\partial s}\right|_{s=0} \Lambda(s,t) = J(t), \quad t\in[a,b].\]
\end{defn}

\begin{prop}\label{prop:NJacobiEquation}\cite{Javaloyes2015}
    Given a unit-speed $N$-geodesic $\gamma:[a,b]\rightarrow M$, a vector field $J \in \Gamma\gamma^*TM$ is an $N$-Jacobi field if and only if it satisfies 
    \[D^{\dot\gamma}_\gamma D^{\dot\gamma}_\gamma J - R^{\gamma}(\dot\gamma, J) \dot\gamma = 0, \quad J(a)\in T_{\gamma(a)} N, \quad \pi  D^{\dot\gamma}_\gamma J(a) = \pi \nabla^{\dot\gamma}_J \dot\gamma(a),\]
    where $\pi$ is the first component projection of the canonical splitting $T_{\gamma(a)}M = T_{\gamma(a)}N \oplus \left( T_{\gamma(a)}N \right)^{\perp_{g_{\dot\gamma(a)}}}$.
\end{prop}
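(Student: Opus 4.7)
The plan is to prove the biconditional in two directions, relying throughout on the vanishing identity $C_V(V,\cdot,\cdot)=0$, which annihilates the Cartan corrections whenever one differentiates a $g_V$-inner product with $V$ as an argument.

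For the forward direction, assume $J$ arises from an $N$-geodesic variation $\Lambda:(-\epsilon,\epsilon)\times[a,b]\to M$, and set $T=\partial_t\Lambda$, $S=\partial_s\Lambda$, so $T|_{s=0}=\dot\gamma$ and $S|_{s=0}=J$. The Jacobi equation follows by the standard argument: $D^T_tT=0$ since each $\Lambda_s$ is a geodesic, $D^T_sT=D^T_tS$ by torsion-freeness, and the curvature identity $[D^T_s,D^T_t]T=R^T(S,T)T$ combine to yield $D^T_tD^T_tS=R^T(T,S)T$, which at $s=0$ is the stated equation. The membership $J(a)\in T_{\gamma(a)}N$ is immediate from $\Lambda(s,a)\in N$. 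For the third condition, fix $\widetilde X\in\Gamma TM$ near $\gamma(a)$ with $\widetilde X|_N\subset TN$ and set $X(s,t)=\widetilde X(\Lambda(s,t))$. Since $T(s,a)\in\nu_{\Lambda(s,a)}(N)$, we have $g_T(T,X)|_{(s,a)}=0$; differentiating in $s$ at $(0,a)$ via almost metric compatibility (the Cartan term drops out because $T$ is an argument of $C_T$) together with $D^T_sT=D^T_tS$, I obtain
\[g_{\dot\gamma(a)}\bigl(D^{\dot\gamma}_\gamma J(a),\widetilde X\bigr)=-g_{\dot\gamma(a)}\bigl(\dot\gamma(a),\nabla^{\dot\gamma(a)}_{J(a)}\widetilde X\bigr).\]
Next, let $V$ be an extension of $\dot\gamma(a)$ near $\gamma(a)$ with $V|_N$ a section of $\nu(N)$, so $g_V(V,\widetilde X)$ vanishes on $N$. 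Differentiating this identity in the direction $J(a)\in T_{\gamma(a)}N$ and again discarding the vanishing Cartan correction yields
\[g_{\dot\gamma(a)}\bigl(\nabla^V_{J(a)}V,\widetilde X\bigr)=-g_{\dot\gamma(a)}\bigl(\dot\gamma(a),\nabla^{\dot\gamma(a)}_{J(a)}\widetilde X\bigr).\]
Subtraction gives $g_{\dot\gamma(a)}\bigl(D^{\dot\gamma}_\gamma J(a)-\nabla^V_{J(a)}V,\widetilde X\bigr)=0$ for every tangent $\widetilde X(\gamma(a))\in T_{\gamma(a)}N$, which is the required tangential identity.

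For the converse, suppose $J$ satisfies the three conditions. Pick a smooth $\alpha:(-\epsilon,\epsilon)\to N$ with $\alpha(0)=\gamma(a)$ and $\alpha'(0)=J(a)$, together with a smooth lift $\mathbf v(s)\in\nu_{\alpha(s)}(N)$ satisfying $\mathbf v(0)=\dot\gamma(a)$. The variation $\Lambda(s,t)=\exp^\nu((t-a)\mathbf v(s))$ is an $N$-geodesic variation of $\gamma$, and its variation field $\widetilde J$ satisfies the Jacobi equation with initial data $\widetilde J(a)=J(a)$ and $D^{\dot\gamma}_\gamma\widetilde J(a)=D^{\mathbf v}_s\mathbf v(0)$. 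Applying the forward direction to $\widetilde J$ forces the tangential part of $D^{\dot\gamma}_\gamma\widetilde J(a)$ to equal $\pi\nabla^{\dot\gamma}_J\dot\gamma(a)$, which by hypothesis (iii) matches $\pi D^{\dot\gamma}_\gamma J(a)$. The normal component is adjustable: modifying $\mathbf v$ while keeping $\mathbf v(0)=\dot\gamma(a)$ alters $D^{\mathbf v}_s\mathbf v(0)$ by an arbitrary vector of $T_{\dot\gamma(a)}\nu_{\gamma(a)}(N)$, which via the Legendre transformation is identified with the linear space $(T_{\gamma(a)}N)^{\perp_{g_{\dot\gamma(a)}}}$. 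Choosing the lift so that the normal part of $D^{\mathbf v}_s\mathbf v(0)$ equals the normal part of $D^{\dot\gamma}_\gamma J(a)$, the fields $\widetilde J$ and $J$ solve the same Jacobi ODE with identical initial data, forcing $\widetilde J=J$.

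The main obstacle is the Finsler-specific derivation of the tangential initial condition in the forward direction: unlike in the Riemannian case one must invoke almost metric compatibility twice (once in $s$ along $\Lambda$, once along a direction in $N$ using the normal extension $V$ of $\dot\gamma(a)$), and both invocations rely crucially on the vanishing $C_V(V,\cdot,\cdot)=0$ to suppress the non-Riemannian Cartan corrections and reduce the identity to a classical-looking bilinear equation. The remaining ingredients—the Jacobi ODE derivation, the construction and adjustment of $\mathbf v(s)$, and the uniqueness argument—are direct adaptations of the Riemannian template.
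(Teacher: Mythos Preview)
The paper does not supply its own proof of this proposition; it is stated with a citation to \cite{Javaloyes2015} and left unproved. So there is nothing in the paper to compare against, and your proposal stands as an independent argument.

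Your argument is essentially correct and follows the natural line one finds in the cited reference. The derivation of the Jacobi ODE via $D^T_tT=0$, torsion-freeness, and the curvature identity is standard for the Chern connection (the potential non-Riemannian curvature corrections vanish here because the reference vector coincides with one of the arguments). Your handling of the tangential initial condition is the right idea: differentiating $g_T(T,X)=0$ along $s$ and $g_V(V,\widetilde X)=0$ along $N$, with both Cartan corrections killed by $C_V(V,\cdot,\cdot)=0$, indeed yields the equality of the $T_{\gamma(a)}N$-components. It is worth remarking, though you leave it implicit, that your second computation also shows $\pi\,\nabla^{V}_{J(a)}V$ is independent of the choice of normal extension $V$, so the quantity $\pi\,\nabla^{\dot\gamma}_J\dot\gamma(a)$ in the statement is well defined; this is the Finsler shape-operator.

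For the converse, your construction via $\alpha$ and a normal lift $\mathbf v(s)$ is the standard one. The only step that deserves a line more of justification is the claim that the normal component of $D^{\mathbf v}_s\mathbf v(0)$ can be prescribed arbitrarily in $(T_{\gamma(a)}N)^{\perp_{g_{\dot\gamma(a)}}}$: this follows because $\hat\nu\to N$ is a smooth fiber bundle, so curves through $\dot\gamma(a)$ covering $\alpha$ realize every vertical tangent direction, and the identification $T_{\dot\gamma(a)}\nu_{\gamma(a)}=(T_{\gamma(a)}N)^{\perp_{g_{\dot\gamma(a)}}}$ (which you assert) comes from differentiating the defining equation $g_{\mathbf v}(\mathbf v,w)=0$ and again using $C_{\mathbf v}(\mathbf v,\cdot,\cdot)=0$. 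With that detail filled in, uniqueness for the Jacobi ODE finishes the argument as you say.
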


From the second variation formula of the restricted energy functional $E|_{\mathcal{P}_N}$, we get the index form.

\begin{defn}\label{defn:NIndexForm}
    Let $\gamma : [a,b] \rightarrow M$ be a unit-speed $N$-geodesic. Then, the \emph{index form} $I_\gamma$ is defined for $X, Y \in T_\gamma \mathcal{P}_N$ with $X(b) = 0 = Y(b)$ as 
    \[\mathcal{I}_\gamma(X,Y) \coloneqq \int_a^b \left[ g_{\dot\gamma}(D^{\dot\gamma}_\gamma X, D^{\dot\gamma}_\gamma Y) - g_{\dot\gamma}\left( R^{\gamma}(\dot\gamma, X) Y, \dot \gamma \right) \right] - g_{\dot\gamma(a)} \left( \pi^\perp \nabla^{\dot\gamma(a)}_X Y, \dot\gamma(a) \right),\]
    where $\pi^\perp$ is the second component projection of the canonical splitting $T_{\gamma(a)}M = T_{\gamma(a)}N \oplus \left( T_{\gamma(a)}N \right)^{\perp_{g_{\dot\gamma(a)}}}$.
\end{defn}

It follows that $I_\gamma$ is a symmetric $2$-form, and the kernel of the index form $I_\gamma$ consists of precisely the $N$-Jacobi fields along $\gamma$.

\subsection{Cut Locus of a Submanifold}
\begin{defn}\label{defn:cutLocusP}
    The \emph{cut locus of a point} $p \in M$ is the set consisting of all $q \in M$ such that there exists a minimizer from $p$ to $q$, any extension of which fails to be distance minimizing. We denote the cut locus of $p$ by $\mathrm{Cu}(p)$.
\end{defn}

Generalizing the notion of a cut locus of a point to an arbitrary submanifold (or, more generally, any subset), we have the following definition.

\begin{defn}\label{defn:cutLocusN}
    Given a submanifold $N \subset M$, the \emph{cut locus} of $N$ consists of points $q \in M$ such that there exists an $N$-segment joining $N$ to $q$, whose extension fails to be an $N$-segment.
\end{defn}

\begin{prop}\label{prop:cutLocusAllSegmentsFail}
    $q \in \mathrm{Cu}(N)$ if and only if for \emph{any} $N$-segment $\gamma$ joining $N$ to $q$, an extension of $\gamma$ fails to be an $N$-segment.
\end{prop}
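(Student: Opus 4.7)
The forward direction is essentially formal: if every $N$-segment from $N$ to $q$ fails to extend as an $N$-segment, then in particular the $N$-segment guaranteed by \autoref{prop:existenceOfNSegements} fails to extend, so $q \in \mathrm{Cu}(N)$ by \autoref{defn:cutLocusN}. All the content lies in the reverse implication.

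For the reverse direction, suppose $q \in \mathrm{Cu}(N)$, so by definition there is some $N$-segment $\gamma_0 : [0, \ell] \to M$ from $N$ to $q$ admitting no $N$-segment extension. Let $\gamma : [0,\ell] \to M$ be an arbitrary $N$-segment from $N$ to $q$. I will argue by contradiction: assume $\gamma$ extends to an $N$-segment $\tilde{\gamma} : [0, \ell + \epsilon] \to M$, and set $q' \defeq \tilde{\gamma}(\ell+\epsilon)$, so that $d(N, q') = \ell + \epsilon$. The key construction is the piecewise smooth concatenation
\[
\zeta \defeq \gamma_0 * \tilde{\gamma}\big|_{[\ell, \ell+\epsilon]},
\]
which joins $N$ to $q'$ and has length $\ell + \epsilon = d(N, q')$. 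Thus $\zeta$ realizes the distance from $N$ to $q'$.

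Now I would apply the standard regularity principle quoted after \autoref{prop:conjugateLocusJacobiField}: any piecewise smooth curve achieving the distance between its endpoints is, up to reparametrization, a smooth minimizing geodesic. Since $\zeta$ is already unit-speed, it is a smooth geodesic starting at $\gamma_0(0) \in N$ with initial velocity $\dot{\gamma}_0(0) \in \nu(N)$; by \autoref{prop:NGeodesicEquivalentDefn} it is in fact an $N$-geodesic. Combined with $L(\zeta) = d(N, q')$, this makes $\zeta$ an $N$-segment of length $\ell + \epsilon$ whose restriction to $[0, \ell]$ coincides with $\gamma_0$. This exhibits an $N$-segment extension of $\gamma_0$, directly contradicting the defining hypothesis on $\gamma_0$.

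The only delicate point is the regularity step, namely that a minimizing broken curve from $N$ is automatically smooth and hence a genuine $N$-geodesic, not merely piecewise so; this is a standard consequence of the first variation formula for $L|_{\mathcal{P}_N}$ in the Finsler setting (cf.\ \cite{Javaloyes2015}), and it is already recorded in the excerpt for minimizers between two points. Notably, this argument does not require $\gamma \neq \gamma_0$, nor does it appeal to \autoref{prop:geodesicsIntersectTransversally}: the contradiction comes purely from producing an $N$-segment extension of $\gamma_0$ using the tail of $\tilde{\gamma}$.
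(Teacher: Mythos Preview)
Your proof is correct and uses the same concatenation idea as the paper: glue the non-extending $N$-segment to the tail of the extending one, observe that the result realizes $d(N,q')$, and invoke the regularity of length-minimizers. The paper's version of the contradiction is slightly different: it appeals to \autoref{prop:geodesicsIntersectTransversally} to ensure that $\dot\gamma_0(\ell)\neq\dot\gamma(\ell)$ (assuming $\gamma\neq\gamma_0$), so that $\zeta$ is genuinely broken at $q$, which then clashes with the smoothness forced by minimality. Your route---concluding instead that the smooth $\zeta$ is itself an $N$-segment extension of $\gamma_0$---is a small but genuine simplification: it makes the transversality lemma unnecessary and, as you note, handles the case $\gamma=\gamma_0$ without a separate remark.
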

\begin{proof}
    Suppose $q \in \mathrm{Cu}(N)$. We then have an $N$-segment $\gamma : [0,\ell] \rightarrow  M$ with $\ell = d(N, q)$ and $q = \gamma(\ell)$, whose extension fails to be an $N$-segment. If possible, let $\eta : [0, \ell] \rightarrow  M$ be another $N$-segment joining $N$ to $q = \eta(\ell)$, which remains an $N$-segment when extended to $\eta : [0, \ell + \epsilon] \rightarrow  M$ for some $\epsilon > 0$. Since $\gamma, \eta$ are distinct, we have $\dot\gamma(\ell) \ne \dot\eta(\ell)$. Thus, we have a \emph{broken} curve $\zeta$ obtained from concatenating $\gamma$ and $\eta|_{[\ell, \ell + \epsilon]}$. Now, $L(\zeta) = L(\gamma) + L(\eta|_{[\ell, \ell + \epsilon]}) = \ell + \epsilon = d(N, \eta(\ell + \epsilon))$, i.e., $\zeta$ achieves the minimal distance. But then $\zeta$ must be a smooth curve, which is a contradiction.
    
    The converse statement is obvious.
\end{proof}

\begin{defn}\label{defn:cutTime}
    Given $\mathbf{v} \in S(\nu)$, the \emph{cut time} of $\mathbf{v}$ is defined as 
    \[\rho(\mathbf{v}) \coloneqq \sup \left\{ t \;\middle|\; d(N, \gamma_{\mathbf{v}}(t \mathbf{v})) = t \right\},\]
    where $\gamma_{\mathbf{v}}(t) = \exp^\nu(t \mathbf{v})$ is the unique $N$-geodesic with initial velocity $\mathbf{v}$.
\end{defn}

Note that we allow $\rho$ to take the value $\infty$, although if $M$ is compact then $\rho(\mathbf{v}) < \infty$ for any $\mathbf{v} \in S(\nu)$ (\autoref{obs:compactnessAndTangentCutLocus} (\hyperref[obs:compactness:MCompactImpliesRhoFinite]{2})). Indeed, we shall see in \autoref{thm:rhoContinuous} that the map $\rho: S(\nu ) \rightarrow  [0, \infty]$ is continuous, and consequently, $\rho$ is finite if $N$ is compact (\autoref{obs:compactnessAndTangentCutLocus}(\hyperref[obs:compactness:NCompactRhoFiniteIMpliesMCompact]{3})). It follows from \cite{Alves2019} that $\rho$ is always strictly positive for any $N$ not necessarily compact.

\begin{defn}\label{defn:tangentCutLocus}
    Given a submanifold $N \subset M$, the \emph{tangent cut locus} of $N$ is defined as 
    \[\widetilde{\mathrm{Cu}}(N) \coloneqq \left\{ \rho(\mathbf{v}) \mathbf{v} \;\middle|\; \mathbf{v}\in S(\nu), \; \rho(\mathbf{v}) \ne \infty \right\} \subset \nu.\]
\end{defn}

It follows from \autoref{defn:cutLocusN} that $\mathrm{Cu}(N) = \exp^\nu(\widetilde{\mathrm{Cu}}(N))$. 

\subsection{Focal Locus of a Submanifold} 
\begin{defn}\label{defn:tangentFocalLocus}
    Given a submanifold $N \subset M$, a vector $\mathbf{v}\in \hat{\nu}$ is said to be a \emph{tangent focal point} of $N$ if 
    \[d(\exp^\nu)|_{\mathbf{v}} : T_{\mathbf{v}} \hat{\nu} \rightarrow T_{\exp^\nu(\mathbf{v})}M\]
    is degenerate, i.e., if $\mathbf{v}$ is a critical point of $\exp^\nu|_{\hat{\nu}}$. The nullity of the map is known as the \emph{index} of the tangent focal point $\mathbf{v}$. The collection of all such vectors is called the \emph{tangent focal locus} of $N$.
\end{defn}

\begin{defn}\label{defn:focalTime}
    Given $N \subset M$ and $\mathbf{v} \in S(\nu)$, we define the \emph{(first) focal time} as 
    \[\lambda(\mathbf{v}) = \inf \left\{ t \;\middle|\; \text{$d(\exp^\nu)|_{t \mathbf{v}}$ is degenerate} \right\}.\]
    We set $\lambda(\mathbf{v}) = \infty$ if there are no focal points of $N$ in the direction of $\mathbf{v}$.
\end{defn}

\begin{defn}\label{defn:focalLocus}
    The \emph{focal locus} of $N$ is defined as the image of the tangent focal locus under the $\exp^\nu$ map. In other words, the focal locus of $N$ consists of the critical values of the (restricted) normal exponential map $\exp^\nu|_{\hat{\nu}}$. The \emph{first focal locus} of $N$ is defined as the set $\left\{ \exp^\nu(\lambda(\mathbf{v}) \mathbf{v}) \mid \mathbf{v} \in S(\nu), \lambda(\mathbf{v}) \ne \infty \right\}$.
\end{defn}

In the literature, the focal locus of a submanifold (of a Riemannian manifold) is also known as the conjugate locus. In this article, we reserve the term conjugate locus for when $N$ is a point. The following result is a natural generalization of \autoref{prop:conjugateLocusJacobiField}, a proof of which can be deduced from \cite[Proposition 4.5]{Zhao2017}.

\begin{prop}\label{prop:focalLocusJacobiField} Let $\mathbf{v} \in S(\nu)$ and $\gamma$ be the unit-speed $N$-geodesic given as $\gamma(t) = \exp^\nu(t \mathbf{v})$. Then for any $t > 0$, the kernel of the map 
    \[d\left( \exp^\nu|_{\hat{\nu}} \right)\big|_{t\mathbf{v}} : T_{t\mathbf{v}} \hat{\nu} \rightarrow T_{\gamma(t)} M\]
    is naturally isomorphic to the vector space 
    \[\left\{ J \;\middle|\; \text{$J$ is an $N$-Jacobi field along $\gamma$, satisfying $J(t) = 0$} \right\}.\]
\end{prop}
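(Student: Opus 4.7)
The plan is to imitate the classical Riemannian argument by constructing a natural linear isomorphism from the kernel to the target space using $N$-geodesic variations built out of the tangent vector. A dimension count serves as a sanity check: $T_{t\mathbf{v}}\hat{\nu}$ has dimension $\dim M$, while by \autoref{prop:NJacobiEquation} the space of $N$-Jacobi fields is parametrized by pairs $(J(0), D^{\dot\gamma}_\gamma J(0))$ with $J(0) \in T_{\gamma(0)}N$ and the tangential part of $D^{\dot\gamma}_\gamma J(0)$ determined by $J(0)$, contributing $\dim N + \codim N = \dim M$ degrees of freedom.

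First I would set up the construction. Given $\mathbf{w} \in T_{t\mathbf{v}}\hat{\nu}$, choose any smooth curve $\alpha : (-\epsilon, \epsilon) \to \hat{\nu}$ with $\alpha(0) = t\mathbf{v}$ and $\alpha'(0) = \mathbf{w}$, and denote its footpoint curve by $p(s) = \pi(\alpha(s)) \in N$. Define
\[\Lambda : (-\epsilon, \epsilon) \times [0, t] \to M, \qquad \Lambda(s, \tau) = \exp\!\left( \frac{\tau}{t}\, \alpha(s) \right).\]
Since $\alpha(s)/t \in \nu_{p(s)}$, each $\Lambda_s$ is a constant-speed $N$-geodesic starting at $p(s)$, so $\Lambda$ is an $N$-geodesic variation of $\gamma$ and the variation field $J(\tau) \defeq \partial_s|_{s=0}\Lambda(s, \tau)$ is an $N$-Jacobi field along $\gamma$ by \autoref{defn:NJacobiField}. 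Evaluating at $\tau = t$ gives
\[J(t) = \left.\frac{\partial}{\partial s}\right|_{s=0} \exp(\alpha(s)) = d(\exp^\nu|_{\hat{\nu}})\big|_{t\mathbf{v}}(\mathbf{w}),\]
so $J$ vanishes at $t$ precisely when $\mathbf{w}$ lies in the kernel. This defines a candidate map $\Phi(\mathbf{w}) \defeq J$ from the kernel to the space of $N$-Jacobi fields that vanish at $t$.

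Next I would verify that $\Phi$ is well-defined and linear. The initial data $J(0) = p'(0)$ and the vertical portion of $D^{\dot\gamma}_\gamma J(0)$ can both be read off from $\mathbf{w}$ alone via the natural short exact sequence for $T_{t\mathbf{v}}\hat{\nu}$: the pushforward $\pi_\ast$ recovers $J(0)$, while the fiber-direction component of $\mathbf{w}$ recovers the normal part of $D^{\dot\gamma}_\gamma J(0)$, and the tangential part is then fixed by the constraint in \autoref{prop:NJacobiEquation}. Since the $N$-Jacobi equation is a linear ODE with unique solutions for admissible initial data, $J$ depends only on $\mathbf{w}$ and does so linearly. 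Injectivity follows by running this in reverse: if $J \equiv 0$, then $J(0) = 0$ kills the horizontal component of $\mathbf{w}$, and $D^{\dot\gamma}_\gamma J(0) = 0$ kills the vertical component.

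For surjectivity, given any $N$-Jacobi field $J$ with $J(t) = 0$, \autoref{defn:NJacobiField} supplies an $N$-geodesic variation $\Lambda$ realizing $J$; writing $\Lambda_s(\tau) = \exp(\tau\,\mathbf{v}_s)$ with $\mathbf{v}_s \in \nu_{\Lambda(s,0)}$ and setting $\mathbf{w} \defeq \left.\tfrac{d}{ds}\right|_{s=0}(t\mathbf{v}_s) \in T_{t\mathbf{v}}\hat{\nu}$, the same computation as above yields $\Phi(\mathbf{w}) = J$ and places $\mathbf{w}$ in the kernel. I expect the main obstacle to be the well-definedness step, specifically matching the horizontal--vertical decomposition of $T_{t\mathbf{v}}\hat{\nu}$ with the initial-data description of $N$-Jacobi fields; this is a purely linear algebraic exercise once the right splitting is chosen, but requires care because $\hat{\nu}$ is only a cone bundle rather than a genuine vector bundle.
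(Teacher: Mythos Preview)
The paper states this proposition without proof (as it does the analogous point-version, \autoref{prop:conjugateLocusJacobiField}); it is treated as a standard fact from the literature. Your argument via $N$-geodesic variations built from curves in $\hat{\nu}$ is the standard route and is correct. The well-definedness you flag goes through cleanly once you observe that for each fixed $\tau$ the formula $J(\tau) = d\exp\big|_{\tau\mathbf{v}}\!\left( d(m_{\tau/t})\big|_{t\mathbf{v}}(\mathbf{w})\right)$, with $m_c$ the fiberwise scaling by $c$ on $TM$, depends only on $\mathbf{w}$ and is linear in it; injectivity and surjectivity then follow as you outline, and the dimension count (both sides have dimension $\dim M$) makes either one redundant.
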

\section{Main Results} \label{sec:mainResults}

We start with an important characterization of the cut locus in terms of the focal locus (\autoref{thm:cutPointClassification}). A result of Basu and Prasad \cite{BaPr21} is then generalized to prove that the cut locus is the closure of the separating set, and hence it is closed (\autoref{thm:separatingSetDense}). Next, we will see that the complement of the cut locus of a submanifold strongly deforms onto the submanifold, and dually, the complement of a submanifold deforms onto its cut locus (\autoref{thm:deformation}). Lastly, we prove the existence of $N$-geodesic loops (\autoref{thm:NGeodesicLoop}) which is a generalization of the results of \cite{Innami2012,Xu2015}. Note that in the light of \autoref{rmk:distanceNotAchieved}, henceforth for most of the results we require condition (\hyperref[eq:hypothesisH]{H}) to hold true.

\subsection{Characterization of Cut Locus}
In this section, we discuss two ways one can characterize the cut locus of a submanifold, which are well-known in the Riemannian setting. Recall first the definition.
\begin{defn}\label{defn:separtingSet}
    Given a submanifold $N \subset M$, a point $p \in M$ is said to be a \emph{separating point} \footnote{The notation $\mathrm{Se}(N)$ was introduced by F.E. Wolter in \cite{Wol79}. In a personal communication, Wolter has commented that ``I think that I was the first to introduce the notation $\mathrm{Se}(\cdot)$ ... simply as derived from `\emph{Several} geodesics'.''} of $N$ if there exist two distinct $N$-segments joining $N$ to $p$. The collection of all separating points of $N$ is called the \emph{separating set} (or the \emph{Maxwell strata}) of $N$, denoted $\mathrm{Se}(N)$.
\end{defn}

\begin{prop}\label{prop:separtingSetContainedInCutLocus}
    Let $N$ be a submanifold of $(M, F)$. Then, $\mathrm{Se}(N) \subset \mathrm{Cu}(N)$.
\end{prop}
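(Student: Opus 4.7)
The plan is to argue by contradiction using the same broken-curve/corner argument that underlies \autoref{prop:cutLocusAllSegmentsFail}. Suppose $p \in \mathrm{Se}(N)$ but $p \notin \mathrm{Cu}(N)$. By definition of the separating set, there exist two distinct $N$-segments $\gamma_1, \gamma_2 : [0, \ell] \to M$ with $\gamma_1(\ell) = \gamma_2(\ell) = p$ and $\ell = d(N, p)$. Since $p$ is assumed not to lie in $\mathrm{Cu}(N)$, \autoref{prop:cutLocusAllSegmentsFail} (in its contrapositive form) gives that \emph{every} $N$-segment from $N$ to $p$ extends to a longer $N$-segment; in particular, we can extend $\gamma_1$ to an $N$-segment $\tilde\gamma_1 : [0, \ell + \epsilon] \to M$ for some $\epsilon > 0$.

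Next, I would concatenate $\gamma_2$ with the tail of $\tilde\gamma_1$ to form a broken piecewise smooth curve $\zeta : [0, \ell + \epsilon] \to M$ starting at $\gamma_2(0) \in N$ and ending at $q \defeq \tilde\gamma_1(\ell + \epsilon)$. Since both pieces are unit-speed and $L$ is additive under concatenation,
\[
L(\zeta) = L(\gamma_2) + L(\tilde\gamma_1|_{[\ell,\ell+\epsilon]}) = \ell + \epsilon = L(\tilde\gamma_1) = d(N, q),
\]
using that $\tilde\gamma_1$ is an $N$-segment. So $\zeta$ realizes the distance from $N$ to $q$. By the standard regularity statement recalled just after \autoref{defn:distanceMetric} (any minimizing piecewise smooth curve equals, up to reparametrization, a smooth minimizing geodesic), the curve $\zeta$ must be smooth at the join $t = \ell$.

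However, $\gamma_2$ and $\tilde\gamma_1$ are distinct geodesics that meet at $p$: they are distinct because $\gamma_1 \ne \gamma_2$ as $N$-segments, and $\tilde\gamma_1$ agrees with $\gamma_1$ on $[0,\ell]$. By \autoref{prop:geodesicsIntersectTransversally}, two distinct geodesics meeting at a point cannot share a tangent vector there, so $\dot\gamma_2(\ell) \ne \dot{\tilde\gamma_1}(\ell)$. Consequently $\zeta$ has a genuine corner at $p$, contradicting the smoothness just established. This contradiction forces $p \in \mathrm{Cu}(N)$, finishing the proof.

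There is essentially no serious obstacle here; the argument is a direct transcription of the Riemannian proof, with the only subtlety being to confirm that all the required tools (existence of minimizing extensions from non-cut points, smoothness of length-minimizing piecewise paths, and transversality of distinct Finsler geodesics) are available in the Finsler setting, which they are via \autoref{prop:cutLocusAllSegmentsFail} and \autoref{prop:geodesicsIntersectTransversally}. The one point to state carefully is the use of \autoref{prop:cutLocusAllSegmentsFail} in contrapositive form, namely that $p \notin \mathrm{Cu}(N)$ means no $N$-segment to $p$ terminates as an $N$-segment, so \emph{both} $\gamma_1$ and $\gamma_2$ admit proper $N$-segment extensions past $p$.
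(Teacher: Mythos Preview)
Your proof is correct and follows essentially the same broken-curve argument as the paper. The only cosmetic difference is in how the final contradiction is phrased: you invoke the regularity statement that a piecewise smooth curve realizing the distance must be a smooth geodesic, whereas the paper instead applies a strict triangle inequality at the corner to conclude $d(N,\tilde\gamma_1(\ell+\epsilon)) < \ell+\epsilon$; these are equivalent formulations of the same corner-shortening principle.
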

\begin{proof}
    Let $q \in \mathrm{Se}(N)$, and take two $N$-segments $\gamma_1, \gamma_2 : [0, \ell] \to M$ joining, respectively, $p_1, p_2 \in N$ to $q$. If possible, suppose $q \not\in \mathrm{Cu}(N)$. Then, a sufficiently small extension of $\gamma_2$, say, $\gamma_2 : [0, \ell + \epsilon] \to M$ remains an $N$-segment (\autoref{prop:cutLocusAllSegmentsFail}), and hence, $d(N, \gamma_2(\ell + \epsilon)) = \ell + \epsilon$.
    \begin{figure}[H]
        \centering
    \def\svgwidth{0.5\columnwidth}
    \import{./figures/}{Se_is_contained_in_Cu.pdf_tex}

        \caption{Constructing the broken curve $\eta$.}
        \label{fig:Se_is_contained_in_Cu}
    \end{figure}
    On the other hand $\dot\gamma_1(\ell) \ne \dot\gamma_2(\ell)$ as they are distinct geodesics, and so we have a broken curve $\eta : [0, \ell + \epsilon] \to M$ obtained from concatenating $\gamma_1$ and $\gamma_2|_{[\ell, \ell + \epsilon]}$ (\autoref{fig:Se_is_contained_in_Cu}). In particular, from the triangle inequality, we get 
    \[d(p_1, \eta(\ell + \epsilon)) < d(p_1, \eta(\ell)) + d(\eta(\ell), \eta(\ell + \epsilon)) = \ell + \epsilon.\]
    Consequently, \[d(N, \gamma_2(\ell + \epsilon)) = d(N, \eta(\ell + \epsilon)) \le d(p_1, \eta(\ell + \epsilon)) < \ell + \epsilon.\]
    This is a contradiction and so, $q \in \mathrm{Cu}(N)$.
\end{proof}

In fact, for any point on an $N$-segment not in the cut locus, we have the following uniqueness result.
\begin{prop}\label{prop:uniquenessOfGeodesicBeforeCutPoint}
    Let $N$ be a submanifold of a Finsler manifold $(M, F)$. Let $q \in \mathrm{Cu}(N)$ and $\gamma : [0, \ell] \to M$ be an $N$-segment joining $p \in N$ to $q$. Then, for any $0 < t_0 < \ell$, $\gamma|_{[0, t_0]}$ is the unique $N$-segment joining $N$ to $\gamma(t_0)$.
\end{prop}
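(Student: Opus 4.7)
The plan is to argue by contradiction. Suppose there exists another $N$-segment $\eta : [0, t_0] \to M$ joining $N$ to $\gamma(t_0)$ with $\eta \neq \gamma|_{[0, t_0]}$. Applying \autoref{prop:distanceMinimizingNormal} to $\gamma$ gives $d(N, \gamma(t_0)) = t_0$, so $L(\eta) = t_0$, and $\eta$ is unit-speed.

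Next, I would form the broken curve $\zeta : [0, \ell] \to M$ by concatenating $\eta$ on $[0, t_0]$ with $\gamma|_{[t_0, \ell]}$. Its length is
\[
L(\zeta) \;=\; L(\eta) + L\bigl(\gamma|_{[t_0,\ell]}\bigr) \;=\; t_0 + (\ell - t_0) \;=\; \ell \;=\; d(N, q).
\]
Since $\eta(0) \in N$, we have $d(\eta(0), q) \geq d(N, q) = \ell$, so $\zeta$ realizes the distance between $\eta(0)$ and $q$. By the standard fact noted in the paragraph following \autoref{prop:geodesicEquivalentDefn}, any piecewise smooth minimizer between two points is, up to reparametrization, a smooth geodesic; since both pieces of $\zeta$ are already unit-speed, $\zeta$ itself is smooth at the joining parameter $t_0$, which forces $\dot\eta(t_0) = \dot\gamma(t_0)$.

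Finally, by \autoref{prop:geodesicsIntersectTransversally}, two distinct geodesics that pass through a common point must have distinct velocities there; equivalently, two geodesics agreeing at a point with a common velocity must coincide on their common domain. Therefore $\eta = \gamma|_{[0, t_0]}$, contradicting the assumption, and uniqueness follows.

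The only delicate step is the appeal to smoothness of minimizers: one must use that both $\eta$ and $\gamma|_{[t_0, \ell]}$ are unit-speed parametrizations, so that the underlying smooth geodesic produced by the minimizer-is-smooth theorem agrees with $\zeta$ as a parametrized curve, and hence the continuity of $\dot\zeta$ at $t_0$ genuinely yields $\dot\eta(t_0) = \dot\gamma(t_0)$. Once that identification is secured, the contradiction with \autoref{prop:geodesicsIntersectTransversally} is immediate. Note also that the hypothesis $q \in \mathrm{Cu}(N)$ plays no role in this argument; the conclusion is an intrinsic property of any interior point of an $N$-segment.
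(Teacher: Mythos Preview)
Your argument is correct. The paper's proof takes a slightly different, more modular route: it observes that if a second $N$-segment $\eta$ to $\gamma(t_0)$ existed, then $\gamma(t_0) \in \mathrm{Se}(N) \subset \mathrm{Cu}(N)$ by \autoref{prop:separtingSetContainedInCutLocus}, while the fact that $\gamma$ remains an $N$-segment beyond $t_0$ forces $\gamma(t_0) \notin \mathrm{Cu}(N)$ by \autoref{prop:cutLocusAllSegmentsFail}, a contradiction. Your proof instead unpacks the broken-curve argument directly (the same mechanism underlying those two propositions), concatenating $\eta$ with $\gamma|_{[t_0,\ell]}$ and invoking smoothness of minimizers to force $\dot\eta(t_0)=\dot\gamma(t_0)$. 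The paper's version is shorter because it cashes in the earlier lemmas; yours is self-contained and makes the geometric reason transparent. Your closing remark that the hypothesis $q\in\mathrm{Cu}(N)$ is not actually used is also correct---only that $\gamma$ is an $N$-segment on $[0,\ell]$ with $t_0<\ell$ matters.
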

\begin{proof}
    Suppose, if possible, there exists some $N$-segment $\eta : [0, t_0] \to M$ joining $N$ to $q_0 = \gamma(t_0)$, which is distinct from $\gamma|_{[0, t_0]}$. Then $q_0 \in \mathrm{Se}(N) \subset \mathrm{Cu}(N)$ by \autoref{prop:separtingSetContainedInCutLocus}. On the other hand, $\gamma$ remains an $N$-segment beyond $\gamma(t_0)$, and so $q_0 \not\in \mathrm{Cu}(N)$ by \autoref{prop:cutLocusAllSegmentsFail}. This is a contradiction.
\end{proof}

Let us now recall the following useful result, which is known for the case when $N$ is a point \cite[Proposition 7.16, pg. 89]{Ohta2021}.
\begin{lemma}\label{lemma:beyondFocalPoint}
    Let $\gamma : [0, \ell] \to M$ be a unit speed $N$-geodesic. If $\gamma(T)$ is a focal point of $N$ along $\gamma$ for some $T \in (0, \ell)$, then $d(N, \gamma(t)) < t$ for all $t \in (T, \ell]$. In particular, if $\gamma|_{[0,T]}$ is an $N$-segment, then $\gamma(T) \in \mathrm{Cu}(N)$.
\end{lemma}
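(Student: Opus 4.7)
The plan is to mimic the classical Riemannian argument: a focal point in the interior of $\gamma$ produces a nonzero $N$-Jacobi field vanishing at $T$, which when extended by zero past $T$ yields a proper variation field for $\gamma|_{[0, t_1]}$ ($t_1 > T$) on which the $N$-index form is zero; a standard corner-smoothing then drives the index form strictly negative, so $\gamma|_{[0, t_1]}$ cannot be the shortest path from $N$ to $\gamma(t_1)$.

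Step one: by \autoref{prop:focalLocusJacobiField}, the assumption that $\gamma(T)$ is a focal point furnishes a nonzero $N$-Jacobi field $J$ along $\gamma|_{[0, T]}$ with $J(T) = 0$. Because $J$ solves the second-order Jacobi equation, uniqueness of ODE solutions at $t = T$ implies that $D^{\dot\gamma}_\gamma J(T) \ne 0$ — otherwise $J \equiv 0$. Fix any $t_1 \in (T, \ell]$ and define a piecewise smooth vector field along $\gamma|_{[0, t_1]}$ by
\[
\tilde J(t) = \begin{cases} J(t), & 0 \le t \le T, \\ 0, & T \le t \le t_1. \end{cases}
\]
Then $\tilde J(0) \in T_{\gamma(0)} N$ and $\tilde J(t_1) = 0$, so $\tilde J$ lies in the subspace of $T_{\gamma|_{[0, t_1]}} \mathcal{P}_N$ to which the $N$-index form $\mathcal{I}_{\gamma|_{[0, t_1]}}$ applies.

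Step two: compute $\mathcal{I}_{\gamma|_{[0, t_1]}}(\tilde J, \tilde J)$. Integrating by parts on $[0, T]$ against the Jacobi equation collapses the interior integrand to a boundary term at $t = 0$, and the $N$-Jacobi boundary condition in \autoref{prop:NJacobiEquation} cancels this term precisely against the $\pi^\perp \nabla^{\dot\gamma}_X Y$ contribution in \autoref{defn:NIndexForm}; the contribution from $[T, t_1]$ is zero because $\tilde J$ and $D^{\dot\gamma}_\gamma \tilde J$ vanish there. Hence $\mathcal{I}_{\gamma|_{[0, t_1]}}(\tilde J, \tilde J) = 0$. Now $\tilde J$ has a genuine corner at $T$: the jump in $D^{\dot\gamma}_\gamma \tilde J$ at $T$ is $- D^{\dot\gamma}_\gamma J(T) \ne 0$. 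A standard smoothing argument (as in \cite[Ch.\ III]{Sak96}; in effect, subtract a bump supported in a small neighborhood of $T$ that cancels the jump) produces a smooth $W \in T_{\gamma|_{[0, t_1]}} \mathcal{P}_N$ with $W(0) \in T_{\gamma(0)} N$, $W(t_1) = 0$, and $\mathcal{I}_{\gamma|_{[0, t_1]}}(W, W) < 0$.

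Step three: realize $W$ as the variation field of a proper $N$-variation $\Lambda : (-\epsilon, \epsilon) \times [0, t_1] \to M$ with $\Lambda(s, 0) \in N$ and $\Lambda(s, t_1) = \gamma(t_1)$. The second-variation formula for $E|_{\mathcal{P}_N}$ gives $\frac{d^2}{ds^2}\big|_{s=0} E(\Lambda_s) = \mathcal{I}_{\gamma|_{[0, t_1]}}(W, W) < 0$, while the first variation vanishes since $\gamma$ is an $N$-geodesic. Therefore $E(\Lambda_s) < E(\Lambda_0) = t_1/2$ for small $s \ne 0$, and the Cauchy–Schwarz inequality $L(\Lambda_s)^2 \le 2 t_1 E(\Lambda_s)$ yields $L(\Lambda_s) < t_1$. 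Since $\Lambda_s$ is a curve from $N$ to $\gamma(t_1)$, we conclude $d(N, \gamma(t_1)) < t_1$. The ``in particular'' statement is immediate: if $\gamma|_{[0, T]}$ is already an $N$-segment, then $\gamma|_{[0, T]}$ realizes $d(N, \gamma(T)) = T$, yet for any $t_1 > T$ we have just shown $d(N, \gamma(t_1)) < t_1 = L(\gamma|_{[0, t_1]})$, so the extension ceases to be an $N$-segment, and \autoref{defn:cutLocusN} places $\gamma(T)$ in $\mathrm{Cu}(N)$.

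The main obstacle will be the index-form bookkeeping in Step two: unlike the point-cut-locus case, the $N$-index form carries the extra boundary contribution $-g_{\dot\gamma(0)}(\pi^\perp \nabla^{\dot\gamma(0)}_X Y, \dot\gamma(0))$ at $t = 0$, and one must verify that this term is exactly what the $N$-Jacobi boundary condition $\pi D^{\dot\gamma}_\gamma J(0) = \pi \nabla^{\dot\gamma}_J \dot\gamma(0)$ eliminates after the integration by parts; one must also confirm that the corner-smoothing near $T$ leaves the field unchanged near $0$ so that this cancellation persists. Once that accounting is in place, the rest reduces to the usual second-variation inequality.
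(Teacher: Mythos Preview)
Your proof is correct and follows essentially the same strategy as the paper's: extend a nonzero $N$-Jacobi field vanishing at $T$ by zero, observe that its $N$-index form is zero, perturb to make the index strictly negative, and apply the second variation formula. The only cosmetic difference is that the paper produces the negative-index field by replacing the extended Jacobi field on a short interval $[T-\epsilon,T+\epsilon]$ with the Jacobi field sharing its boundary values and then invoking the index comparison from \cite[Lemma 4.2]{Peter06}, whereas you achieve the strict inequality via a direct corner-perturbation near $T$; both devices are standard and interchangeable.
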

\begin{proof}
    Without loss of generality, we may assume that $\gamma(T)$ is the first focal point of $N$ along $\gamma$, and we show that $\gamma$ fails to be distance minimizing.
    
    First, let us choose $\epsilon > 0$ so that $\gamma|_{[T- \epsilon, T+\epsilon]}$ has no pair of conjugate points. Indeed, we can choose a strictly geodesically convex neighborhood, say, $U$ around $\gamma(T)$ so that any two points in $U$ can be joined by a unique minimizer which is furthermore completely contained in $U$. Let us choose $\epsilon > 0$ so that $\eta \coloneqq \gamma|_{[T-\epsilon, T+\epsilon]}$ is contained in $U$. Since by our assumption $\gamma$ is an $N$-segment beyond $T$, this segment $\eta$ is a minimizer between any pair of points on $\eta$, which is unique by the convexity of $U$. Now, if there is a pair of conjugate points on $\eta$, then $\eta$ fails to be distance minimizing between them \cite[Prop. 7.4.1]{Bao2000}, which is a contradiction. Thus, there are no pairs of conjugate points along $\gamma|_{[T-\epsilon, T+ \epsilon]}$.
    
    Now, by \autoref{prop:focalLocusJacobiField}, let $J_{1}$ be a non-zero $N$-Jacobi field along $\gamma|_{[0, T]}$ satisfying $J_{1}(T) = 0$. Note that the index form $I(J_1, J_1) = 0$. We extend $J_1$ by $0$ to all of $\gamma$, and clearly $I(J_1, J_1) = I(J_1|_{[0, T]}, J_1|_{[0, T]}) = 0$. Since there are no pairs of conjugate points, choose the (unique) Jacobi field $J_{2}$ along $\gamma|_{[T-\epsilon, T+\epsilon]}$ satisfying $J_{2}(T-\epsilon) = J_{1}(T-\epsilon)$ and $J_{2}(T+\epsilon)= 0 = J_{1}(T + \epsilon)$. Clearly $J_{2} \ne 0$ on $[T-\epsilon, T+\epsilon)$, as $J_2(s) = 0$ for some $T-\epsilon \le s < T + \epsilon$ will imply $\gamma(s)$ is conjugate to $\gamma(T+\epsilon)$ along $\gamma$, a contradiction. Next, consider the piecewise smooth vector field $J$ along $\gamma$ defined as follows 
    \[J(t) = \begin{cases}
        J_{1}(t), &t \in [0, T-\epsilon] \\
        J_{2}(t), &t \in [T-\epsilon, T + \epsilon] \\
        0, & t \in [T + \epsilon, \ell].
    \end{cases}\]
    Note that $J$ is different from $J_1$ only on $(T-\epsilon, T+ \epsilon)$. As $J|_{[T-\epsilon, T+\epsilon]} = J_2$ is a non-zero Jacobi field, we have from \cite[Lemma 7.14, pg. 88]{Ohta2021}
    \[I(J, J) - I(J_1, J_1) = I\left( J_2, J_2 \right) - I\left( J_1|_{[T-\epsilon, T+\epsilon]}, J_1|_{[T-\epsilon, T+\epsilon]} \right) < 0 \Rightarrow I(J, J) < I(J_1, J_1) = 0.\]
    Now, suppose $\gamma_{s} : [0, T] \rightarrow M$ is a $J$-variation of $\gamma$, so that $\gamma_0 = \gamma$. Since $\gamma$ is a geodesic, we have $E^{\prime}(0) = \frac{d}{d s}|_{s=0} E(\gamma_s) = 0$, and then it follows that $E^{\prime\prime}(0) = \frac{d^2}{d s^2}|_{s = 0} E(\gamma_s) = I(J,J) < I(J_{1}, J_{1}) = 0$. Hence, for any $s \ne 0$ sufficiently near 0, we have $E(\gamma_s) < E(\gamma_0)$. As $\gamma$ is a unit-speed geodesic, we have 
    \[L(\gamma) = L(\gamma_0) = \sqrt{\ell E(\gamma_0)} > \sqrt{\ell E(\gamma_s)} \ge L(\gamma_s).\]
    Thus, $\gamma$ fails to be distance minimizing beyond $T$, which concludes the proof.
\end{proof}

Recall that we have $\rho$ as the cut time (\autoref{defn:cutTime}), and $\lambda$ as the focal time (\autoref{defn:focalTime}) map. Then, as an immediate corollary, we have the following. 
\begin{corollary}\label{cor:cutTimeLessThanFocalTime}
    $\rho(\mathbf{v}) \le \lambda(\mathbf{v})$ for all $\mathbf{v}\in S(\nu)$.
\end{corollary}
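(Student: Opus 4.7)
The claim is an immediate corollary of \autoref{lemma:beyondFocalPoint}, and the proposal is essentially just to spell out how the lemma delivers the inequality. If $\lambda(\mathbf{v}) = \infty$ there is nothing to prove, so assume $\lambda(\mathbf{v}) < \infty$ and set $T = \lambda(\mathbf{v})$. By definition of the first focal time (\autoref{defn:focalTime}), the differential $d(\exp^\nu)|_{T\mathbf{v}}$ is degenerate, so $\gamma_\mathbf{v}(T)$ is a focal point of $N$ along the unit-speed $N$-geodesic $\gamma_\mathbf{v}(t) = \exp^\nu(t\mathbf{v})$.

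Since $(M,F)$ is forward complete, $\gamma_\mathbf{v}$ is defined on all of $[0,\infty)$, so for any fixed $t_0 > T$ we may regard $\gamma_\mathbf{v}$ as an $N$-geodesic on $[0, t_0]$ with $T \in (0, t_0)$. Applying \autoref{lemma:beyondFocalPoint} to this restriction yields $d(N, \gamma_\mathbf{v}(t)) < t$ for all $t \in (T, t_0]$, and in particular $d(N, \gamma_\mathbf{v}(t_0)) < t_0$. Since $t_0 > T$ was arbitrary, we have
\[
d(N, \gamma_\mathbf{v}(t)) < t \qquad \text{for every } t > \lambda(\mathbf{v}).
\]

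By \autoref{defn:cutTime}, $\rho(\mathbf{v})$ is the supremum of those $t$ for which $d(N, \gamma_\mathbf{v}(t)) = t$, and the display above shows this supremum cannot exceed $\lambda(\mathbf{v})$. Hence $\rho(\mathbf{v}) \le \lambda(\mathbf{v})$, as required. There is no real obstacle here; the only mild point is to note that one applies the lemma to successively longer restrictions of the globally defined $N$-geodesic $\gamma_\mathbf{v}$, which is legitimate by forward completeness.
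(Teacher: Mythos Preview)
Your proof is correct and follows precisely the approach the paper intends: the paper states the corollary as an immediate consequence of \autoref{lemma:beyondFocalPoint} without further argument, and you have simply spelled out the straightforward deduction. The only very minor point one might add is that the infimum defining $\lambda(\mathbf{v})$ is actually attained (since degeneracy of $d(\exp^\nu)|_{t\mathbf{v}}$ is a closed condition in $t$), which justifies asserting that $\gamma_\mathbf{v}(T)$ is itself a focal point; but this is standard and does not affect the validity of your argument.
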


The next result is well-known in the Riemannian setting \cite{BaPr21}\footnote{During the review process of this article, it came to the authors' attention that \autoref{thm:cutPointClassification} and \autoref{thm:rhoContinuous} are also proved in \cite[Prop. 5.1, 5.2]{Wu14} under the compactness hypothesis on $N$ (also see \autoref{rmk:distanceNotAchieved}).}.
\begin{thm}\label{thm:cutPointClassification}
    Let $N$ be a closed submanifold of a forward complete Finsler manifold $(M, F)$. Additionally assume (\hyperref[eq:hypothesisH]{H}) holds. Then, $q \in \mathrm{Cu}(N)$ if and only if $\gamma : [0, \ell] \to M$ is an $N$-segment with $q = \gamma(\ell)$ and at least one of the following holds true.
    \begin{enumerate}
        \item There exist two distinct $N$-segments joining $N$ to $q$, i.e, $q \in \mathrm{Se}(N)$, or
        \item $q$ is the first focal point of $N$ along $\gamma$.
    \end{enumerate}
\end{thm}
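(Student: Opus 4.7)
The backward direction is a direct chain of previously established facts. If $q \in \mathrm{Se}(N)$, then \autoref{prop:separtingSetContainedInCutLocus} gives $q \in \mathrm{Cu}(N)$. If some $N$-segment $\gamma : [0, \ell] \to M$ has $q = \gamma(\ell)$ as its first focal point along $\gamma$, then \autoref{lemma:beyondFocalPoint} yields $q \in \mathrm{Cu}(N)$ at once.

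For the forward direction, suppose $q \in \mathrm{Cu}(N)$. By \autoref{prop:existenceOfNSegements}, fix any $N$-segment $\gamma : [0, \ell] \to M$ with $\gamma(\ell) = q$. If $q$ is a focal point of $N$ along $\gamma$, we are done (and it is automatically the \emph{first} such, since any earlier focal point on $(0,\ell)$ would contradict $\gamma$ being an $N$-segment by \autoref{lemma:beyondFocalPoint}). So assume $q$ is not a focal point of $N$ along $\gamma$; the plan is to manufacture a second $N$-segment to $q$ by approximation from beyond. Choose $t_n \downarrow \ell$ with $t_n > \ell$. By \autoref{prop:cutLocusAllSegmentsFail}, $\gamma$ fails to minimize past $\ell$, so $\ell_n \defeq d(N, \gamma(t_n)) < t_n$, while the triangle inequality forces $\ell_n \to \ell$. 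A second application of \autoref{prop:existenceOfNSegements} supplies $N$-segments $\eta_n : [0, \ell_n] \to M$ with $\eta_n(\ell_n) = \gamma(t_n)$; set $p_n \defeq \eta_n(0) \in N$ and $\mathbf{v}_n \defeq \dot\eta_n(0) \in S(\nu_{p_n})$.

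After a compactness step (addressed below), pass to a convergent subsequence $\mathbf{v}_n \to \mathbf{v} \in S(\nu)$ based at some $p_\infty \in N$. Continuity of $\exp^\nu$ then makes $\eta(t) \defeq \exp^\nu(t\mathbf{v})$ an $N$-geodesic with $\eta(\ell) = q$, and since $L(\eta|_{[0,\ell]}) = \ell = d(N,q)$ it is in fact an $N$-segment to $q$. Assume for contradiction that $\eta = \gamma$; then $\mathbf{v} = \dot\gamma(0)$, and both $\ell_n \mathbf{v}_n$ and $t_n \dot\gamma(0)$ converge in $\hat\nu$ to $\ell \dot\gamma(0)$ while $\exp^\nu$ maps both to the common point $\gamma(t_n)$. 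By the non-focal assumption, $d(\exp^\nu|_{\hat\nu})$ is an isomorphism at $\ell \dot\gamma(0)$ (both source and target have dimension $\dim M$), so $\exp^\nu|_{\hat\nu}$ is a local diffeomorphism there; for large $n$ this forces $\ell_n \mathbf{v}_n = t_n \dot\gamma(0)$. Taking $F$-norms yields $\ell_n = t_n$, contradicting $\ell_n < t_n$. Hence $\eta \ne \gamma$, placing $q \in \mathrm{Se}(N)$.

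The main obstacle is the compactness of $\{\mathbf{v}_n\}$. In the asymmetric Finsler setting, forward completeness only compactifies forward balls, whereas the natural bound $d(p_n, q) \le \ell_n + (t_n - \ell)$ traps $p_n$ in a backward-type neighborhood of $q$. I would deal with this by combining closedness of $N$ with a properness argument for $\exp^\nu$ on $\{\mathbf{w} \in \nu : F(\mathbf{w}) \le \ell + 1\}$: since each $\eta_n$ is a forward geodesic of bounded length ending at the convergent sequence $\gamma(t_n)$, its starting footpoint cannot escape to infinity within the closed submanifold $N$, confining $\{p_n\}$ and hence $\{\mathbf{v}_n\}$ to a compact subset of $S(\nu)$.
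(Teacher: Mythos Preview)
Your argument is essentially the paper's own, with the logical structure flipped: the paper assumes $q \notin \mathrm{Se}(N)$ and shows $q$ is focal, whereas you assume $q$ is not focal and force $q \in \mathrm{Se}(N)$. The engine is identical in both---a sequence of $N$-segments to points $\gamma(t_n)$ just past $q$, a subsequential limit in $S(\nu)$, and the inverse function theorem linking local injectivity of $\exp^\nu$ at $\ell\dot\gamma(0)$ to non-focality. Your extraction of the contradiction via $F(\ell_n\mathbf{v}_n) = F(t_n\dot\gamma(0)) \Rightarrow \ell_n = t_n$ is a clean variant of the paper's observation that $\exp^\nu$ fails to be injective on every neighborhood of $\ell\dot\gamma(0)$.

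On the compactness point you flag: the paper does not attempt the properness argument you sketch; it simply writes ``Since $N$ is compact, $S(\nu)$ is compact'' in the proof, i.e.\ it reads ``closed submanifold'' as ``compact without boundary'' and takes the subsequential limit for free. Your proposed workaround for genuinely non-compact closed $N$ is not complete as written: in a Finsler manifold that is only forward complete, unit-speed geodesics of bounded length terminating in a compact set can have initial points escaping to infinity---this is precisely what the failure of backward completeness allows---so ``cannot escape to infinity'' needs a real justification or an added hypothesis (e.g.\ $N$ compact, or $F$ reversible, or forward-and-backward completeness). Since the paper itself works under the compact reading, your proof is at least as complete as the original once you make that same reading explicit.
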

\begin{proof}
    Suppose $q \in \mathrm{Se}(N)$. Then by \autoref{prop:separtingSetContainedInCutLocus}, we have $q \in \mathrm{Cu}(N)$. On the other hand, suppose $q$ is the first focal point of $N$ along $\gamma$, and then by \autoref{lemma:beyondFocalPoint}, any extension of $\gamma$ fails to be distance minimizing from $N$. As $\gamma$ is an $N$-segment, we have $q \in \mathrm{Cu}(N)$.

    \begin{figure}[H]
        \centering
    \def\svgwidth{0.5\columnwidth}
    \import{./figures/}{characterization_of_Cu-1.pdf_tex}

        \caption{Characterization of $\mathrm{Cu}(N)$}
        \label{fig:characterization_of_Cu-1}
    \end{figure}

    In order to prove the converse, let $q \in \mathrm{Cu}(N) \setminus \mathrm{Se}(N)$. Then, there exists a unique unit-speed $N$-segment $\gamma : [0,\ell] \to M$ joining $p = \gamma(0) \in N$ to $q = \gamma(\ell)$, where $\ell = d(N, q)$. Furthermore, any extension of $\gamma$ fails to be distance minimizing from $N$. Say, $\mathbf{v} = \dot \gamma(0) \in S(\nu)$, and then $\gamma(t) = \exp^\nu(t \mathbf{v})$. Let $q_i = \gamma\left( \ell + \frac{1}{i} \right)$. Then, $q_i \to q$ and $d(N, q_i) < L\left( \gamma|_{\left[ 0, \ell + \frac{1}{i} \right]} \right)$. Let $\gamma_i : [0, \ell_i] \to M$ be unit-speed $N$-segments joining $p_i = \gamma_i(0) \in N$ to $q_i = \gamma_i(\ell_i)$, where $\ell_i = d(N, q_i)$, and $\mathbf{v}_i = \dot\gamma_i(0) \in S(\nu_{p_i})$ (see \autoref{fig:characterization_of_Cu-1}). We have $\ell_i = d(N, q_i) \rightarrow d(N, q) = \ell$. Then, by \autoref{prop:existenceOfNSegements}, we have a subsequence $\gamma_i \rightarrow \gamma^\prime$, where $\gamma^\prime : [0, \ell] \rightarrow M$ is an $N$-segment joining $N$ to $q$. But then $\gamma = \gamma^\prime$, as $q \not \in \mathrm{Se}(N)$. In particular, $(p_i, \mathbf{v}_i) \rightarrow (p, \mathbf{v}) \Rightarrow \ell_i \mathbf{v}_i \rightarrow \ell \mathbf{v}$.  As $\gamma|_{\left[ 0, \ell + \frac{1}{i} \right]}$ is not an $N$-segment, $\gamma_i$ is distinct from $\gamma|_{\left[ 0, \ell + \frac{1}{i} \right]}$, and in particular, $\mathbf{v}_i \ne \mathbf{v} \Rightarrow \ell_i \mathbf{v}_i \ne \left( \ell + \frac{1}{i} \right)\mathbf{v}$. On the other hand,
    \[\exp^\nu(\ell_i \mathbf{v}_i) = \gamma_i(\ell_i) = q_i = \exp^\nu \left( \left(\ell + \frac{1}{i}\right)\mathbf{v} \right).\]
    Thus, the normal exponential map fails to be injective in any neighborhood of $\ell \mathbf{v}$. As a consequence of the inverse function theorem, we then have $d(\exp^\nu)|_{\ell \mathbf{v}}$ is degenerate, i.e., $q = \exp^\nu(\ell \mathbf{v})$ is a focal point of $N$ along $\gamma$. As $\gamma$ is an $N$-segment joining $N$ to $q$, it follows from \autoref{lemma:beyondFocalPoint} that $\gamma(t)$ is not a focal point of $N$ along $\gamma$ for $0 \le t < T$. Thus, $q$ is the first focal point of $N$ along $\gamma$. This completes the proof.
\end{proof}

\subsection{Continuity of the \texorpdfstring{$\rho$}{rho} map}
Recall the cut time map $\rho$ from \autoref{defn:cutTime}. We have the following result.

\begin{thm}\label{thm:rhoContinuous}
    Let $N$ be a closed submanifold of a forward complete Finsler manifold $(M, F)$. Suppose (\hyperref[eq:hypothesisH]{H}) holds. Then, the map $\rho : S(\nu) \to [0,\infty]$ is continuous.
\end{thm}
\begin{proof}
    Consider a converging sequence $(p_i, \mathbf{v}_i) \to (p, \mathbf{v})$ in $S(\nu)$. We show that $\rho(p_i,\mathbf{v}_i) \to \rho(p,\mathbf{v})$. Let $\ell_i = \rho(p_i, \mathbf{v}_i)$, and consider the $N$-segments $\gamma_i : [0, \ell_i] \to M$ starting at $p_i = \gamma_i(0)$, given by $\gamma_i(t) = \exp^\nu(t \mathbf{v}_i)$. If $\ell_i \ne \infty$, then we set $q_i = \gamma_i(\ell_i)$. From the definition of $\rho$, we have $\gamma_i$ is an $N$-segment for each $i$. Since $\rho(p_i, \mathbf{v}_i) \in [0,\infty]$, we must have some accumulation point, say, $ L_0  \in [0, \infty]$ for the set $\{ \rho(p_i, \mathbf{v}_i)\}$.
    
    Suppose, $L_0 = \infty$. Consider the $N$-geodesic $\gamma : [0, \infty) \to M$ defined as $\gamma(t) = \exp^\nu(t\mathbf{v})$. Passing to a subsequence, we have $\ell_i = \rho(p_i,\mathbf{v}_i) \to \infty$, and by assumption $\mathbf{v}_i = \dot \gamma_i(0) \to \mathbf{v} = \dot \gamma(0)$. But then, for any $c \ge 0$, we have $\ell_i = \rho(p_i, \mathbf{v}_i) \ge c$ for $i$ large, and hence
    \[d(N, \gamma(c)) = d(N, \exp^\nu(c \mathbf{v})) = \lim d(N, \exp^\nu(c \mathbf{v}_i)) = \lim d(N, \gamma_i(c)) = c.\]
    Thus, $\gamma$ is an $N$-segment for all time. By definition of $\rho$, we have $\rho(p, \mathbf{v}) = \infty = L_0$, and we are done.
        
    Suppose $ L_0  < \infty$, and so for infinitely many $(p_i,\mathbf{v}_i)$, we have $\ell_i < \infty$ and $q_i = \exp^\nu(\ell_i \mathbf{v}_i)$ is a cut point of $N$. It follows from \autoref{thm:cutPointClassification} that there are infinitely many $(p_i,\mathbf{v}_i)$ which satisfy one of the following.
    \begin{enumerate}[\quad(a)]
        \item \label{thm:rhoContinuous:case:firstFocalPoint} $q_i = \gamma_i(\ell_i) = \exp^\nu(\ell_i \mathbf{v}_i)$ is the first focal point of $N$ along $\gamma_i$.
        \item \label{thm:rhoContinuous:case:twoSegments} There exists some $N$-segment ending at $q_i$ distinct from $\gamma_i$.
    \end{enumerate}

    If (\hyperref[thm:rhoContinuous:case:firstFocalPoint]{a}) holds for infinitely many $(p_i,\mathbf{v}_i)$, then $\ker \left( d(\exp^\nu)|_{\ell_i \mathbf{v}_i} \right) \ne 0$. Since $\lim \ell_i =  L_0 $ is finite, passing to a subsequence, we may assume that there exists some $\epsilon > 0$ so that $L_0 - \epsilon < \ell_i <  L_0 + \epsilon$ for all $i$. Furthermore, we also assume that $p_i \in \overline{B_+(p, \epsilon)}$, which is compact by Hopf-Rinow theorem. Consider the subset \[\widetilde{\nu} \coloneqq \{ (a,\mathbf{v}) \in \nu \;|\; q\in \overline{B_+(p, \epsilon)}, \; L_0 -\epsilon \le F_p(\mathbf{v}) \le  L_0  + \epsilon \}.\]
    Clearly, $\widetilde{\nu}$ is a compact subset of $\hat{\nu}$, and furthermore, $\ell_i \mathbf{v}_i \in \widetilde{\nu}$. Let us now fix some auxiliary Riemannian metric, say, $g$ on $\hat{\nu}$, and choose unit vectors $\mathbf{w}_i \in \ker \left( d (\exp^\nu)|_{\ell_i \mathbf{v}_i} \right)$. Now, $\mathbf{w}_i$ is in the sub-bundle 
    \[E \coloneqq \bigcup_{(q,\mathbf{v}) \in \widetilde{\nu}} S^g\left( T_{(q,\mathbf{v})} \hat{\nu} \right) \subset S^g(T\hat{\nu}),\]
    where $S^g(.)$ denotes the unit sphere bundle with respect to $g$. As $E$ is compact, passing to a further subsequence, we may assume $\mathbf{w}_i \to \mathbf{w} \in E$. Then $0 = \lim d(\exp^\nu)|_{\ell_i \mathbf{v}_i} ( \mathbf{w}_i ) = d(\exp^\nu)|_{ L_0  \mathbf{v}}(\mathbf{w})$. As $\mathbf{w} \ne 0$, $q = \gamma(T) = \exp^\nu( L_0  \mathbf{v})$ is a focal point of $N$ along $\gamma$. Hence, any extension of $\gamma$ beyond $ L_0 $ fails to be distance minimizing (\autoref{lemma:beyondFocalPoint}), and we get $\rho(p,\mathbf{v}) =  L_0 $.

    If (\hyperref[thm:rhoContinuous:case:twoSegments]{b}) holds for infinitely many $(p_i, \mathbf{v}_i)$, there exists $(p_i^\prime, \mathbf{v}_i^\prime) \in S(\nu)$ so that the geodesic $\eta_i : [0,\ell_i] \to M$ defined by $\eta_i(t) = \exp^\nu(t \mathbf{v}_i^\prime)$ is distinct from $\gamma_i$, and is an $N$-segment satisfying $q_i = \eta_i(\ell_i)$. Since $q_i = \exp^\nu(\ell_i \mathbf{v}_i) \rightarrow \exp^\nu(L_0 \mathbf{v}) = q$, by \autoref{prop:existenceOfNSegements}, we have a subsequence $\eta_i \rightarrow \eta$, where $\eta : [0, L_0] \rightarrow M$ is an $N$-segment, given by $\eta(t) = \exp^\nu(t \mathbf{v}^\prime)$, for $\mathbf{v}^\prime = \dot \eta(0) \in S(\nu_{p^\prime})$. In particular, $(p_i, \mathbf{v}_i) \rightarrow (p^\prime, \mathbf{v}^\prime)$. If $\mathbf{v}^\prime \ne \mathbf{v}$, then we have $q \in \mathrm{Se}(N) \subset \mathrm{Cu}(N)$ (\autoref{prop:separtingSetContainedInCutLocus}), and we get $\rho(p,\mathbf{v}) =  L_0 $. Next, suppose $\mathbf{v}^\prime = \mathbf{v}$ and so, $\eta = \gamma$. But then $\exp^\nu$ fails to be injective in any neighborhood of $ L_0 \mathbf{v}$. Indeed, we have $(p_i^\prime, \mathbf{v}_i^\prime) \ne (p_i, \mathbf{v}_i) \Rightarrow \ell_i \mathbf{v}_i^\prime \ne \ell_i \mathbf{v}_i$, both of which are getting mapped to $q_i$ under the exponential map, and converging to $L_0 \mathbf{v}$. In particular, as a consequence of the inverse function theorem, $d(\exp^\nu)|_{ L_0  \mathbf{v}}$ is non-degenerate. Thus, $q = \exp^\nu( L_0  \mathbf{v})$ is a focal point of $N$ along $\gamma$. As $\gamma$ is an $N$-segment, by \autoref{lemma:beyondFocalPoint}, we get $\rho(p,\mathbf{v}) = L_0$.

    Thus, for any accumulation point $ L_0 $ of $\{\rho(p_i,\mathbf{v}_i)\}$ we have $ L_0  = \rho(p,\mathbf{v})$. In particular, $\rho(p_i,\mathbf{v}_i) \to \rho(p,\mathbf{v})$, which completes the proof.
\end{proof}

We can now prove the following.

\begin{thm}\label{thm:separatingSetDense}
    Let $N$ be a closed submanifold of a Finsler manifold $(M, F)$. Furthermore, assume (\hyperref[eq:hypothesisH]{H}) holds. Then, $\mathrm{Cu}(N) = \overline{\mathrm{Se}(N)}$.
\end{thm}
\begin{proof}
    Let $q \in \mathrm{Cu}(N) \setminus \mathrm{Se}(N)$. Then, there exists a unique $N$-segment $\gamma : [0, \ell] \to M$ joining $p = \gamma(0) \in N$ to $q= \gamma(\ell)$, and furthermore by \autoref{thm:cutPointClassification}, $q$ is the first focal point of $N$ along $\gamma$. We have $q = \exp^\nu(\ell \mathbf{v})$, where $\mathbf{v} = \dot \gamma(0) \in \nu_p$, and $\ell = \rho(\mathbf{v}) \ne \infty$.

    Fix some auxiliary Riemannian metric on $\hat{\nu} = \nu \setminus \{ \mathbf{0} \} $, which is a smooth manifold. Then, for some $\delta > 0$ sufficiently small, consider an open metric ball $V(\mathbf{v},\delta)$ of radius $\delta$ around $\mathbf{v}$ in $\nu$ (with respect to the auxiliary metric), with the closure denoted as $\overline{V(\mathbf{v},\delta)}$, so that
    \begin{enumerate}[\quad(i)]
        \item $\overline{V(\mathbf{v}, \delta)} \cap S(\nu)$ is homeomorphic to a closed Euclidean $(n-1)$-ball (\autoref{rmk:unitConeIsCompact}), where $n = \dim M$, and
        \item $\rho(\mathbf{w}) \ne \infty$ for all $\mathbf{w} \in \overline{V(\mathbf{v}, \delta)} \cap S(\nu)$, which can be justified from the continuity of $\rho$ (\autoref{thm:rhoContinuous}).
    \end{enumerate} 
    Define the set 
    \[\overline{U(\mathbf{v}, \delta)} = \left\{ \rho(\mathbf{w}) \mathbf{w} \;\middle|\; \mathbf{w} \in \overline{V(\mathbf{v},\delta)} \cap S(\nu) \right\}.\]
    As $\rho$ is finite and continuous, we have a surjective continuous map $\Phi: \overline{V(\mathbf{v}, \delta)} \cap S(\nu) \rightarrow \overline{U(\mathbf{v}, \delta)}$ defined as $\Phi(\mathbf{w}) = \rho(\mathbf{w})\mathbf{w}$ (see \autoref{fig:SeIsDense}).  Clearly, $\Phi$ is injective as well. Hence, by the invariance of domain, we have $\overline{U(\mathbf{v}, \delta)}$ is a closed Euclidean $(n-1)$-ball. Next, consider the set 
    \[\overline{\mathcal{U}(\mathbf{v}, \delta)} = \left\{ t \mathbf{x} \;\middle|\; \mathbf{x} \in \overline{U(\mathbf{v}, \delta)}, \; 0 \le t \le 1 \right\}.\]
    Then, $\overline{\mathcal{U}(\mathbf{v}, \delta)}$ is homeomorphic to the product of a closed Euclidean $(n-1)$-ball with the interval $[0, 1]$, which in turn is homeomorphic to a closed Euclidean $n$-ball. Taking a decreasing sequence $\delta_k \rightarrow 0$, we thus have a decreasing sequence of closed $n$-balls $\overline{\mathcal{U}}_k \coloneqq \overline{\mathcal{U}(\mathbf{v}, \delta_k)}$. Note that the point $\mathbf{u} = \ell \mathbf{v} = \Phi(\mathbf{v})$ lies on the boundary of $\overline{\mathcal{U}}_k$, and furthermore $\cap_k \overline{V(\mathbf{v}, \delta_k)} = \left\{ \mathbf{v} \right\}$. We now have a dichotomy.
    \begin{enumerate}[\quad(a)]
        \item \label{thm:separatingSetDense:case:injective} For some $\delta > 0$ small, $\exp^\nu|_{\overline{\mathcal{U}(\mathbf{v}, \delta)}}$ is injective, and consequently, again by an appeal to the invariance of domain, $W \coloneqq \exp^\nu\left( \overline{\mathcal{U}}_{k_0} \right)$ is a closed Euclidean $n$-ball in $M$ for some $k_0$ large.
        
        \item \label{thm:separatingSetDense:case:neverInjective} For any $\delta > 0$, $\exp^\nu|_{\overline{\mathcal{U}(\mathbf{v}, \delta)}}$ is not injective.
    \end{enumerate}

    If (\hyperref[thm:separatingSetDense:case:injective]{a}) appears, then we shall show that it will lead to a contradiction. Consider the open forward ball $B_{+}(q, \epsilon)$ around the point $q \in M$, which is homeomorphic to an open $n$-ball for $\epsilon > 0$ sufficiently small. Since $W$ is a closed Euclidean $n$-ball with $q = \exp^\nu(\mathbf{u})$ on the boundary, for $\epsilon$ sufficiently small we must have $B_+(q,\epsilon)$ intersects the complement $W$. Also, by \cite{Alves2019}, for $\epsilon$ sufficiently small $B_+(q, \epsilon) \cap N = \emptyset$ as well. Taking $\epsilon_i = \frac{1}{i}$ for $i$ large, we now pick points $q_i \in B_+(q, \epsilon_i) \setminus (W \cup N)$. Observe that $q_i \rightarrow q$. Let us consider $N$-segments $\gamma_i : [0, \ell_i] \rightarrow  M$ joining $p_i = \gamma_i(0)$ to $q_i = \gamma _i(\ell_i)$, with initial velocity $\mathbf{v}_i = \dot\gamma_i(0) \in S(\nu)$ and $\ell_i = d(N, q_i)$. Clearly, we have 
    \[\ell = d(N, q) = \lim d(N, q_i) = \lim \ell_i.\]
    By \autoref{prop:existenceOfNSegements}, a subsequence $\gamma_i \rightarrow \eta$, where $\eta : [0,\ell] \rightarrow M$ is an $N$-segment joining $N$ to $q$, given by $\eta(t) = \exp^\nu(t \mathbf{v}^\prime)$, where $\mathbf{v}^\prime = \dot\gamma(0) \in S(\nu_{p^\prime})$. As $q \in \mathrm{Cu}(N) \setminus \mathrm{Se}(N)$, we must have $\gamma$ and $\eta$ are the same. In particular, $(p, \mathbf{v}) = (p^\prime, \mathbf{v}^\prime)$. As $\mathbf{v}_i \rightarrow \mathbf{v}$, for some $i$ sufficiently large, we must have $\mathbf{v}_i \in V_{k_0} = V\left( \mathbf{v}, \frac{1}{k_0} \right)$. But then , $\ell_i \mathbf{v}_i \in \overline{\mathcal{U}}_{k_0}$, since $\ell_i = d(N, q_i) \le \rho(\mathbf{v}_i)$. This implies, $q_i = \exp^{\nu}(\ell_i \mathbf{v}_i) \in W$, a contradiction. Therefore, (\hyperref[thm:separatingSetDense:case:injective]{a}) cannot appear.

    \begin{figure}[!htbp]
        \centering
    \def\svgwidth{1\columnwidth}
    \import{./figures/}{SeIsDense-1.pdf_tex}

        \caption{The open sets in $\nu$ and in $M$}
        \label{fig:SeIsDense}
    \end{figure}

    If (\hyperref[thm:separatingSetDense:case:neverInjective]{b}) appears, then for each $k$ we can pick $\mathbf{u}_k^1 \ne \mathbf{u}_k^2 \in \overline{\mathcal{U}}_k$, so that 
    \[\exp^\nu(\mathbf{u}_k^1) = z_k = \exp^\nu(\mathbf{u}_k^2).\]
    For $j = 1,2$, let us write, $\mathbf{u}^j_k = c^j_k \mathbf{v}^j_k$, for $\mathbf{v}^j_k \in \overline{V(\mathbf{v}, \delta_k)} \cap S(\nu)$, and $c^j_k \le \rho(\mathbf{v}^j_k)$. But $z_k \in \mathrm{Se}(N)$, and so $c^j_k = \rho(\mathbf{v}^j_k)$.  As $\cap_k \overline{V(\mathbf{v}, \delta_k)} = \left\{ \mathbf{\mathbf{v}} \right\}$, we have $\mathbf{v}_k^j \rightarrow  \mathbf{v}$. By the continuity of $\rho$, we have $c^j_k = \rho(\mathbf{v}^j_k) \rightarrow \rho(\mathbf{v}) = \ell$. Consequently, $\mathbf{u}^j_k = c^j_k \mathbf{v}^j_k \rightarrow \ell \mathbf{v} = \mathbf{u}$, for $j = 1,2$. Then, it follows from the continuity of $\exp^\nu$ that $z_k = \exp^\nu(\mathbf{v}^j_k) \rightarrow  \exp^\nu(\mathbf{u}) = q$. As $z_k \in \mathrm{Se}(N)$, we have $q$ is in the closure of $\mathrm{Se}(N)$, proving that $\mathrm{Cu}(N) \subset \overline{\mathrm{Se}(N)}$.

    Now, suppose $q \in \overline{\mathrm{Se}(N)} \setminus \mathrm{Se}(N)$. Consider the unique $N$-segment $\gamma : [0, \ell] \rightarrow M$ joining $N$ to $q = \gamma(\ell)$, where $\ell = d(N, q)$. Let $q_i \in \mathrm{Se}(N)$ so that $q_i \rightarrow q$. Consider two distinct $N$-segments, $\eta^1_i, \eta^2_i : [0, \ell_i] \rightarrow M$, joining $N$ to $q_i$. Suppose $\eta^j_i(t) = \exp^\nu(t \mathbf{v}^j_i)$, where $\ell_i = d(N, q_i)$ and $\mathbf{v}_i^1 \ne \mathbf{v}_i^2 \in S(\nu)$. Now, $\ell_i = d(N, q_i) \rightarrow d(N, q) = \ell$. Hence, by \autoref{prop:existenceOfNSegements}, we have a subsequence $\eta^j_i \rightarrow \eta^j$, where $\eta^j : [0, \ell] \rightarrow M$ is an $N$-segment joining $N$ to $q$. But as $q \not \in \mathrm{Se}(N)$, we have $\eta^1 = \gamma = \eta^2$. In particular, we get $\ell_i \mathbf{v}_i^j \rightarrow \ell \mathbf{v}$ for $j = 1, 2$. But then, $\exp^\nu$ is not injective in any neighborhood of $\ell \mathbf{v}$, and hence by the inverse function theorem, $d\left( \exp^\nu \right)|_{\ell \mathbf{v}}$ cannot have full rank. Consequently, $q = \exp^\nu(\ell \mathbf{v})$ is a focal point of $N$ along $\gamma$. Since $\gamma$ is an $N$-segment, it follows from \autoref{prop:uniquenessOfGeodesicBeforeCutPoint} that $q \in \mathrm{Cu}(N)$. Thus, $\overline{\mathrm{Se}(N)} \subset \mathrm{Cu}(N)$, completing the proof.
\end{proof}

\subsection{The Topology of the Complement of Cut Locus} Let us consider the subset 
\[\mathcal{I}(N) = \left\{ t\mathbf{v} \;\middle|\; 0\le t < \rho(\mathbf{v}), \mathbf{v} \in S(\nu) \right\} \subset \nu,\]
and denote $\hat{\mathcal{I}}(N) = \mathcal{I}(N) \cap \hat{\nu}$. It is clear that $\mathcal{I}(N) \cap \widetilde{\mathrm{Cu}}(N) = \emptyset$, and $\exp^\nu(\mathcal{I}(N)) \cap \mathrm{Cu}(N) = \emptyset$. Observe that $\mathcal{I}(N)$ is \emph{star-shaped}, i.e., for any $\mathbf{v} \in \mathcal{I}(N)$ we have $t\mathbf{v} \in \mathcal{I}(N)$ for $0 \le t \le 1$.

\begin{thm}\label{thm:normalExponetnialDiffeo}
    Let $N$ be a closed submanifold of a forward complete Finsler manifold $(M, F)$, and additionally (\hyperref[eq:hypothesisH]{H}) holds.  Then, the normal exponential map $\exp^\nu$ induces 
    \begin{itemize}
        \item a $C^1$-diffeomorphism of $\mathcal{I}(N)$ onto $M \setminus \mathrm{Cu}(N)$, and
        \item a $C^\infty$ diffeomorphism of $\hat{\mathcal{I}}(N)$ onto $M \setminus (N\cup \mathrm{Cu}(N))$.
    \end{itemize} 
\end{thm}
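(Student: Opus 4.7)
The plan is to verify the theorem via the global inverse function theorem: I will show that $\exp^\nu$ restricts to a continuous bijection from $\mathcal{I}(N)$ onto $M \setminus \mathrm{Cu}(N)$, and that its differential is non-degenerate at every point. The smooth case on $\hat{\mathcal{I}}(N)$ then follows by restriction, since $\exp^\nu|_{\hat{\nu}}$ is already known to be $C^\infty$.

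For the bijectivity, first observe that for $\mathbf{u} = t\mathbf{v} \in \mathcal{I}(N)$ (with $\mathbf{v} \in S(\nu)$ and $0 \le t < \rho(\mathbf{v})$), the curve $\gamma_{\mathbf{v}}|_{[0,t]}$ is an $N$-segment ending at $\exp^\nu(\mathbf{u})$ that extends to a genuine $N$-segment on $[0, t']$ for any $t < t' < \rho(\mathbf{v})$; hence \autoref{prop:cutLocusAllSegmentsFail} rules out $\exp^\nu(\mathbf{u}) \in \mathrm{Cu}(N)$, and therefore $\exp^\nu(\mathcal{I}(N)) \subset M \setminus \mathrm{Cu}(N)$. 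Conversely, for $q \in M \setminus \mathrm{Cu}(N)$, \autoref{prop:existenceOfNSegements} yields an $N$-segment $\gamma_{\mathbf{v}}|_{[0,\ell]}$ ending at $q$ with $\mathbf{v} \in S(\nu)$; if one had $\ell = \rho(\mathbf{v})$, then $q = \exp^\nu(\rho(\mathbf{v})\mathbf{v}) \in \mathrm{Cu}(N)$, so in fact $\ell < \rho(\mathbf{v})$ and $\ell \mathbf{v} \in \mathcal{I}(N)$. Injectivity is then immediate: by \autoref{prop:separtingSetContainedInCutLocus}, $q \notin \mathrm{Cu}(N)$ forces $q \notin \mathrm{Se}(N)$, so $q$ has a unique $N$-segment and a unique preimage in $\mathcal{I}(N)$. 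The refinement $\hat{\mathcal{I}}(N) \leftrightarrow M \setminus (N \cup \mathrm{Cu}(N))$ is routine, since $\exp^\nu(t\mathbf{v}) \in N$ forces $t = d(N, \exp^\nu(t\mathbf{v})) = 0$.

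To upgrade this bijection to a diffeomorphism, one checks non-degeneracy of $d(\exp^\nu)$ at each point. For $\mathbf{u} = t\mathbf{v} \in \hat{\mathcal{I}}(N)$ with $\mathbf{v} \in S(\nu)$, \autoref{cor:cutTimeLessThanFocalTime} gives $t < \rho(\mathbf{v}) \le \lambda(\mathbf{v})$, so $\mathbf{u}$ is not a tangent focal point; consequently $d(\exp^\nu)|_{\mathbf{u}}$ is injective, and since $\dim \hat{\nu} = \dim M$, it is a linear isomorphism. The inverse function theorem then promotes $\exp^\nu|_{\hat{\mathcal{I}}(N)}$ to a global $C^\infty$-diffeomorphism onto $M \setminus (N \cup \mathrm{Cu}(N))$. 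For the $C^1$-statement, at points $0_p$ of the zero section one uses that $\exp: TM \to M$ is $C^1$ globally and that $d(\exp^\nu)|_{0_p}$ is an isomorphism; this follows from its acting as the canonical inclusion $T_p N \hookrightarrow T_p M$ on the horizontal summand and as the natural identification on the vertical $\nu_p$-summand, after which the $C^1$-inverse function theorem completes the argument.

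The principal technical obstacle lies at the zero section: $\nu$ is only a topological submanifold of $TM$ there, and $\exp$ drops from $C^\infty$ to $C^1$, which is why the first assertion cannot be strengthened to $C^\infty$ in general. Verifying that $d(\exp^\nu)|_{0_p}$ remains a bijective linear map requires unpacking the cone-bundle structure at the vertex; away from the zero section, the decisive input is the already-established inequality $\rho \le \lambda$ from \autoref{cor:cutTimeLessThanFocalTime}, which guarantees we are strictly inside the focal cut-off throughout $\hat{\mathcal{I}}(N)$.
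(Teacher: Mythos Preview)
Your proposal is correct and follows essentially the same route as the paper: establish bijectivity of $\exp^\nu|_{\mathcal{I}(N)}$ onto $M\setminus\mathrm{Cu}(N)$ via uniqueness of $N$-segments (\autoref{prop:separtingSetContainedInCutLocus}), then invoke non-degeneracy of the differential on $\hat{\mathcal{I}}(N)$ (from $\rho\le\lambda$, equivalently \autoref{lemma:beyondFocalPoint}) and the inverse function theorem to upgrade to a $C^\infty$-diffeomorphism, with the $C^1$ claim handled by the $C^1$-regularity of $\exp$ at the zero section. Your treatment is in fact slightly more explicit than the paper's on two points---verifying that the image lands in $M\setminus\mathrm{Cu}(N)$ and flagging the cone-bundle subtlety at the zero section---but the architecture is identical.
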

\begin{proof}
    Let $q \in M \setminus \left( N \cup \mathrm{Cu}(N) \right)$. Then, it follows from \autoref{prop:existenceOfNSegements} and \autoref{prop:separtingSetContainedInCutLocus} that, there exists a unique $N$-segment $\gamma : [0, \ell] \rightarrow M$ joining $N$ to $q = \gamma(\ell)$, given by $\gamma(t) = \exp^\nu(t \mathbf{v})$, where $\ell = d(N, q)$ and $\mathbf{v} = \dot \gamma(0) \in S(\nu)$. In particular, we have a map $\Phi: M \setminus \mathrm{Cu}(N) \rightarrow \mathcal{I}(N)$ defined as follows:
    \[\Phi(q) = \begin{cases}
        \ell \mathbf{v}, & q\not\in N \cup \mathrm{Cu}(N), \text{ and } q = \exp^\nu(\ell \mathbf{v})\\
        0 \in T_q N, & q \in N
    \end{cases}\]
    By construction, $\Phi$ is the required inverse map to $\exp^\nu$. We show that $\Phi$ is $C^1$ on $M \setminus \mathrm{Cu}(N)$, and is $C^\infty$ when restricted to $M\setminus (N \cup \mathrm{Cu}(N))$.

    Let $q \not \in N \cup \mathrm{Cu}(N)$, and $\mathbf{u} = \ell \mathbf{v} = \Phi(q) \in \hat{\mathcal{I}}(N)$. It follows from \autoref{lemma:beyondFocalPoint} that $q$ is not a focal point of $N$ along the $N$-geodesic $\gamma(t) = \exp^\nu(t \mathbf{v})$. In particular, $d\left( \exp^\nu \right)|_{\mathbf{u}}$ is nondegenerate, and hence, it is a local diffeomorphism. By the implicit function theorem, $\Phi$ is then $C^\infty$ in a neighborhood of $q$. Thus, $\Phi$ is $C^\infty$ on $M \setminus \left( N\cup\mathrm{Cu}(N) \right)$. Since $\exp^\nu$ is $C^1$ on the $0$ section of $\nu$, same argument as above gives that $\Phi$ is $C^1$ on $M \setminus \mathrm{Cu}(N)$. This completes the proof.
\end{proof}

As a consequence, we now have the following.
\begin{thm}\label{thm:deformation}
    Suppose $N$ is as in \autoref{thm:normalExponetnialDiffeo}. Then,
    \begin{enumerate}
        \item\label{thm:deformation:1} $M\setminus \mathrm{Cu}(N)$ strongly deformation retracts onto $N$, and
        \item\label{thm:deformation:2} $M \setminus N$ strongly deformation retracts onto $\mathrm{Cu}(N)$.
    \end{enumerate}  
\end{thm}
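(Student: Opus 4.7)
The plan is to leverage \autoref{thm:normalExponetnialDiffeo}, which provides a $C^1$ identification of the tangent-level open region $\mathcal{I}(N) \subset \nu$ with $M \setminus \mathrm{Cu}(N)$, together with the continuity of the cut time $\rho$ established in \autoref{thm:rhoContinuous}.

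For part (\ref{thm:deformation:1}), I would note that $\mathcal{I}(N)$ is star-shaped fiberwise with respect to the zero section, so the straight-line contraction
\[
\widetilde{H} : [0,1] \times \mathcal{I}(N) \longrightarrow \mathcal{I}(N), \qquad \widetilde{H}(s, \mathbf{v}) = (1-s)\mathbf{v},
\]
is a strong deformation retraction of $\mathcal{I}(N)$ onto the zero section of $\nu|_N$, which we identify with $N$. Transporting this via the homeomorphism $\exp^\nu : \mathcal{I}(N) \to M \setminus \mathrm{Cu}(N)$ (obtained from \autoref{thm:normalExponetnialDiffeo}) and its inverse $\Phi$, we set $H_s = \exp^\nu \circ \widetilde{H}_s \circ \Phi$, which is the required strong deformation retraction onto $N$.

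For part (\ref{thm:deformation:2}), the idea is to push each point $q \in M \setminus (N \cup \mathrm{Cu}(N))$ outward along its unique $N$-segment until it reaches the cut locus. Explicitly, for such a $q$, \autoref{prop:uniquenessOfGeodesicBeforeCutPoint} and \autoref{thm:normalExponetnialDiffeo} give a unique $(\mathbf{v}, \ell) \in S(\nu) \times (0, \rho(\mathbf{v}))$ with $q = \exp^\nu(\ell \mathbf{v})$. Define
\[
K : [0,1] \times (M \setminus N) \longrightarrow M \setminus N, \qquad K(s, q) = \exp^\nu\!\bigl( ((1-s)\ell + s\, \rho(\mathbf{v}))\, \mathbf{v} \bigr),
\]
and extend by $K(s, q) = q$ for $q \in \mathrm{Cu}(N)$. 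Since $(1-s)\ell + s\rho(\mathbf{v}) < \rho(\mathbf{v})$ for $s < 1$, the image stays in $M \setminus (N \cup \mathrm{Cu}(N))$ during the homotopy, while $K(1, q) = \exp^\nu(\rho(\mathbf{v})\mathbf{v}) \in \mathrm{Cu}(N)$; the identity on $\mathrm{Cu}(N)$ is manifestly preserved, so $K$ is a candidate strong deformation retraction onto $\mathrm{Cu}(N)$.

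The main obstacle is continuity of $K$ at points of $\mathrm{Cu}(N)$, since the decomposition $q \mapsto (\mathbf{v}, \ell)$ is only defined off the cut locus. Take $q_0 \in \mathrm{Cu}(N)$ and a sequence $q_i \to q_0$ in $M \setminus (N \cup \mathrm{Cu}(N))$ with associated $(\mathbf{v}_i, \ell_i)$. Compactness of $S(\nu)$ from \autoref{rmk:unitConeIsCompact} (using that $N$ is closed) lets us extract a convergent subsequence $\mathbf{v}_i \to \mathbf{v}_0 \in S(\nu)$, and $\ell_i = d(N, q_i) \to d(N, q_0) =: \ell_0$ with $\exp^\nu(\ell_0 \mathbf{v}_0) = q_0$; since $q_0 \in \mathrm{Cu}(N)$ and $\gamma_{\mathbf{v}_0}|_{[0,\ell_0]}$ is an $N$-segment ending there, \autoref{prop:cutLocusAllSegmentsFail} forces $\ell_0 = \rho(\mathbf{v}_0)$. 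Combining this with $\rho(\mathbf{v}_i) \to \rho(\mathbf{v}_0)$ from \autoref{thm:rhoContinuous} yields
\[
K(s, q_i) \longrightarrow \exp^\nu\!\bigl((1-s)\rho(\mathbf{v}_0) + s\rho(\mathbf{v}_0)\bigr)\mathbf{v}_0 = \exp^\nu(\rho(\mathbf{v}_0)\mathbf{v}_0) = q_0,
\]
uniformly in $s$, and because this limit is independent of the subsequence we get $K(s, q_i) \to q_0 = K(s, q_0)$ for the full sequence. Continuity off $\mathrm{Cu}(N)$ is immediate from \autoref{thm:normalExponetnialDiffeo} and \autoref{thm:rhoContinuous}, so $K$ is continuous everywhere, completing the proof.
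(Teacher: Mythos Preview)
Your argument is correct and follows essentially the same route as the paper: both parts transport the obvious homotopies on $\mathcal{I}(N)$ (linear contraction to the zero section, respectively linear push to the boundary $\widetilde{\mathrm{Cu}}(N)$) through the identification $\exp^\nu : \mathcal{I}(N) \xrightarrow{\sim} M \setminus \mathrm{Cu}(N)$ from \autoref{thm:normalExponetnialDiffeo}, invoking \autoref{thm:rhoContinuous} for continuity. In fact you supply more detail than the paper does, which simply asserts continuity of the second homotopy; your sequential argument at points of $\mathrm{Cu}(N)$ (extracting limits in $S(\nu)$, identifying $\ell_0 = \rho(\mathbf{v}_0)$ via \autoref{prop:cutLocusAllSegmentsFail}, and using subsequence-independence of the limit) fills that gap cleanly.
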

\begin{proof}
    Consider the linear homotopy $L_s : TM \rightarrow  TM$ given by $L_s(\mathbf{v}) = (1 - s)\mathbf{v}$, for $0 \le s \le 1$. By construction, $\mathcal{I}(N)$ is star-shaped, and thus, $L_s$ restricts to a homotopy of $\nu$ as well. Indeed, 
    \[L_0|_{\mathcal{I}(N)} = \mathrm{Id}_{\mathcal{I}(N)}, \quad L_1|_{\mathcal{I}(N)} = \pi|_{\mathcal{I}(N)},\]
    where $\pi : \nu \rightarrow N$ is the projection. Next, we define the map $h : I \times (M \setminus \mathrm{Cu}(N)) \rightarrow (M \setminus \mathrm{Cu}(N))$ by
    \[h(s, q) \coloneqq \exp^\nu \circ L_s \circ \left( \exp^\nu \right)^{-1}(q).\]
    It follows from \autoref{thm:normalExponetnialDiffeo} that $h$ is a well-defined homotopy. Clearly, $h_0 = h(0, \_)$ is the identity on $M \setminus \mathrm{Cu}(N)$, whereas $h_1 = h(1, \_)$ is contained in $N$. Furthermore, $h_s = h(s, \_)$ is the identity on $N$ for all $0 \le s \le 1$, and $h_1 \circ \iota_N = \mathrm{Id}_N$, where $\iota_N : N \hookrightarrow M$ is the inclusion. Thus, $M \setminus \mathrm{Cu}(N)$ strongly deformation retracts onto $N$, which proves (\hyperref[thm:deformation:1]{1}).
    
    In order to prove (\hyperref[thm:deformation:2]{2}) consider the homotopy $H : I \times (M \setminus N) \rightarrow M \setminus N$ defined as 
    \[H(s, q) \coloneqq
    \begin{cases}
        \exp^\nu\left[ \left( s \rho\left( \frac{\mathbf{v}}{F(\mathbf{v})} \right) + (1 - s) F(\mathbf{v}) \right) \frac{\mathbf{v}}{F(\mathbf{v})} \right], & q \in M \setminus \left( N \cup \mathrm{Cu}(N) \right), 
        \\ &\mathbf{v} \coloneqq \left( \exp^\nu \right)^{-1}(q) 
        \\[1em]
        q, & q \in \mathrm{Cu}(N).
    \end{cases}\]
    Continuity of $H$ follows from \autoref{thm:rhoContinuous} and \autoref{thm:normalExponetnialDiffeo}. Clearly, $H_0 = H(0, \_)$ is identity on $M\setminus N$, whereas $H_1 = H(1, \_)$ is contained in $\mathrm{Cu}(N)$. Furthermore, for all $0 \le s \le 1$, we have $H_s = H(s,\_)$ is identity on $\mathrm{Cu}(N)$, and $H_1 \circ \iota_{\mathrm{Cu}(N)} = \mathrm{Id}|_{\mathrm{Cu}(N)}$, where $\iota_{\mathrm{Cu}(N)} : \mathrm{Cu}(N) \hookrightarrow M$ is the inclusion.  Thus, $M \setminus N$ strongly deformation retracts onto $\mathrm{Cu}(N)$.
\end{proof}

\begin{obs}\label{obs:compactnessAndTangentCutLocus}
    Suppose $N$ is a closed submanifold of $M$. We make a few easy observations. 
    \begin{enumerate}
        \item\label{obs:compactness:MIsImage} $M = \exp^\nu \left( \mathcal{I}(N) \cup \widetilde{\mathrm{Cu}}(N) \right)$.
        
        \item\label{obs:compactness:MCompactImpliesRhoFinite} If $M$ is compact, then we can bound $\rho$ by the number $\sup_{q \in M} d(N, q) < \infty$. The converse may not be true. As an example, consider the infinite cylinder $M = S^1 \times \mathbb{R}$  with the standard metric, and $N = \left\{ 1 \right\} \times \mathbb{R}$ be the infinite line. Then, $\rho = \frac{1}{2}$ everywhere.

        \item\label{obs:compactness:NCompactRhoFiniteIMpliesMCompact} If $N$ is compact, then finiteness of $\rho$ implies the compactness of $M$. Indeed, consider the unit normal disc bundle of $N$ 
        \[D(\nu) \coloneqq \left\{ t\mathbf{v} \;\middle|\; 0 \le t \le 1, \; \mathbf{v}\in S(\nu) \right\},\]
        which is compact as $N$ is so. Clearly, $D(\nu) = \mathcal{I}(N) \cup \widetilde{\mathrm{Cu}}(N)$, and hence by (\hyperref[obs:compactness:MIsImage]{1}), $M$ is compact.
        
        \item\label{obs:compactness:MIsCompactIfLambdaFinite} If $\lambda < \infty$ and $N$ is compact, then $M$ is compact by (\hyperref[obs:compactness:NCompactRhoFiniteIMpliesMCompact]{3}), since $\rho \le \lambda$ by \autoref{cor:cutTimeLessThanFocalTime}. The converse may not be true. Consider $N = S^1\times \left\{ 1 \right\}$ in $M = S^1 \times S^1$ with the flat metric, which admits no focal locus, and thus $\lambda = \infty$.
    \end{enumerate}
\end{obs}

\subsection{Distance Function from \texorpdfstring{$N$}{N}}
\begin{thm}\label{thm:distanceSquareDiff}
    Let $N$ be a closed submanifold, and (\hyperref[eq:hypothesisH]{H}) holds. Let $f: M \rightarrow \mathbb{R}$ be the distance squared function $f(q) = d(N, q)^2$. Then,
    \begin{enumerate}
        \item $f$ is $C^1$ precisely on $M \setminus \mathrm{Se}(N)$, and $f$ is $C^\infty$ on $M \setminus (N \cup \mathrm{Cu}(N))$.
        \item For $q \not \in \mathrm{Se}(N)$ and $X \in T_q M$, we have $df(X) = 2\ell g_{\dot\gamma(\ell)}(\dot\gamma(\ell), X)$, where $\gamma : [0, \ell] \rightarrow  M$ is the unique $N$-segment joining $N$ to $q = \gamma(\ell)$ and $\ell = d(N, q)$.
        \item For $q \in \mathrm{Se}(N)$, $f$ fails to be differentiable precisely along those $X \in T_q M$ for which there exists an $N$-segment $\gamma : [0,\ell] \rightarrow M$ joining $N$ to $q$ with $\dot\gamma(\ell) = X$.
    \end{enumerate}
\end{thm}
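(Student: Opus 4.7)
The plan is to partition $M$ into the four strata $M \setminus (N \cup \mathrm{Cu}(N))$, $N$, $\mathrm{Cu}(N) \setminus \mathrm{Se}(N)$, and $\mathrm{Se}(N)$, and analyze $f = d(N, \cdot)^2$ on each. The two engines are \autoref{thm:normalExponetnialDiffeo} (which trivializes the analysis off the cut locus) and the first variation formula for the restricted length functional $L|_{\mathcal{P}_N}$ (whose boundary term at $t=0$ vanishes by the defining normality $\dot\gamma(0) \in \nu_{\gamma(0)}$, so the endpoint on $N$ need not be constrained). Local Lipschitz continuity of $f$, a direct consequence of $|d(N,q_1)-d(N,q_2)| \le \max\{d(q_1,q_2), d(q_2,q_1)\}$, will be used to upgrade pointwise data to $C^1$ regularity across the focal singularities.

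On $M \setminus (N \cup \mathrm{Cu}(N))$, writing $\Phi = (\exp^\nu)^{-1}$ for the smooth inverse supplied by \autoref{thm:normalExponetnialDiffeo}, the identity $f = F^2 \circ \Phi$ shows $f \in C^\infty$. Since $\exp^\nu$ is only $C^1$ across the zero section, the same identity still gives $f \in C^1$ on $M \setminus \mathrm{Cu}(N)$, with $df = 0$ on $N$. For the formula in (2), take $X \in T_q M$ and a curve $c$ with $\dot c(0) = X$, set $\ell(s)\mathbf{v}(s) := \Phi(c(s))$, and consider the $N$-geodesic variation $\gamma_s(t) := \exp^\nu(t\mathbf{v}(s))$ with $\gamma_s(\ell(s)) = c(s)$. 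Applying the first variation of length, the boundary term at $t = 0$ vanishes by normality, so $\tfrac{d}{ds}|_{s=0} L(\gamma_s) = g_{\dot\gamma(\ell)}(\dot\gamma(\ell), X)$. Since $L(\gamma_s) = \ell(s) = d(N, c(s))$, the chain rule delivers $df_q(X) = 2\ell\, g_{\dot\gamma(\ell)}(\dot\gamma(\ell), X)$.

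The delicate step is the $C^1$ extension across $q_0 \in \mathrm{Cu}(N) \setminus \mathrm{Se}(N)$, where $\exp^\nu$ fails to be locally invertible near $\ell_0 \mathbf{v}_0$ (focal point, \autoref{thm:cutPointClassification}), so implicit-function-theorem reasoning is unavailable. The plan is to show the candidate differential extends continuously. Given any sequence $q_n \to q_0$ with $q_n \notin \mathrm{Cu}(N)$, the unique $N$-segments $\gamma_n : [0, \ell_n] \to M$ to $q_n$ yield initial data $(\mathbf{v}_n, \ell_n) \in S(\nu) \times \mathbb{R}$; by \autoref{rmk:unitConeIsCompact} and continuity of $\rho$ (\autoref{thm:rhoContinuous}) a subsequence converges to some $(\mathbf{v}^*, \ell_0)$, and the limit $N$-geodesic is an $N$-segment to $q_0$; since $q_0 \notin \mathrm{Se}(N)$, uniqueness forces $\mathbf{v}^* = \mathbf{v}_0$ and the whole sequence converges. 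Consequently $q \mapsto 2\ell(q)\, g_{\dot\gamma(\ell(q))}(\dot\gamma(\ell(q)), \cdot)$ is a continuous section of $T^*M$ on $M \setminus \mathrm{Se}(N)$. Combined with Lipschitz continuity of $f$ and its $C^1$-ness on the open dense subset $M \setminus \mathrm{Cu}(N)$, a standard argument (integrating $df$ along smooth curves within a chart and using the Lebesgue theorem for Lipschitz functions to transfer continuity of the extended differential into Fréchet differentiability) upgrades $f$ to $C^1$ on $M \setminus \mathrm{Se}(N)$ with the asserted formula.

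For claim (3), let $q \in \mathrm{Se}(N)$ admit distinct $N$-segments $\gamma_1, \gamma_2$ with end velocities $\mathbf{w}_i := \dot\gamma_i(\ell)$, necessarily satisfying $\mathbf{w}_1 \neq \mathbf{w}_2$ by \autoref{prop:geodesicsIntersectTransversally}. To witness non-differentiability in the direction $X = \mathbf{w}_1$, test $f$ along $c(s) := \gamma_1(\ell + s)$. For $s < 0$, $\gamma_1|_{[0,\ell+s]}$ remains an $N$-segment (\autoref{prop:distanceMinimizingNormal}), so $f(c(s)) = (\ell+s)^2$ and the left derivative is exactly $2\ell$. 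For $s > 0$, the broken curve that concatenates $\gamma_2$ with $\gamma_1|_{[\ell, \ell+s]}$ bounds $d(N, c(s))$ from above, and first variation based at $\gamma_2$ gives $d(N, c(s)) \le \ell + g_{\mathbf{w}_2}(\mathbf{w}_2, \mathbf{w}_1)s + o(s)$. The strict Finsler Cauchy--Schwarz inequality $g_{\mathbf{w}_2}(\mathbf{w}_2, \mathbf{w}_1) < F(\mathbf{w}_2)F(\mathbf{w}_1) = 1$ (strict because $\mathbf{w}_1$ is not a non-negative multiple of $\mathbf{w}_2$) forces the right derivative of $f$ to be strictly less than $2\ell$, precluding two-sided differentiability in direction $\mathbf{w}_1$. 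The converse, that $X$ not of this form are not exceptional, follows by matching upper bounds from each $\gamma_i$-variation against compactness-based lower bounds as in paragraph three. The principal obstacle will be the $C^1$ extension across the focal stratum, since one has to replace the inverse function theorem by a density/continuity argument that hinges critically on uniqueness of the $N$-segment at non-separating cut points.
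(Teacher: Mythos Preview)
Your argument is correct, and differs from the paper's in one meaningful respect. The paper invokes a single ``geometric first variation'' lemma (\autoref{lemma:geometricFirstVariation}, taken from \cite{SaTa16}) that directly computes the directional derivative of $d(N,\cdot)$ at \emph{every} $q\notin\mathrm{Se}(N)$---including focal cut points---by taking an arbitrary sequence $q_i\to q$, choosing $N$-segments $\gamma_i$ to $q_i$, and using compactness of $S(\nu)$ together with uniqueness of the $N$-segment at $q$ to force $\gamma_i\to\gamma$; the limit of difference quotients is then read off. Your route instead establishes the formula first on the open set $M\setminus\mathrm{Cu}(N)$ via the inverse $(\exp^\nu)^{-1}$ and first variation, proves continuity of the candidate $1$-form $q\mapsto 2\ell\,g_{\dot\gamma(\ell)}(\dot\gamma(\ell),\cdot)$ across the focal stratum by the same compactness/uniqueness mechanism, and then promotes to $C^1$ by a Lipschitz/weak-gradient argument. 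Both reach the same destination; the paper's approach is more self-contained (one lemma handles all of $M\setminus\mathrm{Se}(N)$ at once), while yours separates the smooth computation from the extension step and makes the role of uniqueness at non-separating cut points very transparent.

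One small point to tighten: your extension step needs $\mathrm{Cu}(N)$ to have Lebesgue measure zero, not merely that $M\setminus\mathrm{Cu}(N)$ is ``open dense''. This is what allows you to identify the $L^\infty$ weak gradient of the Lipschitz function $f$ with your continuous candidate $\omega$ almost everywhere, after which the conclusion $f\in C^1$ with $df=\omega$ is indeed standard (mollify, or integrate along segments). The measure-zero claim is true---$\widetilde{\mathrm{Cu}}(N)$ is the graph of the continuous function $\rho$ over the $(n-1)$-dimensional $S(\nu)$ and $\exp^\nu$ is smooth on $\hat\nu$---but it should be stated rather than hidden behind ``open dense''. For part~(3), your explicit computation with the strict Finsler Cauchy--Schwarz inequality $g_{\mathbf w_2}(\mathbf w_2,\mathbf w_1)<1$ is sharper than the paper's one-line deferral to \cite{SaTa16}; the converse direction (differentiability along non-terminal directions) is only sketched in both treatments, and would benefit from a line saying that the one-sided directional derivatives from above and from below, each obtained as a $\min$/$\max$ over the (compact) set of terminal velocities via the same variation argument, coincide when $X$ is not itself a terminal velocity.
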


The proof is essentially the same as the one given in \cite{SaTa16}, where the authors proved that the distance function from an arbitrary closed set $N$ in a Finsler manifold is $C^1$ precisely in the complement of $N \cup \mathrm{Cu}(N)$. Recall the following.

\begin{lemma}\cite[Proposition 2.2]{SaTa16}\label{lemma:geometricFirstVariation}
    Consider the following setup.
    \begin{itemize}
        \item $\gamma : [0,\ell] \rightarrow M$ is an $N$-segment, joining $p = \gamma(0)$ to $q = \gamma(\ell)$, where $\ell = d(N, q)$.
        \item $q_i \rightarrow q$ is a converging sequence in $M$.
        \item $\gamma_i : [0,\ell_i] \rightarrow M$ is an $N$-segment, joining $p_i = \gamma_i(0)$ to $q_i = \gamma_i(\ell_i)$, where $\ell_i = d(N, q_i)$, and $\mathbf{v}_i = \dot\gamma_i(0)$.
        \item $\sigma_i : [0, T_i] \rightarrow M$ is an $N$-segment, joining $q = \sigma_i(0)$ to $q_i = \sigma_i(T_i)$, where $T_i = d(q, q_i)$, and $\mathbf{w}_i = \dot\sigma_i(0)$.
    \end{itemize}
    Suppose, the limit $\mathbf{w} = \lim_i \mathbf{w}_i$ exists, and $\gamma_i$ converges uniformly to $\gamma$. Then, the limit 
    \[\lim_{i\rightarrow \infty} \frac{d(N, q_i) - d(N, q)}{d(q, q_i)}\]
    exists, and equals $g_{\dot\gamma(\ell)}\left( \dot\gamma(\ell), \mathbf{w} \right)$.
\end{lemma}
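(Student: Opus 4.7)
The plan is to establish matching $\limsup$ and $\liminf$ bounds for $(\ell_i - \ell)/T_i$ by two applications of the first variation formula for length, applied to two different variations: the upper bound uses a variation of $\gamma$ whose terminal curve traces $\sigma_i$ forward, and the lower bound uses a variation of $\gamma_i$ whose terminal curve traces $\sigma_i$ backward. In both cases the minimality of $\gamma$ (resp.\ $\gamma_i$) among curves from $N$ turns the comparison into a one-sided inequality on arc length, while the first variation formula identifies the leading-order term with the sought inner product.

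\emph{Upper bound.} Since $\sigma_i$ is the unit-speed geodesic issuing from $q$ with initial velocity $\mathbf{w}_i$, take a smooth vector field $W_i$ along $\gamma$ with $W_i(0) = 0$ and $W_i(\ell) = \mathbf{w}_i$, chosen (using $\mathbf{w}_i \to \mathbf{w}$) so that the $W_i$ converge $C^2$ to a limit field $W$ along $\gamma$. Set
\[
\Lambda^i(s,t) \;=\; \exp_{\gamma(t)}\!\bigl(s\, W_i(t)\bigr),\qquad (s,t)\in[0,T_i]\times[0,\ell].
\]
Then $\Lambda^i(0,t) = \gamma(t)$, $\Lambda^i(s,0) = p$, and $\Lambda^i(s,\ell) = \exp_q(s\mathbf{w}_i) = \sigma_i(s)$, so $\Lambda^i_{T_i}$ joins $p \in N$ to $q_i$, giving $\ell_i \le L(\Lambda^i_{T_i})$. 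The first variation formula (with the boundary term at $p$ vanishing because $W_i(0) = 0$, and the interior term vanishing because $\gamma$ is a geodesic) yields $\frac{d}{ds}|_{s=0} L(\Lambda^i_s) = g_{\dot\gamma(\ell)}(\dot\gamma(\ell), \mathbf{w}_i)$. The $C^2$-uniform control on the $W_i$ and smoothness of $F$ on $\widehat{TM}$ give a Taylor expansion uniform in $i$:
\[
L(\Lambda^i_{T_i}) \;=\; \ell + T_i\, g_{\dot\gamma(\ell)}(\dot\gamma(\ell), \mathbf{w}_i) + O(T_i^2),
\]
whence $\limsup_i (\ell_i-\ell)/T_i \le g_{\dot\gamma(\ell)}(\dot\gamma(\ell), \mathbf{w})$.

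\emph{Lower bound.} Now let $\Psi^i$ be any smooth variation of $\gamma_i$ with $\Psi^i(s,0) = p_i$ and $\Psi^i(s,\ell_i) = \sigma_i(T_i-s)$; the terminal curve is smooth even though the reverse of a Finsler geodesic need not be a geodesic. The terminal variation vector is $-\dot\sigma_i(T_i)$. Since $\Psi^i_{T_i}$ joins $p_i \in N$ to $q$, minimality gives $\ell \le L(\Psi^i_{T_i})$, and an analogous Taylor analysis produces
\[
(\ell_i-\ell)/T_i \;\ge\; g_{\dot\gamma_i(\ell_i)}(\dot\gamma_i(\ell_i),\, \dot\sigma_i(T_i)) + O(T_i).
\]
The uniform convergence $\gamma_i \to \gamma$, combined with compactness of the unit tangent sphere at $p$, forces $\mathbf{v}_i \to \dot\gamma(0)$, and continuity of the geodesic flow then gives $\dot\gamma_i(\ell_i) \to \dot\gamma(\ell)$. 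On the $\sigma_i$ side, $T_i \to 0$ and $\mathbf{w}_i \to \mathbf{w}$ jointly yield $\dot\sigma_i(T_i) \to \mathbf{w}$ in $TM$. Taking $\liminf$ recovers the matching lower bound $g_{\dot\gamma(\ell)}(\dot\gamma(\ell),\mathbf{w})$, and the two sides of the sandwich then force existence of the limit with the claimed value.

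\emph{Main obstacle.} The delicate point is making the $O(T_i^2)$ remainders in both Taylor expansions uniform in $i$. This requires uniform $C^2$-control on the variations $\Lambda^i$ and $\Psi^i$, which in turn rests on uniform control of $\gamma_i$, $\sigma_i$, the Chern connection, and the relevant parallel-transport data along these curves. The uniform convergence $\gamma_i \to \gamma$ with $\ell_i \to \ell$, plus $\mathbf{w}_i \to \mathbf{w}$ and smoothness of $F$ on $\widehat{TM}$, give this control on a compact neighborhood of $\gamma$. A second subtlety is that the limit vectors $\dot\gamma_i(\ell_i)$ and $\dot\sigma_i(T_i)$ live in varying tangent spaces $T_{q_i}M$; this is handled by interpreting convergence inside the ambient smooth manifold $TM$, using that $q_i \to q$.
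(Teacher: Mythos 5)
Your sandwich argument is sound, but note that the paper does not actually prove \autoref{lemma:geometricFirstVariation}: it is quoted verbatim from \cite[Prop.~2.2]{SaTa16}, so there is no internal proof to compare against. Your route --- an upper bound for $\limsup_i(\ell_i-\ell)/T_i$ from a variation of $\gamma$ whose endpoint slides forward along $\sigma_i$, and a matching lower bound from a variation of $\gamma_i$ whose endpoint slides backward along $\sigma_i$, with minimality of $\gamma$ resp.\ $\gamma_i$ converting each comparison into a one-sided inequality and the first variation formula (interior term and initial boundary term vanishing) supplying the leading coefficient --- is the standard way such ``first variation of the distance'' statements are established, and is in the same spirit as the argument in \cite{SaTa16}. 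What the citation buys the paper is the greater generality of that reference (the result there is for an arbitrary closed subset $N$) together with a careful treatment of exactly the technical points you flag; what your write-up buys is a self-contained proof whose only inputs are the first variation formula, the geodesic ODE, and continuity of the geodesic flow, which is arguably more transparent in the submanifold setting actually used in this paper.

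Two details need more care than ``choose a smooth $W_i$'' and ``any smooth variation $\Psi^i$''. First, the Finsler exponential map is only $C^1$ at the zero section, and $\Lambda^i(s,t)=\exp_{\gamma(t)}(sW_i(t))$ passes through the zero section wherever $sW_i(t)=0$ (in particular along $s=0$ and near $t=0$, since $W_i(0)=0$); joint smoothness in $(s,t)$, which your uniform second-order Taylor remainder requires, is therefore not automatic from smoothness of $F$ on $\widehat{TM}$. The standard repair is to take $W_i(t)=\chi(t)P_i(t)$ with $P_i$ a nonvanishing smooth field along $\gamma$ satisfying $P_i(\ell)=\mathbf{w}_i$ and $\chi$ smooth with $\chi(0)=0$, $\chi(\ell)=1$, and to define $\Lambda^i(s,t)$ as the time-$s\chi(t)$ point of the geodesic with initial velocity $P_i(t)$; this coincides with $\exp_{\gamma(t)}(s\chi(t)P_i(t))$ but is visibly smooth in $(s,t)$ because the geodesic flow is smooth on $\widehat{TM}$. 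Second, for the lower bound an arbitrary smooth variation with prescribed endpoint curve does not give a remainder uniform in $i$: you must construct $\Psi^i$ with uniformly bounded second $s$-derivatives, e.g.\ by interpolating in a fixed chart around $q$ between $\gamma_i$ and the reversed $\sigma_i$ with a fixed cut-off; this is possible precisely because uniform convergence $\gamma_i\to\gamma$ plus the geodesic equation upgrades to $C^\infty$-convergence (which is also the correct justification for $\dot\gamma_i(\ell_i)\to\dot\gamma(\ell)$ --- the compactness you invoke should be that of the unit sphere bundle over a compact neighbourhood of $p$, since the $\mathbf{v}_i$ live at the varying points $p_i$), and because the $\sigma_i$ have uniformly bounded data near $q$. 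With these constructions spelled out, your argument is complete and correct.
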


\begin{proof}[Proof of \autoref{thm:distanceSquareDiff}]
    We have from \autoref{thm:normalExponetnialDiffeo}, $\exp^\nu$ restricts to a $C^\infty$-diffeomorphism $\hat{\mathcal{I}}(N) \rightarrow M \setminus(N \cup \mathrm{Cu}(N))$. For each $q \in M \setminus (N \cup \mathrm{Cu}(N))$, we have the unique $N$-segment $\gamma_{\mathbf{v}}$ joining $N$ to $q$, where $\mathbf{v}=(\exp^\nu)^{-1}(q)$, and furthermore $d(N, q) = L(\gamma_\mathbf{v})=F(\mathbf{v})$. Thus, we have the identity 
    \[f|_{M\setminus (N \cup \mathrm{Cu}(N))} = \left( F \circ (\exp^\nu)^{-1} \right)^2,\]
    which proves the smoothness of $f$ on the complement of $N \cup \mathrm{Cu}(N)$.

    Now, let $q \not\in \mathrm{Se}(N)$. Then, there exists a unique $N$-segment $\gamma : [0, \ell] \rightarrow  M$ joining $p = \gamma(0) \in N$ to $q = \gamma(\ell)$, where $\ell = d(N, q)$ and $\mathbf{v} = \dot\gamma(0)\in S(\nu)$. Clearly, $f(q) = \ell^2 = (L(\gamma))^2$. Let $X \in T_q M$, and consider a minimizer $\sigma : [0, \epsilon) \rightarrow M$ such that $\sigma(0) = q, \dot\sigma(0) = X$. Let $s_i \rightarrow 0$ be a sequence in the domain of $\sigma$. Suppose, $\gamma_i : [0, \ell_i] \rightarrow M$ are $N$-segments, joining $p_i = \gamma_i(0) \in N$ to $q_i = \sigma(s_i) = \gamma_i(\ell_i)$, where $\ell_i = d(N, q_i) = L(\gamma_i)$. As $s_i \rightarrow 0$, we have $q_i = \sigma(s_i) \rightarrow  \sigma(0) = q$, and $\ell_i \rightarrow \ell$. Set $\mathbf{v}_i = \dot\gamma_i(0) \in S(\nu)$. It follows from \autoref{prop:existenceOfNSegements}, a subsequence $\gamma_{i} \rightarrow \gamma$, and $(p_i, \mathbf{v}_i) \rightarrow (p, \mathbf{v})$. In particular, $\dot\gamma(\ell) = \lim_i \dot\gamma_i(\ell_i)$. Then, it follows from \autoref{lemma:geometricFirstVariation} that 
    \begin{align*}
        \lim_{i\rightarrow \infty} \frac{f(\sigma(s_i)) - f(\sigma(0))}{s_i}
        &= \lim_{i\rightarrow \infty} \frac{d(N, q_i)^2 - d(N,q)^2}{s_i} \\
        &= \lim_{i \rightarrow \infty} \left( d(N, q_i) + d(N, q) \right) \frac{d(N, q_i) - d(N,q) }{d(q_i, q)}  \\
        &=  \left( d(N, q) + d(N, q) \right) g_{\dot \gamma(\ell)} \left( \dot \gamma(\ell), X \right) \\
        &= 2 \ell g_{\dot \gamma(\ell)} \left( \dot \gamma(\ell), X \right).
    \end{align*}
    Thus, $f$ is differentiable from the left at $q$. The backward version of \autoref{lemma:geometricFirstVariation} \cite[Proposition 2.3]{SaTa16} then gives that $f$ is differentiable from the right as well. Consequently, $f$ is differentiable at $q$ with differential $df(X) = 2 \ell g_{\dot \gamma(\ell)}\left( \dot \gamma(\ell), X \right)$. A careful examination of the proof of \autoref{lemma:geometricFirstVariation} (and its backward version) from \cite{SaTa16}, reveals that $f$ fails to be differentiable at points of $\mathrm{Se}(N)$ precisely along those directions which is the terminal velocity of some $N$-segment. This concludes the proof.
\end{proof}

\subsection{Existence of \texorpdfstring{$N$}{N}-Geodesic Loops}\label{subsec:NGeodesicLoop}
In this subsection, we generalize a result following \cite{Innami2012}. The proof is in the same vein, with suitable modifications needed to deal with the asymmetric nature of Finsler distance.

\begin{thm}\label{thm:atLeastTwoNGeodesics}
    Let $N$ be a closed submanifold of a forward complete Finsler manifold $(M, F)$. Suppose (\hyperref[eq:hypothesisH]{H}) holds, that is, either (a) $N$ is compact, or (b) $F$ is also backward complete. Then one of the following is true.
    \begin{itemize}
        \item There are points in $\mathrm{Cu}(N)$ which are also focal points of $N$.
        \item For each $q \in M$, there exist at least two $N$-geodesics joining $N$ to $q$, one of which crosses the cut locus $\mathrm{Cu}(N)$. For $q\in N$, we allow the constant curve at $q$ to be considered as an $N$-geodesic.
    \end{itemize}
\end{thm}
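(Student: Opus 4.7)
My plan is to prove the contrapositive: assuming no point of $\mathrm{Cu}(N)$ is a focal point of $N$, I will show the second alternative holds. Under this hypothesis, \autoref{thm:cutPointClassification} forces $\mathrm{Cu}(N) = \mathrm{Se}(N)$, which I will use throughout. Fix $q \in M$. The case $q \in \mathrm{Cu}(N)$ is immediate: by the definition of $\mathrm{Se}(N)$ there exist two distinct $N$-segments ending at $q$, and both have their endpoint on $\mathrm{Cu}(N)$, which counts as crossing. The substantive case is $q \in M \setminus \mathrm{Cu}(N)$; this also subsumes $q \in N$, where the constant curve is the first $N$-geodesic and the task is to produce a nontrivial second one.

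For such $q$, let $\gamma:[0,\ell] \to M$ denote the unique $N$-segment from $p \in N$ to $q$ supplied by \autoref{prop:uniquenessOfGeodesicBeforeCutPoint} (take $\ell=0$ and any $\mathbf{v} \in S(\nu_q)$ if $q \in N$), set $\mathbf{v} = \dot\gamma(0)$, and extend via $\tilde\gamma(t) = \exp^\nu(t\mathbf{v})$ to the cut point $q_1 = \tilde\gamma(\rho(\mathbf{v})) \in \mathrm{Cu}(N) = \mathrm{Se}(N)$. Choose a distinct $N$-segment $\eta:[0,\rho(\mathbf{v})] \to M$ from $p^\prime \in N$ to $q_1$ with initial direction $\mathbf{w} \neq \mathbf{v}$. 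Because $q_1$ is not a focal point of $N$, $\exp^\nu$ is a local diffeomorphism near $\rho(\mathbf{v})\mathbf{w} \in \hat\nu$, mapping an open neighborhood $U$ diffeomorphically onto an open neighborhood $V \ni q_1$ in $M$. For $q_s := \tilde\gamma(s)$ with $s < \rho(\mathbf{v})$ close enough that $q_s \in V$, the pre-image of $q_s$ in $U$ is a vector $t^\prime \mathbf{w}^\prime$ yielding an $N$-geodesic to $q_s$ of length $t^\prime$. Since $q_s \notin \mathrm{Cu}(N)$ admits only $\gamma|_{[0,s]}$ as its $N$-segment and that has direction $\mathbf{v} \neq \mathbf{w}^\prime$, the new $N$-geodesic cannot itself be an $N$-segment, forcing $t^\prime > \rho(\mathbf{w}^\prime)$; it therefore crosses $\mathrm{Cu}(N)$.

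To propagate this to $q = q_\ell$, I plan a clopen-continuation argument. Let
\[
T := \{\, s \in [\ell, \rho(\mathbf{v})] \mid \text{a second $N$-geodesic to $q_s$ crossing $\mathrm{Cu}(N)$ exists}\,\},
\]
so the local step supplies a neighborhood of $\rho(\mathbf{v})$ in $T$. I aim to show $T = [\ell,\rho(\mathbf{v})]$. Closedness: if $s_n \in T$ with $s_n \to s$, extract a subsequential limit of the corresponding crossing geodesics using compactness of $S(\nu)$ (\autoref{rmk:unitConeIsCompact}), continuity of $\rho$ (\autoref{thm:rhoContinuous}), and continuity of $\exp^\nu$; the limit $\tilde\eta$ ends at $q_s$ and its length $L \geq \rho$ at the limiting initial direction, with equality ruled out because otherwise $\tilde\eta$ would be a second $N$-segment to $q_s$, placing $q_s \in \mathrm{Se}(N) = \mathrm{Cu}(N)$ and contradicting the uniqueness of the $N$-segment $\tilde\gamma|_{[0,s]}$ for $s < \rho(\mathbf{v})$. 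Openness: at $s \in T$ with second geodesic $\tilde\eta$, if the endpoint vector $L\dot{\tilde\eta}(0)$ is not a focal vector, the inverse function theorem produces second geodesics for all nearby $q_{s^\prime}$. By connectedness of $[\ell,\rho(\mathbf{v})]$, $T$ being clopen forces $T = [\ell,\rho(\mathbf{v})]$, giving the desired second $N$-geodesic at $s=\ell$.

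The main obstacle is precisely the openness step: the endpoint of $\tilde\eta$ at $q_s$ may well be a focal point of $N$ along $\tilde\eta$, since focal points past $\mathrm{Cu}(N)$ are not excluded by our hypothesis. My plan to circumvent this is to always select, at each $s \in T$, a crossing second $N$-geodesic of minimum length among all such; an index-form analysis in the spirit of \autoref{lemma:beyondFocalPoint} shows such a length-minimizer can have no focal point of $N$ in its interior, and if its endpoint were focal, variation along an $N$-Jacobi field vanishing at the endpoint would deform the minimizer into a strictly shorter crossing $N$-geodesic ending at $q_s$, contradicting minimality. With this minimizing choice, the openness step goes through, and the continuation concludes the proof.
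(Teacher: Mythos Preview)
Your strategy diverges from the paper's: rather than a continuation along the extended $N$-segment, the paper fixes $q$, minimizes $\mathcal{M}_q(x) = d(N,x) + d(x,q)$ over the (compact) set $\mathrm{Cu}(N)$, and applies \autoref{lemma:twoNSegments} at the minimizer $x_0$. That lemma shows (assuming $x_0$ is not focal) that exactly two $N$-segments reach $x_0$ and that one of them, followed by the minimizer from $x_0$ to $q$, is smooth at $x_0$ and hence an $N$-geodesic crossing $\mathrm{Cu}(N)$. The crucial point is that this crossing geodesic is a concatenation of two honest distance minimizers, so no focal point lying \emph{beyond} $\mathrm{Cu}(N)$ ever has to be controlled.

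Your argument has a genuine gap at exactly that point. In the openness step you acknowledge that the endpoint $q_s$ of the chosen second geodesic $\tilde\eta$ may be a focal point of $N$ along $\tilde\eta$, and you propose to exclude this by taking $\tilde\eta$ of minimal length among crossing $N$-geodesics and invoking an index-form argument. This does not work. If $J$ is an $N$-Jacobi field along $\tilde\eta$ with $J(L)=0$, the associated $N$-geodesic variation $\Lambda_\sigma$ consists of $N$-geodesics ending at $\Lambda_\sigma(L)$; the condition $J(L)=0$ only forces $\Lambda_\sigma(L)=q_s$ \emph{to first order}, not $\Lambda_\sigma(L)=q_s$ for $\sigma\neq 0$. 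Conversely, the construction in \autoref{lemma:beyondFocalPoint} (negative index form) yields strictly shorter \emph{curves} from $N$ to $q_s$, not $N$-geodesics. Neither route manufactures a shorter crossing $N$-geodesic to $q_s$, so minimality of $\tilde\eta$ gives no contradiction and you cannot conclude that its endpoint is non-focal. The same objection defeats your claim about interior focal points. Without the inverse function theorem at $L\,\dot{\tilde\eta}(0)$, openness of $T$ is unproved and the continuation stalls. A secondary issue: if $\rho(\mathbf{v})=\infty$ your construction of the cut point $q_1$ never gets started, whereas the paper's minimization over $\mathrm{Cu}(N)$ is insensitive to the behaviour of the particular $N$-segment through $q$.
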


Note that, among the hypotheses of the above theorem, both (a) and (b) are true if $M$ is compact, whereas (c) follows if $F$ (which is forward complete) is assumed to be reversible. As an application of \autoref{thm:atLeastTwoNGeodesics}, in the absence of any focal cut points, we can get an $N$-geodesic joining $N$ to some point $q \in N$, although the terminal velocity need not be in the normal cone $\nu_q$. 

\begin{defn}\label{defn:NGeodesicLoop}
    An $N$-geodesic $\gamma : [0, \ell] \rightarrow M$ is called an \emph{$N$-geodesic loop} if $\gamma$ connects $p = \gamma(0) \in N$ to $q = \gamma(\ell) \in N$, and $\dot \gamma(\ell) \in \nu_q(N)$.
\end{defn}

Such $N$-geodesics were considered by Omori \cite{Omo68} in the Riemannian setting. We have the following result, originally due to Klingenberg \cite[Lemma 1]{Kli59} in the Riemannian case for a point, and later generalized in \cite{Xu2015}.

\begin{thm}\label{thm:NGeodesicLoop}
    Let $N$ be a closed submanifold of a forward complete Finsler manifolds $(M, F)$, where $F$ is \emph{reversible}. Suppose $x_0 \in \mathrm{Cu}(N)$ is a global minima of the function $x \mapsto d(N,x)$ on $\mathrm{Cu}(N)$. Then one of the following holds.
    \begin{itemize}
        \item $x_0$ is a focal point of $N$.
        \item There exists a unique $N$-geodesic loop, which crosses $\mathrm{Cu}(N)$ at $x_0$, at the midpoint.
    \end{itemize}
    If $x_0$ is only a local minima, then the above holds, provided $\mathrm{Cu}(N)$ is compact.
\end{thm}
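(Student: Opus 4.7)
The plan is to assume $x_0$ is not a focal point of $N$, and to construct the promised $N$-geodesic loop by feeding a carefully chosen auxiliary point $q \in N$ into \autoref{lemma:twoNGeodesicsLocalMinima}. Reversibility of $F$ will be used essentially to make the Finsler distance symmetric and to swap forward and backward $N$-segments.

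\textbf{Step 1 (Choice of $q$).} Using forward completeness of $(M, F)$, reversibility, and the closedness of $N$, I would produce $q \in N$ realizing $d(x_0, q) = d(x_0, N) = d(N, x_0)$; explicitly, $q$ is the endpoint of a reversed $N$-segment starting at $x_0$, whose existence is \autoref{prop:existenceOfNSegements} applied to the reverse Finsler metric, which coincides with $F$ by reversibility.

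\textbf{Step 2 ($x_0$ is a local minimum of $\mathcal{M}_q|_{\mathrm{Cu}(N)}$).} For any $x \in \mathrm{Cu}(N)$ sufficiently close to $x_0$, the hypothesis gives $d(N, x) \ge d(N, x_0)$. Since $q \in N$, reversibility yields $d(x, q) \ge d(x, N) = d(N, x) \ge d(N, x_0)$. Summing,
\[\mathcal{M}_q(x) = d(N, x) + d(x, q) \;\ge\; 2\, d(N, x_0) \;=\; d(N, x_0) + d(x_0, q) \;=\; \mathcal{M}_q(x_0),\]
so $x_0$ is a local (indeed, locally global) minimum of $\mathcal{M}_q|_{\mathrm{Cu}(N)}$.

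\textbf{Step 3 (Applying the lemma).} Since $x_0$ is not a focal point of $N$, \autoref{lemma:twoNGeodesicsLocalMinima} produces an $N$-geodesic $\gamma$ joining $N$ to $q$ and crossing $\mathrm{Cu}(N)$ at $x_0$. Tracing through \autoref{lemma:twoNSegments}, $\gamma$ decomposes as a concatenation $\gamma_1 \cup \eta$, where $\gamma_1$ is an $N$-segment of length $d(N, x_0)$ from $N$ to $x_0$, and $\eta$ is a minimizer of length $d(x_0, q)$ from $x_0$ to $q$. By our choice of $q$, these two lengths coincide, so $x_0$ sits at the midpoint of $\gamma$; moreover $\gamma(0) \in N$ and $\gamma$ terminates at $q \in N$. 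Reparametrizing by arc length delivers the desired unit-speed $N$-geodesic loop crossing $\mathrm{Cu}(N)$ exactly at its midpoint.

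\textbf{Main obstacle.} The only delicate step is Step~2, and that is precisely where reversibility is indispensable: without a symmetric distance one cannot compare $d(x, q)$ with $d(N, x)$ for $q \in N$, and the auxiliary functional $\mathcal{M}_q$ need not inherit a local minimum at $x_0$ from the hypothesis on $d(N, \cdot)|_{\mathrm{Cu}(N)}$. Once Step~2 is in place, the rest is a direct assembly of the previously established structural results.
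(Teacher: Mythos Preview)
Your proposal is correct and follows essentially the same route as the paper: choose $q\in N$ with $d(x_0,q)=d(N,x_0)$, use reversibility and the local-minimum hypothesis to show $x_0$ is a local minimum of $\mathcal{M}_q|_{\mathrm{Cu}(N)}$, then invoke \autoref{lemma:twoNGeodesicsLocalMinima} to obtain the $N$-geodesic loop through $x_0$ at its midpoint. Your inequality in Step~2, $d(x,q)\ge d(x,N)=d(N,x)$, is in fact a slightly cleaner version of the triangle-inequality computation the paper uses.
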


Note that in the above theorem, condition (\hyperref[eq:hypothesisH]{H}) is satisfied as reversibility of $F$ implies that $F$ is backward complete as well. In order to prove these two theorems, for any $q \in M \setminus \mathrm{Cu}(N)$, let us consider the function 
\begin{align*}
    \mathcal{M} = \mathcal{M}_q : M &\rightarrow [0,\infty) \\
    x &\mapsto d(N, x) + d(x, q)
\end{align*}
Suppose there exists some $x_0 \in \mathrm{Cu}(N)$ so that $\mathcal{M}|_{\mathrm{Cu}(N)}$ attains a global minima at $x_0$, i.e.,
\[\mathcal{M}(x_0) = \min_{x \in \mathrm{Cu}(N)} \mathcal{M}(x).\]
We have the following lemma, which is quite interesting in itself.

\begin{lemma}\label{lemma:twoNSegments}
    With the above notation, further assume that $x_0$ is not a focal point of $N$. We then have the following dichotomy.
    \begin{enumerate}
        \item \label{lemma:twoNSegments:1} $q \not\in \mathrm{Cu}(x_0)$. Then, there exist exactly two $N$-segments, say, $\gamma_1,\gamma_2$, joining $N$ to $x_0$, and a unique minimizer, say $\eta$, joining $x_0$ to $q$. Furthermore, exactly one of $\gamma_1 \cup \eta$ and $\gamma_2 \cup \eta$ is an $N$-geodesic joining $N$ to $q$, and intersecting $\mathrm{Cu}(N)$ exactly at $x_0$.
        
        \item \label{lemma:twoNSegments:2} $q \in \mathrm{Cu}(x_0)$. Then, there exist exactly two $N$-segments, say, $\gamma_1, \gamma_2$, joining $N$ to $x_0$, and at most two minimizers, say, $\eta_1, \eta_2$, joining $x_0$ to $q$. Furthermore, if $\eta_1, \eta_2$ are distinct, then among the four curves $\left\{ \gamma_i \cup \eta_j, i,j = 1,2 \right\}$, exactly one pair
        \[
            \left\{ \gamma_1 \cup \eta_1, \gamma_2 \cup \eta_2 \right\} \text{ or } \left\{ \gamma_1 \cup \eta_2, \gamma_2 \cup \eta_1 \right\}
        \]
        are smooth curves, joining $N$ to $q$, crossing $\mathrm{Cu}(N)$ exactly at $x_0$. The other two curves are broken at $x_0$. If $\eta_1 = \eta_2 = \eta$, then exactly one of $\left\{ \gamma_1 \cup \eta, \gamma_2 \cup \eta \right\}$ is an $N$-geodesic joining $N$ to $q$, crossing $\mathrm{Cu}(N)$ exactly at $x_0$, the other one being broken at $x_0$.
        
        \item[(2')] \label{lemma:twoNSegments:2prime} For $q \in \mathrm{Cu}(x_0)$, if we furthermore assume that $F$ is a reversible Finsler metric, then exactly two minimizers exist from $x_0$ to $q$, and the rest of (\hyperref[lemma:twoNSegments:2]{2}) holds.
    \end{enumerate}
\end{lemma}
\begin{proof}
    \textbf{Case (\hyperref[lemma:twoNSegments:1]{1}):} Suppose $q \not \in \mathrm{Cu}(x_0)$. Then, it follows from \autoref{prop:uniquenessOfGeodesicBeforeCutPoint} that there exists a unique minimizer $\eta : [0, a] \rightarrow M$ joining $x_0$ to $q$, where $a = d(x_0, q)$. Let us show that $\eta$ only intersects $\mathrm{Cu}(N)$ at the initial point $x_0$. Indeed, for any $0 < t \le a$ observe that any curve joining $N$ to $\eta(t)$ and passing through $x_0 \in \mathrm{Cu}(N)$ cannot be an $N$-segment, and thus
    \[d(N, \eta(t)) < d(N, x_0) + d(x_0, \eta(t)).\]
    Then we have,
    \begin{align*}
        \mathcal{M}(\eta(t)) &= d(N, \eta(t)) + d(\eta(t), q) \\
        &< d(N, x_0) + \underbrace{d(x_0, \eta(t)) + d(\eta(t), q)}_{\text{$x_0, \eta(t), q$ lies on the minimizer $\eta$}} \\
        &= d(N, x_0) + d(x_0, q) \\
        &= \mathcal{M}(x_0).
    \end{align*}
    As $x_0$ is a global minima of $\mathcal{M}|_{\mathrm{Cu}(N)}$, we must have $\eta(t) \not \in \mathrm{Cu}(N)$ for $0 < t \le a$. 
    
    Now, since $x_0 \in \mathrm{Cu}(N)$ is not a focal point of $N$, we have $x_0 \in \mathrm{Se}(N)$, and hence there are at least $2$ unique $N$-segments joining $N$ to $x_0$. If possible, assume that we have at least three $N$-segments joining $N$ to $x_0$, say, $\gamma_1, \gamma_2, \gamma_3 : [0, \ell] \rightarrow M$, where $\ell = d(N, x_0)$ (\autoref{fig:threeGeodesics}). Since they are distinct, their terminal velocities $\dot\gamma_i(\ell)$ are distinct as well. Consequently, \emph{at most} one of $\gamma_i \cup \eta$ can be smooth at $x_0$. Without loss of generality, assume that both $\gamma_1 \cup \eta$ and $\gamma_2 \cup \eta$ are broken at $x_0$.
    \begin{figure}[H]
        \centering
    \def\svgwidth{0.5\columnwidth}
    \import{./figures/}{reflects.pdf_tex}

        \caption{Case (\hyperref[lemma:twoNSegments:1]{1})}
        \label{fig:threeGeodesics}
    \end{figure}

    Since $\gamma_1$ is in particular a minimizer, $\gamma_1|_{[t, \ell]}$ is a minimizer for any $0 < t < \ell$ (\autoref{prop:globalMinimizerIsLocalMinimizer}). Then, for $y = \gamma_1(t)$, it follows from the brokenness of $\gamma_1 \cup \eta$ that 
    \[d(y, q) < d(y, x_0) + d(x_0, q).\]
    Hence,
    \[\mathcal{M}(y) = d(N, y) + d(y, q) < d(N,y) + d(y, x_0) + d(x_0, q) = d(N, x_0) + d(x_0, q) = \mathcal{M}(x_0).\]
    Now, let $\zeta : [0,b] \rightarrow M$ be a minimizer joining $y$ to $q$, where $b = d(y, q)$. We claim that $\zeta$ does not intersect $\mathrm{Cu}(N)$. If not, we must have some $0 < s < b$ so that $x_0^\prime = \zeta(s) \in \mathrm{Cu}(N)$ (see \autoref{fig:reflects-case-1}).
    \begin{figure}[H]
        \centering
    \def\svgwidth{0.5\columnwidth}
    \import{./figures/}{reflects-case-1.pdf_tex}

        \caption{Case (\hyperref[lemma:twoNSegments:1]{1}) : Minimizer $\zeta$ joining $y$ to $q$, crossing $\mathrm{Cu}(N)$}
        \label{fig:reflects-case-1}
    \end{figure}
    We have
    \begin{align*}
        \mathcal{M}(x_0) \le \mathcal{M}(x_0^\prime) 
        &= d(N, x_0^\prime) + d(x_0^\prime, q) \\
        &\le d(N, y) + \underbrace{d(y, x_0^\prime) + d(x_0^\prime,q)}_{\text{$y,x_0^\prime,q$ are on the minimizer $\zeta$}} \\
        &= d(N,y) + d(y,q) = \mathcal{M}(y) < \mathcal{M}(x_0),
    \end{align*}
    which is a contradiction. Thus, $\zeta$ does not intersect $\mathrm{Cu}(N)$. Similarly, for any $y = \gamma_2(t)$, we have that any minimizer joining $y$ to $q$ does not intersect $\mathrm{Cu}(N)$ (\autoref{fig:reflects-case-2}).

    \begin{figure}[H]
        \centering
    \def\svgwidth{0.5\columnwidth}
    \import{./figures/}{reflects-case-2.pdf_tex}

        \caption{Case (\hyperref[lemma:twoNSegments:1]{1}) : Minimizer between $y_{ij}$ and $q$}
        \label{fig:reflects-case-2}
    \end{figure}
    Pick some sequence $t_j \rightarrow \ell$ and set $y_{ij} = \gamma_i(t_j), i=1,2$. Then, $y_{ij} \rightarrow \gamma_i(\ell) = x_0$ for $i=1,2$. Consider minimizers $\zeta_{ij} : [0, a_{ij}] \rightarrow M$ joining $y_{ij}$ to $q$, where $a_{ij} = d(y_{ij}, q)$. Clearly, $a_{ij} = d(y_{ij}, q) \rightarrow d(x_0, q) = a$, and hence by \autoref{lemma:convergentSubsequenceNSegment}, a subsequence $\zeta_{ij}$ converges uniformly to a minimizer joining $x_0$ to $q$, which is necessarily $\eta$ by uniqueness. Reparametrizing, we have the curves $\hat{\zeta}_{ij} : [0,a] \rightarrow M$ given by $\hat{\zeta}_{ij}(t) = \zeta_{ij}\left( \frac{a_{ij}}{a} t \right)$, which converges uniformly to $\eta$. Note that, the family $\left\{ \hat{\zeta}_{ij} \right\}_j$ is uniformly Lipschitz continuous for $i =1, 2$. Indeed, as $a_{ij} \rightarrow a$, we have a number $a_0$ such that $\sup_j \frac{a_{ij}}{a} \le a_0$. As $\zeta_{ij}$ is a minimizer, for $0 \le t_1 \le t_2 \le a$ we get $d(\hat{\zeta}_{ij}(t_1), \hat{\zeta}_{ij}(t_2)) = \frac{a_{ij}}{a}(t_2 - t_1) \le a_0 |t_2 - t_1|$, and thus, $a_0$ is a uniform Lipschitz constant for both the families $\left\{ \hat{\zeta}_{ij} \right\}_j$. As $\zeta_{ij}$ does not intersect $\mathrm{Cu}(N)$, it follows from \autoref{thm:normalExponetnialDiffeo} that there exists unique lift $\tilde{\zeta}_{ij} \coloneqq \left( \exp^\nu|_{\mathcal{I}(N)} \right)^{-1} \circ \hat{\zeta}_{ij} : [0, a] \rightarrow \mathcal{I}(N)$ joining $\tilde{\mathbf{y}}_{ij} \coloneqq \tilde{\zeta}_{ij}(0)$ to $\tilde{\mathbf{q}} \coloneqq  \tilde{\zeta}_{ij}(a)$. Clearly, $\exp^\nu(\tilde{\mathbf{y}}_{ij}) = y_{ij}$ and $\exp^\nu(\tilde{\mathbf{q}}) = q$. In particular, $\tilde{\mathbf{y}}_{ij} = t_j \dot\gamma_i(0) \rightarrow \ell \dot\gamma_i(0)$. We claim that for $i = 1, 2$, a subsequence of $\tilde{\zeta}_{ij}$ converges to some $\tilde{\zeta}_i$, which must then join $\mathbf{w}_i \coloneqq  \ell \dot\gamma_i(0) \in \left( \exp^\nu \right)^{-1}(x_0)$ to $\tilde{\mathbf{q}}$. \vspace{.6em}

    Since $x_0$ is not a focal point of $N$, for $i=1,2$ we have that $\exp^\nu$ is a diffeomorphism restricted to an open neighborhood, say, $\mathcal{W}_i \subset \hat{\nu}$ of $\mathbf{w}_i$. We pick $\mathcal{W}_i$ sufficiently small so that $\mathcal{W}_1 \cap \mathcal{W}_2 = \emptyset$. Consider the open set $\mathcal{U} \coloneqq \hat{\mathcal{I}}(N) \cup \mathcal{W}_1 \cup \mathcal{W}_2 \subset \hat{\nu}$, and define the map $\hat{F} : T\mathcal{U} \rightarrow [0, \infty)$ by 
    \[\hat{F}_{\mathbf{v}}(\mathbf{x}) \coloneqq F_{\exp^\nu(\mathbf{v})} \left( d_{\mathbf{v}} \left( \exp^\nu|_{\hat{\nu}} \right) (\mathbf{x}) \right), \quad \mathbf{x} \in T_{\mathbf{v}} \  \mathcal{U}, \; \mathbf{v} \in \mathcal{U}.\]
    Since $\exp^\nu|_{\mathcal{U}}$ is an immersion, one can easily check that $\hat{F}$ is a Finsler metric on $\mathcal{U}$, induced from $F$ by $\exp^\nu|_{\mathcal{U}}$. Furthermore, $\tilde{\zeta}_{ij}$ is a minimizer with respect to $\hat{F}$. Indeed, we observe that
    \begin{align*}
        L(\tilde{\zeta}_{ij}) &\ge d\left( \tilde{\zeta}_{ij}(0), \tilde{\zeta}_{ij}(a) \right) \\
        &= \inf \left\{ L(\tau) \;\middle|\; \tau : [0, a] \rightarrow \mathcal{U}, \, \tau(0) = \tilde{\zeta}_{ij}(0), \, \tau(a) = \tilde{\zeta}_{ij}(a) \right\} \\
        &= \inf \left\{ L(\exp^\nu \circ \tau) \;\middle|\; \tau : [0, a] \rightarrow \mathcal{U}, \, \tau(0) = \tilde{\zeta}_{ij}(0), \, \tau(a) = \tilde{\zeta}_{ij}(a) \right\} \\
        &\ge \inf \left\{ L(\upsilon) \;\middle|\; \upsilon : [0, a] \rightarrow M, \, \upsilon(0) = \hat{\zeta}_{ij}(0), \, \upsilon(a) = \hat{\zeta}_{ij}(a) \right\} \\
        &= d\left( \hat{\zeta}_{ij}(0), \hat{\zeta}_{ij}(a) \right) = L(\hat{\zeta}_{ij}) = L(\tilde{\zeta}_{ij}).
    \end{align*}
    As argued earlier, the family $\left\{ \tilde{\zeta}_{ij} \right\}_j$ is uniformly Lipschitz, with Lipschitz constant $a_0$. In order to appeal to the Arzel\'{a}-Ascoli theorem, we need to get a uniform bound for these two families. First, we get a $\delta > 0$ sufficiently small, so that the closed forward ball $K_i \coloneqq \overline{B_+(\mathbf{w}_i, 3\delta)}$ is compact and contained in $\mathcal{W}_i$. Since $L\left( \tilde{\zeta}_{ij}|_{[0,\delta]} \right) = L\left( \hat{\zeta}_{ij}|_{[0,\delta]} \right) = \delta \frac{a_{ij}}{a} \rightarrow \delta$, for $j$ large we have $L\left( \tilde{\zeta}_{ij}|_{[0,\delta]} \right) < 2\delta$. As $\tilde{\zeta}_{ij}(0) \rightarrow \mathbf{w}_i$, arguing similarly as in the proof of \autoref{lemma:convergentSubsequenceNSegment}, it then follows that $\tilde{\zeta}_{ij}|_{[0,\delta]} \subset K_i$ for $j$ large. On the other hand, as $\eta$ does not intersect $\mathrm{Cu}(N)$ except at $\eta(0)$, we have $\eta|_{[\delta, a]} \subset \exp^\nu\left( \mathcal{I}(N) \right)$. Since $\hat{\zeta}_{ij}|_{[\delta, a]} \rightarrow \eta|_{[\delta, a]}$ uniformly, we obtain a relatively compact neighborhood $W$ of $\eta|_{[\delta, a]}$, so that $\hat{\zeta}_j|_{[\delta, a]} \subset W \subset \overline{W} \subset \exp^\nu\left( \mathcal{I}(N) \right)$ for $j$ large. Set $K \coloneqq \left( \exp^\nu|_{\mathcal{I}(N)} \right)^{-1}\left( \overline{W} \right)$. Clearly, $\tilde{\zeta}_{ij}|_{[\delta,a]} = \left( \exp^\nu|_{\mathcal{I}(N)} \right)^{-1} \circ \hat{\zeta}_{ij}|_{[\delta, a]} \subset K$. Thus, the family $\left\{ \tilde{\zeta}_{ij} \right\}_j$ is contained in the compact set $K \cup K_i$, for $i = 1,2$. Then, an application of the (asymmetric) Arzel\`a-Ascoli theorem \cite{CoZi07} gives a subsequence of $\left\{ \tilde{\zeta}_{ij} \right\}_j$ converging uniformly to some $\tilde{\zeta}_i \subset \mathcal{I}(N) \cup \mathcal{W}_i$, which joins $\mathbf{w}_i$ to $\tilde{\mathbf{q}}$, respectively.

    Suppose, $\zeta_i \coloneqq \exp^\nu \circ \ \tilde{\zeta}_i $, which then joins $x_0$ to $q$ for $i = 1,2$. Note that 
    \[\zeta_i(t) = \exp^\nu \left( \lim_j \tilde{\zeta}_{ij}(t) \right) = \lim_j \exp^\nu\left( \tilde{\zeta}_{ij}(t) \right) = \lim_j \hat{\zeta}_{ij}(t) = \eta(t), \quad \text{for } 0 \le t \le a.\]
    We now show that this leads to a contradiction. Set 
    \[s_0 := \inf_{0 \le s \le a} \left\{ s \;\middle|\; \tilde{\zeta}_1|_{[s,a]} = \tilde{\zeta}_2|_{[s,a]} \right\}.\]
    Since $\exp^\nu$ is injective on $\mathcal{I}(N)$ and $\tilde{\zeta}_1(a) = \tilde{q} = \tilde{\zeta}_2(a) \in \mathcal{I}(N)$, we have $s_0 < a$. Again, from the hypothesis, $\exp^\nu$ is a local diffeomorphism on $\mathcal{I}(N) \cup \mathcal{W}_1 \cup \mathcal{W}_2$, and hence in particular, injective in a neighborhood of $\tilde{\zeta}_1(s_0) = \tilde{\zeta}_2(s_0)$. Consequently, by a standard argument using the connectivity of the interval $[0, a]$, we get $s_0 = 0$. But this implies that $\tilde{\zeta}_1 = \tilde{\zeta}_2$ everywhere, which contradicts that $\tilde{\zeta}_1(0) = \ell \dot\gamma_1(0) \ne \ell \dot\gamma_2(0) = \tilde{\zeta}_2(0)$.

    Thus, we conclude that there exist exactly two $N$-segments joining $N$ to $x_0$. Suppose, $\gamma_1, \gamma_2 : [0, \ell] \rightarrow M$ are the $N$-segments. We must have one (and exactly one) of $\gamma_i \cup \eta$ is smooth since otherwise arguing as above we will again reach a contradiction. Without loss of generality, assume that $\gamma_1 \cup \eta$ is smooth at $x_0 = \gamma_1(\ell) = \eta(0)$, i.e., $\dot \gamma_1(\ell) = \dot\eta(0)$. But then it follows from the uniqueness of the solution to the geodesic equation that $\gamma_1 \cup \eta$ is an $N$-geodesic. This proves  (\hyperref[lemma:twoNSegments:1]{1}). \medskip

    \noindent\textbf{Case (\hyperref[lemma:twoNSegments:2]{2}):} In order to prove (\hyperref[lemma:twoNSegments:2]{2}), assume that $q \in \mathrm{Cu}(x_0)$. Say, $\eta : [0, a] \rightarrow M$ is a minimizer joining $x_0$ to $q$, where $a = d(x_0, q)$. We can choose some $0 < a_0 < a$, so that $q_0 = \eta(a_0) \not \in \mathrm{Cu}(x_0)$. Consider the function $\mathcal{M}_0 = \mathcal{M}_{q_0}$ defined as $\mathcal{M}_0(x) \coloneqq d(N, x) + d(x, q_0)$ for $x \in M$. We show that $\mathcal{M}_0(x_0) = \min_{x\in\mathrm{Cu}(N)} \mathcal{M}_0(x)$. If not, suppose there exists some $x_1 \in \mathrm{Cu}(N)$ such that $\mathcal{M}_0(x_1) < \mathcal{M}_0(x_0)$. Note that
    \begin{align*}
        \mathcal{M}(x_1) 
        &= d(N, x_1) + d(x_1, q) \\
        &\le d(N, x_1) + d(x_1, q_0) + d(q_0, q) \\
        &= \mathcal{M}_0(x_1) + d(q_0, q) \\
        &< \mathcal{M}_0(x_0) + d(q_0, q) \\
        &= d(N, x_0) + \underbrace{d(x_0, q_0) + d(q_0, q)}_{\text{$x_0, q_0,q$ are on the minimizer $\eta$}} \\
        &= d(N,x_0) + d(x_0, q) \\
        &= \mathcal{M}(x_0),
    \end{align*}
    which is a contradiction as $\mathcal{M}$ achieves a global minima value on $\mathrm{Cu}(N)$ at $x_0$. Hence, $\mathcal{M}_0$ also achieves a global minima on $\mathrm{Cu}(N)$ at $x_0$. But now we can argue as in (\hyperref[lemma:twoNSegments:1]{1}), for $q_0$ and $\mathcal{M}_0$ instead of $q$ and $\mathcal{M}$ respectively. In particular, we have exactly two $N$-segments joining $N$ to $x_0$, say, $\gamma_1, \gamma_2$.
    
    Now, we have a minimizer joining $x_0$ to $q$. Suppose we have at least three distinct minimizers from $x_0$ to $q$. Then, we have at least one geodesic, say, $\eta$ from $x_0$ to $q$ such that both $\gamma_1 \cup \eta$ and $\gamma_2 \cup \eta$ are broken at $x_0$. Let us pick some $q_0$ on $\eta$ with $q_0 \ne x_0, q$, so that $q_0 \not \in \mathrm{Cu}(x_0)$. Denote $\tilde{\eta}$ as the subsegment of $\eta$ joining $x_0$ to $q_0$, and note that $\gamma_1 \cup \tilde{\eta}$ and $\gamma_2 \cup \tilde{\eta}$ are broken at $x_0$. Arguing similarly as in (\hyperref[lemma:twoNSegments:1]{1}), with $\mathcal{M}$ and $q$ replaced by $\mathcal{M}_{q_0}$ and $q_0$ respectively, we again reach a contradiction. Hence, there exist at most two minimizers, say, $\eta_1, \eta_2$, from $x_0$ to $q$. If $\eta_1, \eta_2$ are distinct, then the same argument gives that two (and hence, exactly two) of the curves $\left\{ \gamma_i \cup \eta_j, \; i, j = 1, 2 \right\}$ are smooth, which in turn gives two $N$-geodesics, both crossing $\mathrm{Cu}(N)$ at $x_0$. The other two curves are broken at $x_0$. If $\eta_1 = \eta_2 = \eta$, then we are in the same situation as (\hyperref[lemma:twoNSegments:1]{1}), and the conclusion follows. This completes the proof of (\hyperref[lemma:twoNSegments:2]{2}).
    \begin{figure}[H]
        \centering
    \def\svgwidth{0.5\columnwidth}
    \import{./figures/}{reflects-case-2_1.pdf_tex}

        \caption{The possible $N$-geodesics joining $N$ to $q$}
        \label{fig:reflects-case-2_1}
    \end{figure}

    \noindent \textbf{Case (\hyperref[lemma:twoNSegments:2prime]{2'}):} Lastly, let us assume that $F$ is a reversible Finsler metric so that the distance is now symmetric. In particular, $q \in \mathrm{Cu}(x_0) \Rightarrow x_0 \in \mathrm{Cu}(q)$. If possible, let $\eta$ be the unique minimizer joining $x_0$ to $q$, and without loss of generality, $\gamma_1 \cup \eta$ be smooth at $x_0$. Then, for any point $y \ne x_0$ on $\gamma_1$, the minimizer joining $y$ to $q$ cannot be contained in $\gamma_1 \cup \eta$, otherwise this would extend the reversed minimizer $\bar{\eta}$, contradicting $x_0 \in \mathrm{Cu}(q)$. Consequently, $d(y, q) < d(y, x_0) + d(x_0, q)$. Since $\gamma_2 \cup \eta$ is broken at $x_0$, for any $y \ne x_0$ on $\gamma_2$, we again have $d(y, q) < d(y, x_0) + d(x_0, q)$. But now we are in the same situation as in case (\hyperref[lemma:twoNSegments:1]{1}), and arguing similarly we reach a contradiction. Thus, $\eta$ cannot be the unique minimizer joining $x_0$ to $q$. Arguing as in (\hyperref[lemma:twoNSegments:2]{2}), we conclude that there are exactly two such minimizers, and this finishes the proof of (\hyperref[lemma:twoNSegments:2prime]{2'}).
\end{proof}

We can now prove \autoref{thm:atLeastTwoNGeodesics}.

\begin{proof}[Proof of \autoref{thm:atLeastTwoNGeodesics}]
    Suppose $\mathrm{Cu}(N)$ has no focal points of $N$ so that we can appeal to \autoref{lemma:twoNSegments}. If $q\in \mathrm{Cu}(N)$, then by \autoref{thm:cutPointClassification}, there exist at least two $N$-geodesics (in fact, $N$-segments) joining $N$ to $q$. Suppose $q \not \in \mathrm{Cu}(N)$, so that we have a (unique) $N$-segment joining $N$ to $q$, which clearly does not intersect $\mathrm{Cu}(N)$. Note that if $q \in N$, we consider the constant curve at $q$ as an $N$-segment.
    
    In order to apply \autoref{lemma:twoNSegments}, we need to show that $\mathcal{M}|_{\mathrm{Cu}(N)}$ attains a global minima at some $x_0 \in \mathrm{Cu}(N)$, where $\mathcal{M} = \mathcal{M} _q$ as defined earlier. Set $r = \inf_{x \in \mathrm{Cu}(N)} \mathcal{M}(x)$. We have infinitely many $x_n \in \mathrm{Cu}(N)$ such that $\lim \mathcal{M}(x_n) = r$. Without loss of generality, we assume $r \le \mathcal{M}(x_n) < r + \epsilon$ for some $\epsilon > 0$. We consider the following cases.
    \begin{enumerate}[\quad(a)]
        \item Suppose $N$ is compact. Consider he set $\overline{B_+(N, r + \epsilon)} \coloneqq \left\{ x \in M \;\middle|\; d(N, x) \le r + \epsilon \right\}$, which is compact as $F$ is forward complete. Set $X = \mathrm{Cu}(N) \cap \overline{B_{+}(N, r + \epsilon)} $. As $\mathrm{Cu}(N)$ is closed (\autoref{thm:separatingSetDense}), we have $X$ is a compact set. Since $d(N, x_n) \le \mathcal{M}(x_n) < r+ \epsilon$, we get $x_n \in X$. Consequently, we have a subsequence $x_{n_k} \rightarrow x_0 \in X \subset \mathrm{Cu}(N)$. Clearly, $\mathcal{M}(x_0) = \lim \mathcal{M}(x_{n_k}) = r$, i.e., $\mathcal{M}|_{\mathrm{Cu}(N)}$ attains a global minima at $x_0$
        \item If $(M, F)$ is backward complete, we then have that the backward closed ball $\overline{B_{-}(q, r + \epsilon)}$ is compact, and then similar argument as in (a) works.
    \end{enumerate}
    In any case, from \autoref{lemma:twoNSegments} we now get at least one (and at most two) $N$-geodesic joining $N$ to $q$, which crosses $\mathrm{Cu}(N)$, and thus distinct from the $N$-segment we already have (\autoref{fig:reflects-case-2_1}). This concludes the claim.
\end{proof}

In order to prove the second part of \autoref{thm:NGeodesicLoop}, following \cite{Xu2015}, we get a generalization of \autoref{lemma:twoNSegments}.
\begin{lemma}\label{lemma:twoNGeodesicsLocalMinima}
    Let $N$ be a closed submanifold of a forward complete $(M, F)$, and condition (\hyperref[eq:hypothesisH]{H}) holds. Let $x_0$ be a \emph{local} minima of the function $\mathcal{M}_q|_{\mathrm{Cu}(N)}$, for some $q \in M \setminus \mathrm{Cu}(N)$. Assume furthermore that $\mathrm{Cu}(N)$ is compact. If $x_0$ is not a focal point of $N$, then $(1)$ and $(2)$ of \autoref{lemma:twoNSegments} remain valid.
\end{lemma}
\begin{proof}
    Let $\eta : [0,a] \rightarrow M$ be any minimizer joining $x_0$ to $q$, where $a = d(x_0, q)$. For any $z = \eta(t)$, we have 
    \[\mathcal{M}_{z}(x_0) = d(N, x_0) + d(x_0, z) = d(N,x_0) + d(x_0, q) - d(z, q) = \mathcal{M}_q(x_0) - d(z, q).\]
    Then, for $x \in \mathrm{Cu}(N)$ sufficiently close to $x_0$ we see,
    \begin{align*}
        \mathcal{M}_{z}(x) &=
        d(N, x) + d(x, z) \\
        &\ge d(N,x) + d(x,q) - d(z, q) \\
        &= \mathcal{M}_q(x) - d(z, q) \\
        &\ge \mathcal{M}_q(x_0) - d(z, q) \\
        &= \mathcal{M}_{z}(x_0).
    \end{align*}
    In particular, $x_0$ is again a local minima of $\mathcal{M}_{z}|_{\mathrm{Cu}(N)}$. We show that for some $z = \eta(t)$, $\mathcal{M}_z|_{\mathrm{Cu}(N)}$ attains a global minima at $x_0$. In fact, it suffices to show that $\mathcal{M}_z|_{\mathrm{Cu}(N)}$ attains its global minima sufficiently close to $x_0$ for some $z$, whence, $x_0$ being a local minima will imply it is a global minima as well.
    
    Fix some $t_j \rightarrow 0$ and denote $q_j = \eta(t_j)$, so that $q_j \rightarrow x_0$. If possible, suppose, for some open neighborhood $U \subset \mathrm{Cu}(N)$ of $x_0$, we have that $\mathcal{M}_{q_j}|_{\mathrm{Cu}(N)}$ attains a global minima, say, $z_j \in \mathrm{Cu}(N) \setminus U$. As $\mathrm{Cu}(N)$ is compact by hypothesis, passing to a subsequence, we assume that $z_j \rightarrow z_0 \in \mathrm{Cu}(N) \setminus U$. Now, for any $y \in \mathrm{Cu}(N)$ we have 
    \[d(N, z_j) + d(z_j, q_j) = \mathcal{M}_{q_j}(z_j) \le \mathcal{M}_{q_j}(y) = d(N, q_j) + d(q_j, y).\]
    Taking $j \rightarrow \infty$ we have, 
    \[d(N, z_0) + d(z_0, x_0) \le d(N, x_0) + d(x_0, y) \Rightarrow \mathcal{M}_{x_0}(z_0) \le \mathcal{M}_{x_0}(y), \quad \forall y \in \mathrm{Cu}(N).\]
    In particular, taking $y = x_0$ we get 
    \begin{align*}
        &\mathcal{M}_{x_0}(z_0) \le \mathcal{M}_{x_0}(x_0) \\
        \Rightarrow & d(N, z_0) + d(z_0, x_0) \le d(N,x_0) + 0 \le d(N,z_0) + d(z_0, x_0) \\
        \Rightarrow & d(N, x_0) = d(N,z_0) + d(z_0, x_0).
    \end{align*}
    In particular, $z_0$ must be on an $N$-segment joining $N$ to $x_0$. But $z_0 \in \mathrm{Cu}(N)$, and hence any $N$-segment connecting $N$ to $z_0$ cannot be an $N$-segment after $z_0$. In particular $z_0 = x_0$, which contradicts $z_0 \not \in U$. Thus, we must have for some $q_{j_0} = \eta(t_{j_0})$, $\mathcal{M}_{q_{j_0}}|_{\mathrm{Cu}(N)}$ attains a global minima at $x_0$. Observe that $q_{j_0} \not \in \mathrm{Cu}(x_0)$, since $\eta|_{[0, t_{j_0}]}$ clearly extends to a minimizer $\eta$. Now, applying \autoref{lemma:twoNSegments} (\hyperref[lemma:twoNSegments:1]{1}) for $\mathcal{M}_{q_{j_0}}$ and $q_{j_0}$, we have exactly two $N$-segment, say, $\gamma_1, \gamma_2$ joining $N$ to $x_0$. Furthermore, without loss of generality, only $\gamma_1 \cup \eta|_{[0, t_{j_0}]}$ is smooth at $x_0$. Clearly, $\gamma_1 \cup \eta$ is then an $N$-geodesic joining $N$ to $q$, crossing $\mathrm{Cu}(N)$ at $x_0$, and $\gamma_2 \cup \eta$ is broken at $x_0$. Let us now consider the two cases.

    \begin{itemize}
        \item \textbf{Case (\hyperref[lemma:twoNSegments:1]{1}):} Suppose $q \not \in \mathrm{Cu}(x_0)$. Then, $\eta$ is the unique minimizer joining $q$ to $x_0$ and the proof is done. 
        \item \textbf{Case (\hyperref[lemma:twoNSegments:2]{2}):} Since for any minimizer $\eta$ joining $q$ to $x_0$, precisely one of $\left\{ \gamma_1 \cup \eta, \gamma_2 \cup \eta \right\}$ can be smooth, it is clear that there are at most two minimizers joining $q$ to $x_0$. The claim is then immediate.
    \end{itemize}
    This concludes the proof.
\end{proof}

We now prove \autoref{thm:NGeodesicLoop}.
\begin{proof}[Proof of \autoref{thm:NGeodesicLoop}]
    Suppose $x_0$ is not a focal point of $N$. Pick some $q \in N$ so that $d(q, x_0) = d(N, x_0)$ (\autoref{prop:existenceOfNSegements}). Since $F$ is reversible, we have \[\mathcal{M}_q(x_0) = d(N,x_0) + d(x_0,q) = d(N,x_0) + d(q,x_0) = 2d(N,x_0).\]
    Then, for $x\in \mathrm{Cu}(N)$, we have
    \begin{align*}
        \mathcal{M}_q(x) = d(N, x) + d(x,q) &= d(N,x) + d(q,x) \\
        &\ge d(N, x) + d(N, x), \quad \text{as $q \in N$} \\
        &= 2d(N,x) \\
        &\ge 2d(N,x_0) = \mathcal{M}_q(x_0),
    \end{align*}
    i.e., $x_0$ is a global minima of $\mathcal{M}_q|_{\mathrm{Cu}(N)}$. By \autoref{lemma:twoNSegments}, we then have precisely two $N$-segments, say, $\gamma_1, \gamma_2$ joining $N$ to $x_0$, and at most two minimizers, say, $\eta_1, \eta_2$ joining $x_0$ to $q$. But reversibility of $F$ implies that the reversed curves $\bar{\eta}_1, \bar{\eta}_2$ are minimizers joining $q$ to $x_0$. As $d(N, x_0) = d(q, x_0)$ by choice, we get $\bar{\eta}_1, \bar{\eta}_2$ are $N$-segments joining $N$ to $x_0$. Consequently, without loss of generality, we must have either $\bar{\eta}_1 = \gamma_1, \bar{\eta}_2 = \gamma_2$, or $\gamma_1 = \bar{\eta}_1 = \bar{\eta}_2$. In any case, we furthermore have that $\gamma_1 \cup \bar{\gamma}_2$ is the unique smooth $N$-geodesic, crossing $\mathrm{Cu}(N)$ at $x_0$, which is the midpoint. This $N$-geodesic starts and terminates in $q \in N$, with initial and terminal velocity in $S(\nu_q)$, i.e, $\gamma$ is an $N$-geodesic loop.

    If $x_0$ is a local minima, then observe that the same argument as above gives $\mathcal{M}_q(x) \ge \mathcal{M}_q(x_0)$ for $x\in\mathrm{Cu}(N)$ sufficiently close to $x_0$. We then conclude the proof by using \autoref{lemma:twoNGeodesicsLocalMinima}.
\end{proof}

The following result can be compared to \cite[Lemma 5.2]{InnItoNagShi19}.
\begin{corollary}\label{cor:sphereTheorem}
    Suppose $(M, F)$ is a reversible Finsler manifold of dimension $n \ge 2$. Let $N = \left\{ p, q \right\} \subset M$ be a two point set. Assume the following.
    \begin{itemize}
        \item $\mathrm{Cu}(N)$ contains no focal point of $N$.
        \item For every $x \in \mathrm{Cu}(N)$ we have $d(N, \mathrm{Cu}(N)) = d(N, x)$.
    \end{itemize}
    Then, $M$ is homeomorphic to $S^n$, and $\mathrm{Cu}(N)$ is diffeomorphic to $S^{n-1}$.
\end{corollary}
\begin{proof}
    Suppose $L_0 = d(N, \mathrm{Cu}(N))$. It follows from \autoref{thm:NGeodesicLoop} that for each point $x \in \mathrm{Cu}(N)$, there exists a unique $N$-geodesic loop, say $\gamma_x$, which crosses $\mathrm{Cu}(N)$ at $x$ at the midpoint. By the hypothesis, $\frac{1}{2} L(\gamma_x) = d(N, x) = d(N, \mathrm{Cu}(N)) = L_0 \Rightarrow L(\gamma_x) = 2L_0$. Let us now consider the following three sets.
    \begin{enumerate}[(i)]
        \item $A_{p,q} = \left\{ x \in \mathrm{Cu}(N) \;\middle|\; \begin{alignedat}{2} & \text{ There is a geodesic of length $2L_0$} \\ & \text{ joining $p$ to $q$ with $x$ as the midpoint }  \end{alignedat}  \right\}$,
        \item $A_{p,p} = \left\{ x \in \mathrm{Cu}(N) \;\middle|\; \begin{alignedat}{2} & \text{ There is a geodesic of length $2L_0$} \\ & \text{ joining $p$ to itself with $x$ as the midpoint}  \end{alignedat}  \right\}$,
        \item $A_{q,q} = \left\{ x \in \mathrm{Cu}(N) \;\middle|\; \begin{alignedat}{2} & \text{ There is a geodesic of length $2L_0$} \\ & \text{ joining $q$ to itself with $x$ as the midpoint}  \end{alignedat}  \right\}$.
    \end{enumerate}

    By the above argument, we have 
    \[\mathrm{Cu}(N) = A_{p,q} \sqcup A_{p,p} \sqcup A_{q,q},\]
    as any such geodesics considered in the definitions of the subsets are in fact $N$-geodesic loops. Since $\mathrm{Cu}(N)$ is closed (\autoref{thm:separatingSetDense}), by an application of \autoref{prop:existenceOfNSegements}, all three subsets are closed in $\mathrm{Cu}(N)$. As $\mathrm{Cu}(N)$ is the disjoint union of three closed subsets, all three subsets are open in $\mathrm{Cu}(N)$ as well. Note that $A_{p, q} \ne \emptyset$, as we have a minimizer joining $p$ to $q$, whose midpoint is then in $\mathrm{Cu}(N)$, and clearly, $d(p, q) = 2L_0$. Since $n = \dim M \ge 2$, we have $M \setminus N$ is connected, which implies that $\mathrm{Cu}(N)$ is connected (\autoref{thm:deformation} (\hyperref[thm:deformation:2]{2})). Hence, we have $\mathrm{Cu}(N) = A_{p,q}$. In other words, for each $x \in \mathrm{Cu}(N)$, there exists a unique minimizer $\gamma_x$ of length $2L_0$, joining $p$ to $q$. We have 
    \[\gamma_x(t) = \exp^\nu(t \mathbf{v}) = \exp_p(t \mathbf{v}), \quad \mathbf{v} = \left( \exp_p \right)^{-1}(x).\]
    Now, every geodesic emanating from $p$ is also an $N$-geodesic, and must cross $\mathrm{Cu}(N)$ at some $x \in \mathrm{Cu}(N)$. In particular, all geodesics emanating from $p$ are minimizers, crossing $\mathrm{Cu}(N)$ at the midpoint at length $L_0$, and terminating at $q$ at length $2L_0$. Thus, $q \in \mathrm{Cu}(p)$. On the other hand, for any $z \in M \setminus \left\{ p, q \right\}$, a minimizer joining $p$ to $z$ is extensible to distance $2L_0$ whence it reaches $q$, and so $z \not \in \mathrm{Cu}(p)$. Thus, $\mathrm{Cu}(p) = \left\{ q \right\}$ (and by reversibility $\mathrm{Cu}(q) = \left\{ p \right\}$ as well). Consequently, $M$ is homeomorphic to the sphere $S^n$, since $\exp _p$ identifies the one point compactification of $\mathcal{I} (p)$ with $M$ (\autoref{obs:compactnessAndTangentCutLocus} (\ref{obs:compactness:MIsImage})). We also have, $\mathrm{Cu}(N) = \left\{ \exp_p(L_0 \mathbf{v}) \;\middle|\; \mathbf{v} \in S(T_p M) \right\}$. As $\exp_p$ is a diffeomorphism before the cut time for $p$, which is the constant $2L_0$, we have $\mathrm{Cu}(N)$ is diffeomorphic to $S^{n-1} \cong S(T_p M)$. This concludes the proof.
\end{proof}

\section*{Acknowledgments}
The authors would like to express their gratitude to S. Basu and J. Itoh for many fruitful discussions and suggestions in the preparation of this article. They are also grateful to the anonymous referee for their valuable suggestions which, in particular, helped in removing the compactness assumption in most of the results of the article. They would like to thank ICTS for the wonderful hosting and a research-friendly environment during the workshop Dualities in Topology and Geometry (code: 1139 ICTS/dta2023/05), where the majority of this work was done. The first author was supported by the NBHM grant no. 0204/1(5)/2022/R\&D-II/5649 and the second author was supported by ICTS postdoctoral fellowship. 

\bibliographystyle{alphaurl}

\end{document}